\documentclass[11pt]{article}
\usepackage[margin=.75in]{geometry}
\usepackage{amsfonts,amsmath,amssymb,amsthm,booktabs,graphicx,cleveref,url,float}
\usepackage{algorithm,algpseudocode}
\usepackage{tikz}
\usetikzlibrary{arrows.meta,positioning,shapes.geometric}
\newtheorem{theorem}{Theorem}[section]

\newtheorem{proposition}[theorem]{Proposition}
\newtheorem{corollary}[theorem]{Corollary}
\numberwithin{algorithm}{section}
\theoremstyle{definition}
\theoremstyle{remark}
\newcommand{\R}{\mathbb R}
\long\def\secondarynumericsA#1\endsecondarynumericsA{\gdef\storednumericsA{#1}}
\long\def\secondarynumericsB#1\endsecondarynumericsB{\gdef\storednumericsB{#1}}
\long\def\secondarynumericsC#1\endsecondarynumericsC{\gdef\storednumericsC{#1}}
\long\def\secondarynumericsD#1\endsecondarynumericsD{\gdef\storednumericsD{#1}}
\long\def\secondarynumericsE#1\endsecondarynumericsE{\gdef\storednumericsE{#1}}
\long\def\secondarynumericsF#1\endsecondarynumericsF{\gdef\storednumericsF{#1}}
\long\def\secondarytheoryA#1\endsecondarytheoryA{\gdef\storedtheoryA{#1}}
\long\def\secondarytheoryB#1\endsecondarytheoryB{\gdef\storedtheoryB{#1}}
\long\def\secondarytheoryC#1\endsecondarytheoryC{\gdef\storedtheoryC{#1}}
\long\def\secondarytheoryD#1\endsecondarytheoryD{\gdef\storedtheoryD{#1}}
\title{Schur--Riesz Refinement for Variational Approximation}
\author{Matthew Dixon}
\date{August 2026}
\begin{document}
\maketitle

\begin{abstract}
Width theory identifies economical spaces for compact PDE solution families
\cite{kolmogorov1936,babuskalipton2011}, but does not provide a stable adaptive
selection rule.  We introduce Schur--Riesz refinement, which compares ordinary
h/p refinement with operator-informed functions on a common variational scale.
Projection removes content already represented by the incumbent, Riesz bounds
test coefficient stability, and exact Schur gain ranks the surviving
directions.  New non-regression, bulk-contraction, and mixed near-oracle results
provide finite-run error, dimension, and work certificates for coercive
Galerkin and noncoercive minimum-residual formulations.

We applied Schur--Riesz refinement across coercive and noncoercive PDEs.  At
matched dimension, automatic modes reduce held-out error by
$50.2\%$ for 2D heterogeneous Helmholtz and $30.9\%$ for 2D Darcy.  At matched Darcy error, the selected space uses
$38.2\%$ fewer coordinates, reducing deployment memory by $38.9\%$ and online
time by $30.1\%$.  The additional offline construction is reusable and can be
amortized across multiple right-hand sides for a fixed operator.
On locked 3D heterogeneous Helmholtz tests, the method reduces mean error by
$25.1$--$47.3\%$ against matched spectral-polynomial spaces and meets target
errors with 24--96 coordinates, versus 1,331--6,859 for native MFEM
hp-refinement.  In an adaptive audit, 48 coordinates attain mean error
$.1092$, compared with $.0967$ using 878 polynomial coordinates, while making
online solves $6.8\times$ faster.
Thus, the experiments demonstrate that Schur--Riesz refinement can construct
smaller stable spaces with lower deployment memory and repeated-solve cost,
while preserving a certified incumbent when richer functions do not help.
\end{abstract}

\section{Introduction}
Adaptive finite elements usually enlarge a trial space through mesh refinement
or higher polynomial degree.  These choices are effective for broad Sobolev
classes: Bramble--Hilbert theory explains their algebraic rates, and graded
hp-spaces can converge exponentially for piecewise analytic solutions
\cite{ciarlet1978,schwab1998}.  A different opportunity arises when the PDE
supplies additional structure---for example waves, interfaces, multiscale
correctors, or transported profiles---that local polynomials may reproduce
only through many degrees of freedom.  Trefftz, partition-of-unity, multiscale,
and transfer-operator methods exploit precisely this opportunity
\cite{hiptmairmoiolaperugia2016,babuskamelenk1997,
efendievgalvishou2013,babuskalipton2011}.

Width theory explains why this can work but does not by itself provide an
adaptive decision.  For a specified compact solution family and norm, the
Kolmogorov $r$-width gives the best worst-case error attainable by an
$r$-dimensional linear space \cite{kolmogorov1936,pinkus1985}.  When the
family is generated by a compact PDE transfer operator, its leading singular
functions are dimension-optimal and generally nonpolynomial
\cite{babuskalipton2011}.  This conclusion is conditional on the chosen
family and norm; it does not displace the generic rate-optimality of
polynomials.  Nor does it determine, from a finite computable collection,
which proposed function is new, stable, and useful for the current
variational problem.

We build on the successive quotient--Riesz stability theory developed in the
companion approximation paper \cite{dixon2026schurriesz}.  The present paper
turns that abstract stability result into a variational refinement method with
PDE error control, adaptive selection, and safe transfer.  We introduce
\emph{Schur--Riesz refinement} to make that decision.  Classical
h/p functions and operator-informed functions are proposed to a common
incumbent space.  Each candidate block is projected against what the
incumbent already represents; its Schur complement measures the remaining
innovation, a quotient--Riesz bound tests stable realizability, and an exact
residual identity measures variational gain.  Thus neither polynomial nor
operator-informed refinement is prescribed in advance: the method retains
whichever supplies the most stable and valuable new direction.  Rejected
automatic transfers return the incumbent exactly.

\paragraph{Literature review.}
Convergence and quasi-optimality of adaptive h-refinement are classical
\cite{binevdahmendevore2004,casconetal2008}; D\"orfler marking converts local
indicators into a bulk refinement rule \cite{dorfler1996}, and hp residual
estimation is well developed \cite{melenkwohlmuth2001}.  Richer trial spaces
also have a substantial history.  Partition-of-unity and extended finite
elements admit problem-specific local functions
\cite{belytschkoblack1999,babuskamelenk1997}; Trefftz methods use local PDE
solutions for oscillatory wave problems \cite{hiptmairmoiolaperugia2016}; and
GMsFEM and localized orthogonal decomposition construct multiscale spaces from
local operator information \cite{efendievgalvishou2013,malqvistpeterseim2014}.
These methods establish the value of nonpolynomial approximation, but their
candidate construction and acceptance criteria are method-dependent.

Kolmogorov-width and transfer-operator theory provide a more intrinsic account
of dimension efficiency \cite{pinkus1985,babuskalipton2011}.  Randomized local
transfer spaces make such constructions computable for parameterized,
time-dependent, and advection-dominated problems
\cite{schleusssmetanatermaat2023}.  For moving features, one fixed linear space
may have slowly decaying widths, motivating nonlinear unions or manifolds
\cite{cohendahmenmula2022}.  These results characterize or approximate strong
spaces for a specified solution family; they do not supply a common adaptive
comparison among h-, p-, and heterogeneous operator-informed proposals.

The stability literature addresses a complementary issue.  Beyond coercivity,
DPG and minimum-residual methods place stability in the trial--test pair and
construct near-optimal test norms \cite{demkowiczgopalakrishnan2011,chanheuer2021}.
Equilibrated reconstruction supplies reliable error control for coercive
problems \cite{braessschoberl2008,ernvohralik2015}.  Finally, the companion
successive Schur--Riesz theorem controls redundant coefficient blocks through
quotient stability \cite{dixon2026schurriesz}.  What remains missing across
these strands is a single decision rule that removes incumbent content,
tests stable realizability, measures variational gain, and safely compares
polynomial with operator-informed refinement.  That is the gap addressed here.

The new contribution is the resulting width-to-decision framework: a
common-incumbent Schur comparison for heterogeneous candidate blocks;
prospective certificates for approximately constructed operator coordinates;
an independently audited non-regression rule; and mixed-pool contraction,
finite-run, work, and perturbed near-oracle bounds.  Classical singular-value
optimality, Galerkin identities, concentration inequalities, and the abstract
quotient--Riesz theorem are cited where invoked rather than claimed anew.

The paper asks three questions.  Can operator-guided candidates be generated
without access to the audit solution and compared fairly with h/p refinement?
Can transfer be accepted while separately controlling novelty, coefficient
stability, and variational value?  Can these tests remain computable for both
coercive and noncoercive problems without forming a dense complement?
Sections~2--4 answer these questions through conditional-width identities,
Schur--Riesz gain and stability tests, safe transfer, and matrix-free
complementary elimination.  The numerical studies then test the claims on
Darcy, Helmholtz, and Maxwell problems; supporting transport, DPG, scaling,
and negative-control studies are deferred to the supplementary material.

The experiments are not intended to establish universal superiority over
hp-FEM.  They test the sharper conditional claim predicted by the theory:
operator-informed functions can improve a matched-size approximation when
they contain stable innovation relevant to the PDE family, while the same
acceptance rule must preserve a stronger incumbent when they do not.

Figure~\ref{fig:sr-refinement} gives the refinement logic before the formal
notation is introduced.  Candidate generation is separated from acceptance:
projection removes represented content, stability and gain test the surviving
innovation, and an independent audit either authorizes transfer or preserves
the incumbent.

\begin{figure}[H]
\centering
\resizebox{\textwidth}{!}{%
\begin{tikzpicture}[
  font=\small,
  stage/.style={rounded corners=2mm,draw=blue!65!black,thick,fill=blue!4,
                minimum height=1.85cm,align=center,inner sep=9pt},
  banner/.style={rounded corners=1.5mm,draw=gray!45,fill=gray!5,
                 align=center,inner sep=5pt},
  outcome/.style={rounded corners=2mm,thick,align=center,inner sep=6pt,
                  minimum height=1.05cm},
  arr/.style={-{Latex[length=2.2mm]},thick,draw=blue!65!black}
]
\node[banner,text width=13.5cm] at (6.10,2.85)
 {\textbf{Safe Schur--Riesz transfer}\\[2pt]
  Generate operator-informed directions, test their innovation, and retain the incumbent when transfer is not justified};

\node[stage,text width=2.65cm] (cand) at (0,0)
 {\textbf{1. Generate}\\[3pt]
  h/p or expert blocks;\\
  randomized transfer and\\
  operator-response modes};
\node[stage,text width=3.15cm] (schur) at (4.05,0)
 {\textbf{2. Remove redundancy}\\[3pt]
  \mbox{$Q_q=(I-P_{Z_m})T\widehat C_q$}\\
  retain operator-visible\\innovation};
\node[stage,text width=2.65cm] (riesz) at (8.10,0.18)
 {\textbf{3. Test stability and value}\\[3pt]
  Riesz lower bound,\\
  Schur gain, transfer margin};
\node[stage,text width=2.65cm] (gain) at (12.15,0)
 {\textbf{4. Decide}\\[3pt]
  accept automatic space\\
  or retain incumbent};
\draw[arr] (cand)--(schur);
\draw[arr] (schur)--(riesz);
\draw[arr] (riesz)--(gain);

\node[outcome,draw=teal!65!black,fill=teal!6,text width=2.75cm] (accept) at (4.05,-3.25)
 {\textbf{Accept}\\stable, high-value transfer};
\node[outcome,draw=blue!55!black,fill=blue!3,text width=2.75cm] (revise) at (8.10,-3.25)
 {\textbf{Retain incumbent}\\exact non-regressing fallback};
\node[outcome,draw=red!60!black,fill=red!3,text width=2.75cm] (reject) at (12.15,-3.25)
 {\textbf{Revise proposal}\\redundant, unstable, or low-value};
\draw[thick,draw=blue!65!black] (gain.south) -- (12.15,-1.85) -- (4.05,-1.85);
\draw[arr] (4.05,-1.85) -- (accept.north);
\draw[arr] (8.10,-1.85) -- (revise.north);
\draw[arr] (12.15,-1.85) -- (reject.north);
\node[fill=white,inner sep=2pt,font=\footnotesize\bfseries,text=blue!55!black]
 at (8.10,-1.85) {refinement decision};
\end{tikzpicture}
}
\caption{Safe automatic-transfer logic.  Here $T$ is the trial-to-test map,
$Z_m=T(V_m)$ is the represented operator image, and $\widehat C_q$ is a
computed candidate block. Projection forms the genuinely new operator-visible
content $Q_q$.  Its Schur gain measures task value and its Riesz lower bound
measures stable realizability.  An independently evaluated transfer margin
authorizes replacement; otherwise the incumbent is returned unchanged.  For
coercive Galerkin problems, $T$ is the energy Riesz representation, while for
noncoercive problems it is the stable trial-to-test map.}
\label{fig:sr-refinement}
\end{figure}

Figure~\ref{fig:sr-refinement} is deliberately kept simple to emphasize the
refinement logic, and therefore suppresses one central detail: the distinction
between coercive and noncoercive problems. In a coercive problem, the energy Riesz map supplies a
canonical norm and local residual information is often effective; the method
therefore provides a common comparison and certification layer over mature
h/p technology. Beyond coercivity, stability belongs to the trial--test pair,
and local forcing can generate a global response. Here the same framework is
more consequential: it determines the correct geometry, detects when local
enrichment is structurally inadequate, and admits compressed nonlocal
operator responses.  The experiments show material gains in both regimes,
while also showing that fine-grid compatible Maxwell can favor the incumbent.

\section{Schur--Riesz transfer in variational spaces}
To turn width information into a usable numerical method, three questions must
be resolved: how candidate directions are computed from the operator; which
part is absent from the incumbent space; and, after unstable innovations are
rejected, which admissible block has the greatest certified variational value.
We first describe a
general transfer construction, then record conformity and nesting requirements
for finite-element enrichment, and finally derive quotient stability and exact
gain.  This separates candidate generation, representation, stability, and
task utility before any particular PDE or function family is chosen.

At stage $m$, let $C_{
m hp}$ and $C_{\rm op}$ denote proposed polynomial and
operator-derived blocks and set $Z_m=T(V_m)$.  Their residual-space innovations
are defined symmetrically by
\begin{equation}
 Q_{\rm hp}=(I-P_{Z_m}^Y)(T\circ C_{\rm hp}),\qquad
 Q_{\rm op}=(I-P_{Z_m}^Y)(T\circ C_{\rm op}).
 \label{eq:common-incumbent-innovations}
\end{equation}
Both use the same incumbent projector.  This removes what either family
already shares with the incumbent, allowing their remaining operator-visible
innovations to be compared on the same variational scale.  Riesz admissibility
is then checked for each nonzero quotient block, after which certified gain
ranks the survivors.
If one block is accepted, $V_m$ is enlarged and Eq.~\eqref{eq:common-incumbent-innovations}
is recomputed.  Consequently expressions such as
$(I-P_{T(V_m+C_{\rm hp})}^Y)(T\circ C_{\rm op})$ describe a later sequential
factorization, not an assumption that h/p precedes operator enrichment; the
reverse expression is equally valid.

\subsection{Operator-generated transfer candidates}
Let $\mathcal G$ describe admissible loads, boundary traces, initial data, or
local source probes, and let $K:\mathcal G\to X$ be the corresponding
solution, transfer, or response operator.  Examples include an elliptic inverse
restricted to a patch, a Helmholtz trace-to-interior map, and an evolution map
from initial data to a trajectory space.  Width theory identifies the leading
left singular functions of $K$ as optimal for the compact image
$K(B_{\mathcal G})$.  They can be approximated without an audit solution by
applying $K$ to a deterministic quadrature or randomized probe matrix $\Omega$
and compressing the response block
\begin{equation}
 \widehat C=K\Omega.
 \label{eq:automatic-transfer-block}
\end{equation}
Local oversampling, randomized range finding, transfer eigenproblems, and
empirical trajectory covariance are different ways to construct the response
block in Eq.~\eqref{eq:automatic-transfer-block}.

The raw range of $\widehat C$ is not yet an admissible refinement.  Relative to
an incumbent space $V_m$, form
\[
 Q=(I-P_{T(V_m)}^Y)(T\circ\widehat C),\qquad D=Q^*Q.
\]
The singular vectors of $Q$ are the automatically generated directions that
remain after incumbent content has been removed.  Proposition~\ref{prop:conditional-width}
identifies the spectrum of $D$ with the conditional width spectrum of this
computable response family.  Theorem~\ref{thm:main-exact-gain} measures its
variational value, and Theorem~\ref{thm:safe-automatic-transfer} decides
whether it may replace the incumbent.  Thus width theory proposes directions;
Schur--Riesz transfer turns them into a stable, non-regressing decision.

The construction deliberately parallels hierarchical p-refinement.  If
$V_{h,p}$ is a conforming degree-$p$ finite element space and $B_{p+1}$ is its
next hierarchical polynomial block, with coefficient space $E_{p+1}$, then
the familiar p-step is
\begin{equation}
 V_{h,p+1}=V_{h,p}+B_{p+1}(E_{p+1}).
 \label{eq:p-step-analogy}
\end{equation}
A general enrichment step has the same nested form,
\begin{equation}
 V_{m+1}=V_m+C_m(E_m),
 \label{eq:c-step-analogy}
\end{equation}
where $C_m$ may be an h/p block, a compressed response from
Eq.~\eqref{eq:automatic-transfer-block}, or a nonpolynomial function.  This
greater freedom creates two questions that the standard polynomial ladder
largely avoids: whether $C_m$ merely reproduces functions already in $V_m$,
and whether its genuinely new component is useful for the present PDE.
Projection and exact variational gain answer these questions below.
Composition-driven c-refinement is the special case in which composition
generates $C_m$; it changes the candidate class, not the Galerkin principle.

\subsection{Conforming enriched spaces}
We begin with nesting, since innovation and gain are meaningful across
refinement levels only when previously accepted functions are preserved.
Let $\mathcal T_m$ be a conforming newest-vertex-bisection mesh and let
$M_m$ be a finite collection of compactly represented, conforming physical
macro-functions.  These functions need not be polynomial and need not be
compositions.  Polynomial edge
modes are defined on the two-triangle patch sharing their parent edge;
polynomial and non-polynomial interior modes have zero trace on their parent
triangle.  Set
\[
 V_m=V_1(\mathcal T_m)+\operatorname{span}M_m\subset H_0^1(\Omega).
\]
This formula generalizes the usual hierarchical decomposition of an hp space.
The nodal space $V_1(\mathcal T_m)$ provides the conforming finite
element backbone, while $M_m$ records accepted enrichment functions just as a
hierarchical p-basis records accepted polynomial modes.  The difference is
that membership of $M_m$ is determined by conformity, nonredundancy, and
variational value rather than by polynomial degree alone.

The first requirement for stable adaptive enrichment is exact
preservation: changing the mesh must not alter
an enrichment that has already been accepted.  Otherwise successive errors
would compare different functions rather than nested approximation spaces.
The required preservation is the standard nested-space fact used in conforming
adaptive finite elements \cite{binevdahmendevore2004,ciarlet1978}.  If
$\mathcal T_{m+1}$ refines $\mathcal T_m$ and every function in $M_m$ is
retained as the same physical function, then
$V_m\subset V_1(\mathcal T_{m+1})+\operatorname{span}M_m$.  Hence the old
Galerkin function remains exactly representable and minimization over the
larger space cannot increase the objective.  We record this classical fact to
state the implementation requirement, not as a new lemma.

Thus h-refinement enlarges the space without erasing earlier non-polynomial
content.  This is the nesting property on which the later monotonicity argument
depends.

\subsection{Quotient stability and exact variational gain}
Once nesting is secured, an admissible addition must pass two complementary
tests: its genuinely new component must remain stably distinguishable from
earlier h-, p-, and non-polynomial contributions, and that component must
reduce the Galerkin error.  The first test concerns the geometry of the growing
space; the second depends on the PDE.  We establish both in the symmetric
coercive setting
used by the principal elliptic examples.  Proofs are supplied only for the
new results and nontrivial specializations; established identities are cited
at their point of use.  Minimum-residual extensions, patchwise reconstruction,
quadrature control, and conditional complexity results are provided in the
Supplementary Material.

Suppose the approximation space is generated successively by bounded maps
$S_\ell:E_\ell\to H_0^1(\Omega)$, with
$S^{(m)}c=\sum_{\ell\leq m}S_\ell c_\ell$.  Coefficients are identified when
they represent the same function.  Let
\[
 R_\ell=(I-P_{V_{\ell-1}}^a)S_\ell,\qquad D_\ell=R_\ell^*R_\ell.
\]
Thus $R_\ell$ is the genuinely new part of level $\ell$, and $D_\ell$ is its
energy Gram operator.

Write $R^{(m)}d=\sum_{\ell=1}^mR_\ell d_\ell$. Successive block elimination
gives a compatible factorization
\[
 S^{(m)}=R^{(m)}L_m
\]
on the quotient coefficient space. Here $L_m$ converts the original block
coefficients into innovation coefficients; it is the block analogue of the
unit triangular factor in an $LDL^*$ factorization.

Because different formulas can represent identical or nearly identical
functions, stability must be measured after the old space and exact null
directions have been removed.  The next theorem is the energy-space
specialization of the Successive-Space Riesz Theorem in the companion
approximation paper.  It is stated without proof because the present paper
uses, rather than reproves, that result.

\begin{theorem}[Quotient--Riesz stability
  {\cite[Theorem~2.2]{dixon2026schurriesz}}]
\label{prop:main-successive-stability}
Assume that, on the quotient coefficient spaces,
\[
 \delta I\preceq D_\ell\preceq\Delta I
\]
uniformly in $\ell$, and that the associated block elimination maps and their
inverses have norms bounded by $M$ and $K$.  Then
\begin{equation}
 \frac{\delta}{K^2}\|[c]\|^2
 \leq \|S^{(m)}[c]\|_a^2
 \leq \Delta M^2\|[c]\|^2
 \label{eq:main-successive-stability}
\end{equation}
for every level $m$.  Exact duplicate functions disappear in the quotient,
and the constants do not deteriorate merely because further stable levels are
added.
\end{theorem}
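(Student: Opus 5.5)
The plan is to pass the energy norm through the block factorization $S^{(m)}=R^{(m)}L_m$ and use the fact that the innovation blocks are mutually $a$-orthogonal. This makes $R^{(m)}$ block-diagonal in energy, so its norm is controlled by the spectra of the $D_\ell$. The elimination factor $L_m$ then contributes only its norm bounds $M$ and $K$.

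First, orthogonality. For $\ell<k$, each $R_\ell d_\ell$ lies in $V_\ell\subset V_{k-1}$, by nesting and the definition of $R_\ell$. Also $R_kd_k=(I-P^a_{V_{k-1}})S_kd_k$ is $a$-orthogonal to $V_{k-1}$. Hence
\[
 \|R^{(m)}d\|_a^2=\sum_{\ell=1}^m\|R_\ell d_\ell\|_a^2=\sum_{\ell=1}^m\langle D_\ell d_\ell,d_\ell\rangle .
\]
Combined with $\delta I\preceq D_\ell\preceq\Delta I$ on the quotient spaces $E_\ell/\ker R_\ell$, this gives
\[
 \delta\|[d]\|^2\le\|R^{(m)}[d]\|_a^2\le\Delta\|[d]\|^2
\]
uniformly in $m$. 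Taking the quotient by $\ker D_\ell=\ker R_\ell$ is exactly what removes duplicate functions: a direction already in $V_{\ell-1}$ has $R_\ell d=0$ and disappears.

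Second, the factorization. I will construct $L_m$ by successive block elimination. Since $P^a_{V_{\ell-1}}S_\ell c_\ell\in V_{\ell-1}=\operatorname{ran}R^{(\ell-1)}$, it can be rewritten in earlier innovation coordinates, which gives a block unit-triangular map with $S^{(m)}=R^{(m)}L_m$. The main obstacle is well-definedness on quotients. Two coefficient vectors representing the same function must map to innovation coefficients that agree modulo $\bigoplus\ker R_\ell$, and the earlier-coordinate representative must be chosen consistently. I would first define $L_m$ on classes via the identity $S^{(m)}[c]=R^{(m)}L_m[c]$, which fixes $L_m[c]$ uniquely in the quotient because $R^{(m)}$ is injective there by the first step. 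The induced quotient map is then linear and invertible, and the hypothesis supplies $\|L_m\|\le M$ and $\|L_m^{-1}\|\le K$.

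Third, conclude. The upper bound is
\[
 \|S^{(m)}[c]\|_a^2\le\Delta\|L_m[c]\|^2\le\Delta M^2\|[c]\|^2 .
\]
For the lower bound, $\|[c]\|\le K\|L_m[c]\|$, so
\[
 \|S^{(m)}[c]\|_a^2\ge\delta\|L_m[c]\|^2\ge\frac{\delta}{K^2}\|[c]\|^2 .
\]
None of the constants depend on $m$, which gives the final claim that adding stable levels does not degrade the bounds.
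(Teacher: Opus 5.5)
Your argument is correct. Mutual $a$-orthogonality of the innovation blocks gives $R^{(m)*}R^{(m)}=\operatorname{diag}(D_\ell)$, hence $S^{(m)*}S^{(m)}=L_m^*\operatorname{diag}(D_\ell)L_m$ on the quotients. Defining $L_m$ on classes through the injectivity of $R^{(m)}$ cleanly settles well-definedness and identifies it with the paper's elimination factor. You also correctly read the hypothesis as uniform bounds on the cumulative maps $L_m$ and $L_m^{-1}$, rather than on individual elimination steps whose product could grow with $m$.

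The paper gives no proof of its own; it imports the result from the companion paper. Your argument is exactly the block $LDL^*$ route that its surrounding setup describes.
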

This is the direct point at which the companion Schur--Riesz theory enters.
It does not claim that a stable new function is useful for the PDE.  Rather,
it guarantees that its contribution can be separated from earlier levels and
that coefficient perturbations have a controlled functional effect.  PDE
relevance enters through the residual.

Let $a(\cdot,\cdot)$ be the energy inner product, let $u_m$ be the Galerkin
solution in $V_m$, and let $C:E_C\to H_0^1(\Omega)$ generate a finite candidate
block. Denote its candidate space by $\mathcal C=C(E_C)$. Remove what is
already represented by setting
\[
 Q=(I-P_{V_m}^a)C,\qquad D=Q^*Q,
 \qquad b(z)=F(Qz)-a(u_m,Qz).
\]
The corresponding innovation space is $\mathcal I_C=Q(E_C)$.
Coefficients in $\ker Q$ represent no new function and are identified.

The residual $b$ measures PDE relevance, while $D$ measures the geometry of
the genuinely new direction.  The following proposition is the classical
Hilbert-ellipsoid width identity \cite{kolmogorov1936,pinkus1985} applied after
quotienting the incumbent space; the identity itself is not new.

\begin{proposition}[Projected-ellipsoid width identity
  {\cite{kolmogorov1936,pinkus1985}}]
\label{prop:conditional-width}
Let $B_E$ be the unit ball of the candidate coefficient space after
quotienting $\ker Q$, and let
$\mathcal K_C=Q(B_E)\subset V_m^{\perp_a}$.  If
$\lambda_1(D)\geq\lambda_2(D)\geq\cdots>0$, then
\begin{equation}
 \underbrace{\inf_{\dim W=k}\sup_{z\in\mathcal K_C}
              \inf_{w\in W}\|z-w\|_a}_{d_k(\mathcal K_C;V_m^{\perp_a})}
 =\sigma_{k+1}(Q)=\sqrt{\lambda_{k+1}(D)}.
 \label{eq:conditional-width}
\end{equation}
Hence the Schur spectrum is exactly the Kolmogorov-width spectrum of the
candidate family after removing the current approximation space.
\end{proposition}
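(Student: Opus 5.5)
The plan is to reduce the statement to the classical Kolmogorov-width formula for a Hilbert-space ellipsoid, that is, the image of a unit ball under a finite-rank (hence compact) operator. Work in the Hilbert space $H=V_m^{\perp_a}$ with the energy inner product. Equip the quotient coefficient space $E_C/\ker Q$ with the inherited norm, so that $B_E$ is its unit ball. The induced map $\widetilde Q:E_C/\ker Q\to H$ is then injective with the same nonzero singular values as $Q$. Its Gram operator is $D=Q^*Q$ restricted to $(\ker Q)^\perp$, which is positive definite there; this matches the hypothesis $\lambda_j(D)>0$. Since $E_C$ is finite-dimensional, the singular value decomposition
\[
 Qz=\sum_j \sigma_j\langle z,e_j\rangle f_j,\qquad \sigma_j=\sqrt{\lambda_j(D)},
\]
holds with $\{e_j\}$ orthonormal in the quotient space and $\{f_j\}$ $a$-orthonormal in $H$. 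Hence $\mathcal K_C$ is the ellipsoid $\{\sum_j \sigma_j t_j f_j:\sum_j t_j^2\le 1\}$. The convention $\sigma_{k+1}=0$ beyond the rank also needs to be fixed.

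\textbf{Upper bound.} Take $W=\operatorname{span}\{f_1,\dots,f_k\}$. For $z=\sum_j\sigma_j t_j f_j\in\mathcal K_C$, the $a$-orthogonal projection error is
\[
 \Bigl(\sum_{j>k}\sigma_j^2t_j^2\Bigr)^{1/2}\le \sigma_{k+1},
\]
with equality at $t=e_{k+1}$. So $d_k\le\sigma_{k+1}$.

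\textbf{Lower bound.} This is the main step. Let $W\subset H$ be arbitrary with $\dim W=k$. I would not restrict to $W\subset H$: a general $W$ in the ambient space can be replaced by its $a$-orthogonal projection onto $H$ (or onto $\operatorname{span}\{f_j\}$) without increasing any distance, since the targets lie in that subspace. Consider the $(k+1)$-dimensional space $U=\operatorname{span}\{f_1,\dots,f_{k+1}\}$. Because $\dim W<\dim U$, there is a unit vector $g\in U$ that is $a$-orthogonal to $W$, namely $g=\sum_{j\le k+1}g_jf_j$ with $\sum_j g_j^2=1$. Set $t_j=g_j\sigma_{k+1}/\sigma_j$ for $j\le k+1$, so that
\[
 z=\sum_{j\le k+1}\sigma_{k+1}g_jf_j=\sigma_{k+1}g,\qquad \sum_j t_j^2\le\sum_j g_j^2=1,
\]
using $\sigma_j\ge\sigma_{k+1}$. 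Thus $z\in\mathcal K_C$. Since $g\perp_a W$, we get $\inf_{w\in W}\|z-w\|_a=\|z\|_a=\sigma_{k+1}$. Taking the supremum over $z$ and then the infimum over $W$ gives $d_k\ge\sigma_{k+1}$.

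The obstacle here is bookkeeping rather than analysis. One must make sure the norm on $B_E$ is the quotient norm; otherwise the $\ker Q$ directions distort nothing but the identification with $D$ must be on $(\ker Q)^\perp$. One must also handle the case where $k$ reaches or exceeds the rank. In that case, when $k\ge\operatorname{rank}Q$, the space $W=\operatorname{range}Q$ gives width $0=\sigma_{k+1}$. The final sentence of the proposition then follows by reading off that the eigenvalues of $D$ are precisely the squared widths.
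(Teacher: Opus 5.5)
Your proposal is correct and takes the same route as the paper. The paper gives no separate proof: it invokes the classical Kolmogorov--Pinkus width formula for the Hilbert ellipsoid $\mathcal K_C=Q(B_E)$ after quotienting $\ker Q$. You have written out that classical argument in full, using the span of the first $k$ left singular vectors for the upper bound, a unit vector in $\operatorname{span}\{f_1,\dots,f_{k+1}\}$ orthogonal to $W$ for the lower bound, and the convention $\sigma_{k+1}=0$ beyond the rank.
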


Proposition~\ref{prop:conditional-width} is not itself a new bridge theorem:
it is the classical width formula applied to the projected candidate
ellipsoid $\mathcal K_C$.  It supplies the width-theoretic input to the new
framework.  The operational bridge is the subsequent combination of this
projected spectrum with residual gain, quotient stability, and certified
adaptive acceptance.  It is conditional because $\mathcal K_C$ is generated
by the available candidate block, not by the unknown full PDE solution class.

The gain formula is likewise the standard Galerkin Pythagorean identity
written in quotient--Schur coordinates \cite{ciarlet1978}.  We state it to
define the common h/p/operator ranking functional, not as a new orthogonality
theorem.

\begin{theorem}[Galerkin gain identity
  {\cite{ciarlet1978}}]
\label{thm:main-exact-gain}
The decrease in squared energy error obtained by adjoining the block is
\begin{equation}
 \|u-u_m\|_a^2-\|u-P_{V_m+\mathcal C}^au\|_a^2
 =\langle b,D^\dagger b\rangle.
 \label{eq:main-exact-gain}
\end{equation}
It is computable without knowing $u$.  If
$A_CI\preceq D\preceq B_CI$ on the quotient, then the gain lies between
$\|b\|^2/B_C$ and $\|b\|^2/A_C$.
\end{theorem}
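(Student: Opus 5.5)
The plan is to reduce everything to an orthogonal decomposition of the enlarged space in the energy inner product. Since $Q=(I-P_{V_m}^a)C$, every element of $V_m+\mathcal C$ can be written as $v+Qz$ with $v\in V_m$, and $\mathcal I_C=Q(E_C)\subset V_m^{\perp_a}$. Hence $V_m+\mathcal C=V_m\oplus_a\mathcal I_C$ is an $a$-orthogonal direct sum, and
\[
P_{V_m+\mathcal C}^a=P_{V_m}^a+P_{\mathcal I_C}^a .
\]
Because $u_m=P_{V_m}^au$, the Galerkin Pythagorean identity gives
\[
\|u-u_m\|_a^2-\|u-P_{V_m+\mathcal C}^au\|_a^2=\|P_{\mathcal I_C}^a(u-u_m)\|_a^2 .
\]
This is the classical step cited from \cite{ciarlet1978}. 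It needs only that $u$ solves $a(u,w)=F(w)$ for all admissible $w$ and that $P_{\mathcal I_C}^a u_m=0$.

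The second step computes this projection in coefficient coordinates. Set $e=u-u_m$. The projection $P_{\mathcal I_C}^ae=Qz_\star$ is characterized by the normal equations $a(Qz_\star,Qy)=a(e,Qy)$ for all $y\in E_C$, that is, $Dz_\star=Q^*e$. By Galerkin orthogonality of the exact solution, $a(e,Qy)=F(Qy)-a(u_m,Qy)=b(y)$. Identifying $b$ with a vector through the coefficient inner product, the normal equations therefore read $Dz_\star=b$. This is where computability without $u$ enters: $b$ involves only $F$, $u_m$ and $Q$.

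The third step handles the quotient and the possible singularity of $D$. We have $\ker D=\ker Q$, and $b$ annihilates $\ker Q$, so $b\in\operatorname{ran}D=(\ker D)^\perp$. Hence $z_\star=D^\dagger b$ is a solution, unique modulo $\ker Q$, and
\[
\|Qz_\star\|_a^2=\langle Dz_\star,z_\star\rangle=\langle b,D^\dagger b\rangle .
\]
This gives \eqref{eq:main-exact-gain}.

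I expect the main, though mild, obstacle to be this quotient bookkeeping. One has to check that $b$ lies in the range of $D$ and that the value $\langle b,D^\dagger b\rangle$ does not depend on the representative chosen. Both follow from $\ker D=\ker Q$ in finite dimensions.

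Finally, suppose $A_CI\preceq D\preceq B_CI$ on the quotient $E_C/\ker Q$. Then $D^\dagger$ restricted to $\operatorname{ran}D$ satisfies $B_C^{-1}I\preceq D^\dagger\preceq A_C^{-1}I$. Since $b\in\operatorname{ran}D$, it follows that
\[
\frac{\|b\|^2}{B_C}\le\langle b,D^\dagger b\rangle\le\frac{\|b\|^2}{A_C},
\]
which yields the stated two-sided bound.
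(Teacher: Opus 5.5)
Your proof is correct and follows the route the paper indicates. The paper gives no argument beyond calling the identity Galerkin Pythagoras, with $Q$ the $a$-orthogonal new part of the candidate and $D^\dagger b$ the best correction; your decomposition $V_m+\mathcal C=V_m\oplus_a\mathcal I_C$, the normal equations $Dz_\star=b$, and the check that $b$ annihilates $\ker Q=\ker D$ (so $b\in\operatorname{ran}D$) fill in the quotient details that the paper leaves implicit.
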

The identity is Galerkin Pythagoras: $Q$ is precisely the part of the candidate
orthogonal to the old space, and $D^\dagger b$ is the best correction in that
new direction.  Thus $D$ measures genuinely new functional content while
$b$ measures whether the unresolved PDE residual uses it.

\secondarytheoryA
For repeated solves, Eq.~\eqref{eq:main-exact-gain} also identifies the
operator quantity that replaces a small collection of task residuals.  Let
the load $F$ be random with covariance operator $\Sigma_F$, and construct
$Q$ using the fixed PDE operator and current space.
The following proposition is an elementary covariance specialization of the
classical Galerkin gain identity, rather than a new operator theorem.
\begin{proposition}[Covariance form of Galerkin gain
  {\cite{ciarlet1978}}]
\label{prop:operator-covariance-gain}
The expected squared energy-error reduction from the candidate block is
\begin{equation}
 \mathbb E_F[G_C(F)]
 =\operatorname{tr}\!\left(D^\dagger Q^*\Sigma_F Q\right).
 \label{eq:operator-covariance-gain}
\end{equation}
For an energy-normalized scalar innovation $q$, this reduces to
$\mathbb E_F[G_q(F)]=\langle q,\Sigma_Fq\rangle$.
\end{proposition}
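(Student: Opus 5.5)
The plan is to start from the deterministic gain identity of Theorem~\ref{thm:main-exact-gain}, written as a quadratic form in the load, and then take expectations. The key structural fact is that $Q$, $D$ and $u_m$'s dependence on $F$ are fixed or linear. The operator $Q=(I-P_{V_m}^a)C$ and its Gram operator $D=Q^*Q$ are built from the fixed PDE operator and the current space, so they do not depend on $F$. The Galerkin solution $u_m$ is a linear function of $F$.

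The first step is to rewrite the residual functional. Galerkin orthogonality gives $a(u_m,v)=F(v)$ for all $v\in V_m$. For $z\in E_C$ we have $Qz\in V_m^{\perp_a}$, so $a(u_m,Qz)=0$ by the definition of $P_{V_m}^a$. Hence $b(z)=F(Qz)$, and identifying $F$ with its Riesz representer in the pairing used for $\Sigma_F$, we get $b=Q^*F$. The gain is then
\[
G_C(F)=\langle Q^*F,D^\dagger Q^*F\rangle=\langle F,QD^\dagger Q^*F\rangle.
\]
This is a quadratic form in $F$ with a fixed, finite-rank, positive semidefinite kernel $QD^\dagger Q^*$.

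The second step is the trace identity $\mathbb E\langle F,AF\rangle=\operatorname{tr}(A\,\mathbb E[F\otimes F])=\operatorname{tr}(A\Sigma_F)$ for finite-rank $A$. Here $\Sigma_F$ is read as the second-moment (uncentred) operator, or the load is assumed to be mean zero; I would state this convention explicitly. Since $A=QD^\dagger Q^*$ has finite rank, the trace is well defined whenever $F$ has a finite second moment. Cyclicity then gives $\operatorname{tr}(QD^\dagger Q^*\Sigma_F)=\operatorname{tr}(D^\dagger Q^*\Sigma_F Q)$, which is \eqref{eq:operator-covariance-gain}.

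For the scalar case, take $E_C=\mathbb R$ and let $Q$ map $1\mapsto q$ with $\|q\|_a=1$. Then $D=1$, $D^\dagger=1$, and the trace reduces to $\langle q,\Sigma_F q\rangle$.

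The main obstacle is bookkeeping rather than depth. It consists of three points:
\begin{itemize}
\item making the pairing consistent, i.e.\ in which space $\Sigma_F$ acts, so that $Q^*\Sigma_F Q$ is the right composition;
\item justifying the interchange of expectation and trace, which is fine since the trace is over the finite-dimensional quotient coefficient space;
\item noting that the pseudoinverse is harmless because the quotient by $\ker Q$ makes $D$ invertible on the relevant space.
\end{itemize}
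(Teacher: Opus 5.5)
Your argument is correct. It is the elementary specialization the paper has in mind; the paper gives no separate proof and only cites the Galerkin gain identity. You eliminate $a(u_m,Qz)$ by $a$-orthogonality, write the gain as a fixed finite-rank quadratic form $\langle F,QD^\dagger Q^*F\rangle$ in the load, and take expectations using the trace identity and cyclicity. Your explicit convention that $\Sigma_F$ be the uncentred second-moment operator, or that the load have mean zero, is a genuine requirement that the statement leaves implicit.
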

Thus a quadrature or model for admissible loads can rank transferable
directions without solving the PDE for those loads.  The PDE still enters
essentially through the energy projection $Q$ and normalization $D$; the load
covariance specifies which operator-visible directions must transfer.  A
finite pilot average and an operator-covariance score answer different
questions: the former optimizes the observed tasks, whereas the latter
optimizes expected reduction under a declared load law.

The same statement extends beyond coercivity once trial functions are measured
in a stable test-space residual norm.  Let $B:X\to Y^*$ be the variational
operator, let $R_Y:Y\to Y^*$ be the Riesz map, and set
$T=R_Y^{-1}B$ and $g=R_Y^{-1}F$.  For the current residual-image space
$Z_m=T(V_m)$ and candidate image $Z_C=T(\mathcal C)$, define
\[
 Q=(I-P_{Z_m}^Y)(T\circ C),\qquad D=Q^*Q.
\]
The next statement is the minimum-residual specialization of the same
classical projection and covariance identity; its role here is to extend the
ranking functional to stable noncoercive formulations.
\begin{theorem}[Minimum-residual covariance form
  {\cite{demkowiczgopalakrishnan2011}}]
\label{thm:minres-operator-covariance}
If the transformed load $g$ has covariance $\Sigma_g$, then the expected
decrease in squared minimum-residual error caused by adjoining $C$ is
\begin{equation}
 \mathbb E_g\!\left[
 \|g-P_{Z_m}g\|_Y^2-
 \|g-P_{Z_m+Z_C}g\|_Y^2\right]
 =\operatorname{tr}(D^\dagger Q^*\Sigma_gQ).
 \label{eq:minres-operator-covariance}
\end{equation}
If $B$ has inf--sup constant $\beta>0$, the residual norm also controls trial
error through $\beta\|u-u_m\|_X\leq\|B(u-u_m)\|_{Y^*}$.
\end{theorem}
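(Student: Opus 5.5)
The plan is to reduce the statement to a deterministic Pythagorean identity in the Hilbert space $Y$, then take expectations. For a fixed load $g$, the minimum-residual solution over $V_m$ is characterized by $Tu_m=P_{Z_m}g$: minimizing $\|B v-F\|_{Y^*}=\|Tv-g\|_Y$ over $v\in V_m$ is the orthogonal projection of $g$ onto $Z_m=T(V_m)$. The same holds over $V_m+\mathcal C$, whose image is $Z_m+Z_C$. So the bracket in \eqref{eq:minres-operator-covariance} equals $\|P_{Z_m+Z_C}g\|_Y^2-\|P_{Z_m}g\|_Y^2$.

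The key structural step is the orthogonal splitting $Z_m+Z_C=Z_m\oplus \operatorname{ran}Q$, where $Q=(I-P_{Z_m}^Y)(T\circ C)$. This holds because every element of $Z_C$ differs from its $Q$-image by an element of $Z_m$, and $\operatorname{ran}Q\perp Z_m$ by construction. Hence the bracket equals $\|P_{\operatorname{ran}Q}g\|_Y^2$.

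I will then write the projector onto $\operatorname{ran}Q$ via the Moore--Penrose pseudoinverse, $P_{\operatorname{ran}Q}=Q D^\dagger Q^*$ with $D=Q^*Q$. This is finite-dimensional linear algebra on the quotient $E_C/\ker Q$, where $D$ is invertible. It gives
\[
\|P_{\operatorname{ran}Q}g\|_Y^2=\langle Q^*g,D^\dagger Q^*g\rangle ,
\]
which is the minimum-residual analogue of $\langle b,D^\dagger b\rangle$ in Theorem~\ref{thm:main-exact-gain}. Here $b=Q^*g$, and $Q^*P_{Z_m}g=0$, so using $g$ or the residual $g-Tu_m$ is immaterial.

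Next comes the expectation. The quantity is a quadratic form $\operatorname{tr}(D^\dagger Q^*\,g\otimes g\,Q)$, with $Q$ and $D$ deterministic because they depend only on the operator and the spaces. Linearity of trace and expectation then gives $\operatorname{tr}(D^\dagger Q^*\Sigma_gQ)$, where $\Sigma_g=\mathbb E[g\otimes g]$ is the second-moment operator.

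The main obstacle is interpretive rather than technical. If "covariance" means the centered covariance, then a mean term $\langle Q^*\bar g,D^\dagger Q^*\bar g\rangle$ appears. I would therefore state the result for zero-mean loads, or interpret $\Sigma_g$ as the uncentered second moment, mirroring Proposition~\ref{prop:operator-covariance-gain}. I would also assume $\mathbb E\|g\|_Y^2<\infty$ so the trace is finite. Since the range of $Q$ is finite-dimensional, the trace is a finite sum and no trace-class issues arise.

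Finally, the inf--sup claim is immediate. For $v=u-u_m$, $\beta\|v\|_X\le\sup_{y}\langle Bv,y\rangle/\|y\|_Y=\|Bv\|_{Y^*}$. This is exactly the definition of the inf--sup constant, and it ties the residual reduction to the trial error.
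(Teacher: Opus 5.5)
Your proof is correct and follows the route the paper intends. The paper gives no written proof here; it cites the result as the minimum-residual specialization of the classical projection identity. You identify the minimum-residual approximant with $P_{Z_m}g$ via $T=R_Y^{-1}B$, split $Z_m+Z_C=Z_m\oplus\operatorname{ran}Q$ so the gain becomes $\langle Q^*g,D^\dagger Q^*g\rangle$, and then take expectations by linearity, which is exactly that identity.

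Your caveat is also correct. The trace formula holds as stated only if $\Sigma_g$ is read as the uncentered second-moment operator $\mathbb E[g\otimes g]$, or if $g$ has mean zero. The paper leaves this point implicit.
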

The theorem covers transport, Helmholtz, convection--diffusion, and mixed
problems provided a stable test norm, graph norm, symmetrizer, or
minimum-residual formulation is supplied.  It also explains why covariance in
an arbitrary raw residual norm can be misleading: the trace formula is exact
for that norm, but the norm may have poor equivalence to the desired solution
error.
\endsecondarytheoryA

Automatic transfer construction raises one additional question that does not
arise when a candidate block is merely appended: may an automatically generated
space safely replace an established incumbent of the same dimension?  The next
result gives a population-level answer.  It applies to coercive Galerkin
approximation after identifying a load with its energy representer, and to
noncoercive minimum-residual approximation after transforming the load into the
test space.

The concentration step below is Hoeffding's classical inequality
\cite{hoeffding1963}.  The new result is the selection theorem obtained by
coupling that margin to projection gain, quotient--Riesz admissibility, and
exact incumbent recovery.

\begin{theorem}[Safe automatic-transfer selection]
\label{thm:safe-automatic-transfer}
Let $Y$ be a Hilbert space, let $g$ be a random element of $Y$, and let
$Z_I,Z_A\subset Y$ be respectively an incumbent and an automatically generated
finite-dimensional approximation space.  Define
\[
 \mathcal E(Z)=\mathbb E\|g-P_Zg\|_Y^2,
 \qquad J_Z(g)=\|P_Zg\|_Y^2.
\]
Suppose that $\|g\|_Y^2\leq M$ almost surely and that
$g_1,\ldots,g_n$ are independent marking samples, generated independently of
$Z_I$ and $Z_A$.  Set
\[
 \widehat\Delta_n=\frac{1}{n}\sum_{i=1}^n
       \bigl(J_{Z_A}(g_i)-J_{Z_I}(g_i)\bigr),\qquad
 \tau_{n,\alpha}=M\sqrt{\frac{2\log(1/\alpha)}{n}}.
\]
Accept $Z_A$ only if $\widehat\Delta_n>\tau_{n,\alpha}$ and its quotient
synthesis map has a prescribed positive Riesz lower bound; otherwise return
$Z_I$.  Then, with probability at least $1-\alpha$,
\begin{equation}
 \mathcal E(Z_{\rm safe})\leq \mathcal E(Z_I),
 \quad\hbox{and, if accepted,}\quad
 \mathcal E(Z_A)\leq\mathcal E(Z_I)
       -(\widehat\Delta_n-\tau_{n,\alpha}).
 \label{eq:safe-transfer-nonregression}
\end{equation}
Rejection recovers the incumbent space and approximant exactly.
\end{theorem}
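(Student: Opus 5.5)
The argument reduces the error comparison to a comparison of expected captured energies and then applies Hoeffding's inequality to the bounded marking differences. Since $P_Z$ is an orthogonal projector in $Y$, Pythagoras gives $\|g-P_Zg\|_Y^2=\|g\|_Y^2-J_Z(g)$ pointwise. Taking expectations, which are finite because $\|g\|_Y^2\le M$,
\[
 \mathcal E(Z_I)-\mathcal E(Z_A)=\mathbb E\bigl[J_{Z_A}(g)-J_{Z_I}(g)\bigr]=:\Delta .
\]
The population improvement is therefore exactly the mean of the random variables $X_i=J_{Z_A}(g_i)-J_{Z_I}(g_i)$, and $\widehat\Delta_n$ is their empirical mean.

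\textbf{Concentration step.} Because $Z_I$ and $Z_A$ are generated independently of the marking samples, we may condition on them. Conditionally, the $X_i$ are i.i.d. with mean $\Delta$. Since $0\le J_Z(g)\le\|g\|_Y^2\le M$, each $X_i$ lies in $[-M,M]$, an interval of length $2M$. The one-sided Hoeffding inequality then gives
\[
 \Pr\bigl(\widehat\Delta_n-\Delta\ge t\bigr)\le\exp\!\bigl(-2n t^2/(2M)^2\bigr)=\exp\!\bigl(-nt^2/(2M^2)\bigr).
\]
Setting $t=\tau_{n,\alpha}=M\sqrt{2\log(1/\alpha)/n}$ makes the right side equal to $\alpha$. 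Hence, with probability at least $1-\alpha$, conditionally and therefore also unconditionally, we have $\Delta>\widehat\Delta_n-\tau_{n,\alpha}$. Call this event $\mathcal A$.

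\textbf{Decision cases.} We work on $\mathcal A$.
\begin{itemize}
 \item If the rule rejects, then $Z_{\rm safe}=Z_I$. The inequality $\mathcal E(Z_{\rm safe})\le\mathcal E(Z_I)$ holds with equality, and the approximant $P_{Z_I}g$ is returned unchanged. This is the exact recovery claim, and it requires no probability at all.
 \item If the rule accepts, then $\widehat\Delta_n>\tau_{n,\alpha}$. On $\mathcal A$ this gives $\mathcal E(Z_I)-\mathcal E(Z_A)=\Delta\ge\widehat\Delta_n-\tau_{n,\alpha}>0$, which yields both inequalities in \eqref{eq:safe-transfer-nonregression}.
\end{itemize}
The Riesz-lower-bound condition plays no role in the error inequality. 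It only restricts when acceptance occurs, which can only move the outcome toward the rejection case, where the bound is trivial. Its purpose is coefficient stability via Theorem~\ref{prop:main-successive-stability}. I would remark on this explicitly in the proof.

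\textbf{Main obstacle.} The delicate point is the independence bookkeeping. Hoeffding requires the $X_i$ to be independent with a common mean once $Z_A$ is fixed, and this is valid only because $Z_A$ is built without the marking samples. I would phrase the argument as a conditional probability given $(Z_I,Z_A)$ and then integrate.

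The second check concerns the range constant. The interval length $2M$ gives exactly the stated $\tau_{n,\alpha}$, so no sharper range argument such as $|X_i|\le M$ with a one-sided variant is needed. Since only a one-sided bound is used, no factor of $2$ appears in front of $\alpha$.
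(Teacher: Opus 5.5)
Your proof is correct and follows the paper's four-step argument essentially verbatim. Pythagoras converts the error gap into the captured-energy difference $\Delta$; the bound $0\le J_Z(g)\le M$ places each marking difference in $[-M,M]$; one-sided Hoeffding with $t=\tau_{n,\alpha}$ gives $\Delta\ge\widehat\Delta_n-\tau_{n,\alpha}$ with probability at least $1-\alpha$; and a case split on accept/reject finishes. Your explicit conditioning on $(Z_I,Z_A)$, together with the tacit requirement that the $g_i$ share the law of $g$, only makes precise the independence bookkeeping that the paper leaves implicit.
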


The complete proof is given in Appendix~\ref{app:safe-transfer-proof}.  The
theorem is deliberately a non-regression result for a declared probability
measure, not a claim that one space dominates for every right-hand side.  The
gain comparison can be evaluated without reference solutions: in the coercive
case $g$ is the energy representer of the load, while in a minimum-residual
formulation $g=R_Y^{-1}F$.  The Riesz condition plays a different role from
the statistical margin: it prevents an accepted improvement from being
represented by unstable coefficients.

If the declared load law is a finite quadrature and every quadrature point is
evaluated, $\Delta$ is computed exactly and the margin
$\tau_{n,\alpha}$ is unnecessary.  For a continuously distributed load, the
displayed Hoeffding margin is rigorous but can be conservative.  Consequently,
the small marking batches in the numerical mechanism tests are reported as
prospective gates, not retroactively promoted to population certificates.
Sharper variance-sensitive or deterministic operator-covariance bounds may be
substituted without changing the selection argument.

The two results have complementary roles: quotient stability is an
admissibility gate, while exact gain ranks the surviving directions by their
relevance to the current residual.  They do not determine whether the current
error is small or whether relevant directions are absent from the proposed
families.  That requires an independent global error bound.  The construction
below is stated first for a general variational operator and only later
specialized to familiar flux and graph-residual estimators.

For nonsymmetric or indefinite problems, trial-space innovation alone is not
enough: the discrete test space must also see the new direction. In a
minimum-residual formulation, the Supplementary Material proves that
the exact gain remains a Schur quotient and that stability of an enlarged
trial space $W$ is measured by
\begin{equation}
 \beta_R(W)^2=\lambda_{\min}(H_W^{-1/2}G_WH_W^{-1/2}),
 \label{eq:main-discrete-infsup}
\end{equation}
where $H_W$ is the trial-norm Gram matrix and $G_W$ is the Gram matrix of the
discrete trial-to-test images. Enlarging the test space cannot decrease
$\beta_R(W)$; a Fortin map with norm $C_F$ gives
$\beta_R(W)\geq\beta_{\mathcal B}/C_F$. Thus the innovation calculation also
identifies when a useful trial addition requires paired test enrichment. The
complete statement is given in the Supplementary Material.

The discrete inf--sup constant also supplies the missing bridge from an
operator residual to physical error.  The following enclosure is the standard
triangle-inequality and inf--sup argument used in reliable Galerkin and
minimum-residual analysis \cite{demkowiczgopalakrishnan2011,ciarlet1978}.  It
is stated here because neither a sharp Schur solve nor
a stable coefficient quotient by itself controls error to the continuum
solution.

\begin{theorem}[Discrete-to-continuum enclosure
  {\cite{demkowiczgopalakrishnan2011,ciarlet1978}}]
\label{thm:discrete-continuum-enclosure}
Let $B:X\to Y'$ satisfy
\[
 \beta\|w\|_X\leq \|Bw\|_{Y'}\leq M\|w\|_X,
 \qquad 0<\beta\leq M,
\]
on the relevant error space.  Let $u$ solve $Bu=F$, let $u_h$ be a fixed full
discrete solution, and suppose a computable reliable estimator satisfies
$\|u-u_h\|_X\leq\eta_{\rm disc}$.  For a reduced approximation $v$, suppose
$\underline d_v\leq\|u_h-v\|_X\leq\overline d_v$ has been certified.  Then
\begin{equation}
 \max\{0,\underline d_v-\eta_{\rm disc}\}
 \leq \|u-v\|_X \leq
 \overline d_v+\eta_{\rm disc}.
 \label{eq:continuum-combined-interval}
\end{equation}
If instead only a residual enclosure
$\underline r_v\leq\|F-Bv\|_{Y'}\leq\overline r_v$ is available, valid
endpoints are $\underline r_v/M$ and $\overline r_v/\beta$.  Candidate $v_1$
is therefore certified to improve upon $v_0$ whenever its upper endpoint is
strictly below the lower endpoint for $v_0$.
\end{theorem}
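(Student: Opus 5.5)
The plan is to prove the two bounds in \eqref{eq:continuum-combined-interval} by triangle inequalities routed through $u_h$, and then to obtain the residual version directly from the two-sided bound on $B$. For the upper endpoint, I will write $u-v=(u-u_h)+(u_h-v)$. The triangle inequality then gives
\[
\|u-v\|_X\leq\|u-u_h\|_X+\|u_h-v\|_X\leq\eta_{\rm disc}+\overline d_v .
\]
For the lower endpoint, I will use the reverse triangle inequality:
\[
\|u-v\|_X\geq\|u_h-v\|_X-\|u-u_h\|_X\geq\underline d_v-\eta_{\rm disc}.
\]
Combining this with $\|u-v\|_X\geq0$ yields the $\max\{0,\cdot\}$ form. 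Neither step uses $B$; only the certified enclosures and the reliability hypothesis enter.

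For the residual form, the key identity is $F-Bv=Bu-Bv=B(u-v)$, which holds because $u$ solves $Bu=F$. I then apply the stability hypothesis to $w=u-v$. This is the one place where I must check that $u-v$ lies in the ``relevant error space'' on which the bounds are assumed. I will state this explicitly, since $v$ is a reduced approximation drawn from a subspace of $X$, and the hypothesis is taken to cover such differences. The bounds give
\[
\beta\|u-v\|_X\leq\|F-Bv\|_{Y'}\leq M\|u-v\|_X .
\]
Inserting the enclosure $\underline r_v\leq\|F-Bv\|_{Y'}\leq\overline r_v$ then gives $\underline r_v/M\leq\|u-v\|_X\leq\overline r_v/\beta$, which are the stated endpoints.

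Finally, the certification claim follows by chaining the intervals. Suppose $v_1$ has upper endpoint $U_1$ and $v_0$ has lower endpoint $L_0$, both valid by the preceding steps, with $U_1<L_0$. Then
\[
\|u-v_1\|_X\leq U_1<L_0\leq\|u-v_0\|_X,
\]
so $v_1$ is strictly better. This works whichever enclosure type is used for each candidate, since the two interval types may be mixed.

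No step is a genuine obstacle. The only point needing care is the domain-of-validity issue in the residual version: the inf--sup bound must apply to $u-v$ itself, not merely to discrete differences $u_h-v$. I will handle this by reading the hypothesis as covering all differences between $u$ and admissible approximations.
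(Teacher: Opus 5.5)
Your proposal is correct and matches the paper's route: the paper gives no separate proof and calls this ``the standard triangle-inequality and inf--sup argument,'' which is what you supply. You use the triangle and reverse triangle inequalities through $u_h$ for the first enclosure, the identity $F-Bv=B(u-v)$ with the two-sided bound on $B$ for the residual endpoints, and chaining of disjoint intervals for the certification. Your caveat that the stability bound must apply to $u-v$ itself, and not only to discrete differences, is what the hypothesis ``on the relevant error space'' is meant to cover.
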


\secondarytheoryB
One common way to supply $\eta_{\rm disc}$ is the classical two-level
saturation estimate \cite{melenkwohlmuth2001}; the required assumption must
remain visible.
\begin{proposition}[Two-level saturation estimate
  {\cite{melenkwohlmuth2001}}]
\label{prop:two-level-saturation}
Let $u_h,u_+\in X$ be two discrete approximations.  If a known $q<1$ satisfies
$\|u-u_+\|_X\leq q\|u-u_h\|_X$, then
\begin{equation}
 {\|u_+-u_h\|_X\over1+q}
 \leq\|u-u_h\|_X\leq
 {\|u_+-u_h\|_X\over1-q}.
 \label{eq:two-level-saturation}
\end{equation}
\end{proposition}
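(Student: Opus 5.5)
The argument rests on the triangle inequality together with the saturation assumption $\|u-u_+\|_X\le q\|u-u_h\|_X$. Write $e=u-u_h$ and $e_+=u-u_+$, so that $u_+-u_h=e-e_+$. Both inequalities in \eqref{eq:two-level-saturation} then follow from a one-line estimate applied in each direction.

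For the \emph{upper bound}, start from $e=(u_+-u_h)+e_+$. The triangle inequality gives
\[
\|e\|_X\le\|u_+-u_h\|_X+\|e_+\|_X\le\|u_+-u_h\|_X+q\|e\|_X .
\]
Since $q<1$ and $\|e\|_X$ is finite, the term $q\|e\|_X$ can be moved to the left-hand side. Dividing by $1-q>0$ yields $\|u-u_h\|_X\le\|u_+-u_h\|_X/(1-q)$.

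For the \emph{lower bound}, write $u_+-u_h=e-e_+$. The triangle inequality and the saturation assumption give
\[
\|u_+-u_h\|_X\le\|e\|_X+\|e_+\|_X\le(1+q)\|e\|_X .
\]
Dividing by $1+q>0$ gives the left inequality. No property of $X$ beyond its being a normed space is used, so the statement holds in the stated generality.

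There is no substantive obstacle. The one point needing care is that absorbing $q\|e\|_X$ in the upper bound requires $q<1$ strictly and $\|e\|_X<\infty$; both hold because $u,u_h\in X$. I would also remark that the whole content of the estimate lies in the hypothesis on $q$, which is assumed and not verified here. This matches the paper's stipulation that "the required assumption must remain visible."
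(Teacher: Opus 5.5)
Your proof is correct and is the standard two-sided triangle-inequality argument for this classical estimate; the paper gives no proof of its own and simply attributes the result to \cite{melenkwohlmuth2001}. The only tacit point is that dividing by $1+q$ requires $q>-1$, which is harmless because saturation constants are taken nonnegative, and any $q<0$ would in any case force $u_h=u_+=u$.
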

The proposition is a guarantee only when $q$ is established independently;
estimating it from the same two solutions is not a proof.
\endsecondarytheoryB

The saturation-free principle is not specific to transport.  The following
factorized reconstruction lemma is derived in this paper to isolate the only
property required by the subsequent PDE-specific majorants.  Its ingredients
are Galerkin orthogonality and the Riesz representation theorem; novelty is
claimed only for this reusable formulation and its integration into the
refinement certificate.

\begin{theorem}[Factorized residual majorant]
\label{thm:factorized-residual-majorant}
Let $V$ and $Z$ be Hilbert spaces and let $C:V\to Z$ be an isometry, so that
$\|v\|_V=\|Cv\|_Z$.  Let $R\in V'$ have Riesz representative $z$, and let
$z_h$ be its Galerkin projection onto a finite-dimensional subspace of $V$.
If a computable $g_h\in Z$ satisfies
\begin{equation}
 R(v)-(z_h,v)_V=(g_h,Cv)_Z\qquad(v\in V),
 \label{eq:factorized-residual-reconstruction}
\end{equation}
then
\begin{equation}
 \|z_h\|_V\leq\|R\|_{V'}
 \leq\bigl(\|z_h\|_V^2+\|g_h\|_Z^2\bigr)^{1/2}.
 \label{eq:factorized-residual-majorant}
\end{equation}
More generally, if Eq.~\eqref{eq:factorized-residual-reconstruction} holds
up to a functional of known dual norm $\delta_h$, then $\|g_h\|_Z$ in the
upper endpoint may be replaced by $\|g_h\|_Z+\delta_h$.
\end{theorem}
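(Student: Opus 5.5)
The plan is to work entirely with the Riesz representative $z$ of $R$, so that $\|R\|_{V'}=\|z\|_V$. We use Galerkin orthogonality to split $z$ into $z_h$ and an orthogonal error, and then bound that error using the reconstruction identity~\eqref{eq:factorized-residual-reconstruction} together with the isometry $C$.

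\textbf{Step 1 (lower bound).} Write $V_h$ for the finite-dimensional subspace. Since $z_h$ is the Galerkin projection of $z$ onto $V_h$, we have $(z-z_h,v_h)_V=0$ for all $v_h\in V_h$. This is the Pythagorean identity
\[
 \|z\|_V^2=\|z_h\|_V^2+\|z-z_h\|_V^2 .
\]
The left inequality in~\eqref{eq:factorized-residual-majorant} follows at once, and the upper bound reduces to showing $\|z-z_h\|_V\le\|g_h\|_Z$.

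\textbf{Step 2 (error bound).} For any $v\in V$ we have $R(v)-(z_h,v)_V=(z-z_h,v)_V$. By~\eqref{eq:factorized-residual-reconstruction}, this equals $(g_h,Cv)_Z$. Applying Cauchy--Schwarz in $Z$ and then the isometry property gives
\[
 |(z-z_h,v)_V|\le\|g_h\|_Z\|Cv\|_Z=\|g_h\|_Z\|v\|_V .
\]
Taking $v=z-z_h$ yields $\|z-z_h\|_V\le\|g_h\|_Z$. Combined with Step 1, this gives the upper endpoint.

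\textbf{Step 3 (perturbed version).} If the identity holds only up to a functional $e$ with $\|e\|_{V'}\le\delta_h$, the same test with $v=z-z_h$ gives
\[
 \|z-z_h\|_V^2\le(\|g_h\|_Z+\delta_h)\|z-z_h\|_V ,
\]
hence $\|z-z_h\|_V\le\|g_h\|_Z+\delta_h$, and Pythagoras gives the stated replacement.

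There is no real obstacle here. The only point requiring care is to test with the error $z-z_h$ itself, rather than bounding $\|R\|_{V'}$ directly by a triangle inequality. A triangle inequality would give only the weaker bound $\|z_h\|_V+\|g_h\|_Z$, whereas testing with the error lets the orthogonal splitting produce the sharper quadrature sum. We also need that $C$ is only an isometry into $Z$, not onto $Z$, so that Cauchy--Schwarz is used in one direction only. This suffices, since surjectivity is never needed.
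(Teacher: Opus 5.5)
Your proposal is correct and follows the paper's own proof essentially step for step. Galerkin orthogonality gives the Pythagorean splitting $\|R\|_{V'}^2=\|z_h\|_V^2+\|z-z_h\|_V^2$, the reconstruction identity with Cauchy--Schwarz and the isometry gives $\|z-z_h\|_V\leq\|g_h\|_Z$, and the perturbed case adds $\delta_h$; the paper phrases that last step as a triangle inequality in $V'$, which is exactly what your test with $v=z-z_h$ carries out.
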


\begin{proof}
Galerkin orthogonality gives
$\|R\|_{V'}^2=\|z_h\|_V^2+\|z-z_h\|_V^2$.
Equation~\eqref{eq:factorized-residual-reconstruction} and Cauchy--Schwarz
give $\|z-z_h\|_V\leq\|g_h\|_Z$.  Combining the two identities proves
Eq.~\eqref{eq:factorized-residual-majorant}.  The perturbed statement follows
from the triangle inequality in $V'$.
\end{proof}

This theorem separates the reusable analysis from the PDE-dependent
reconstruction.  For coercive diffusion, $C$ may be the gradient and $g_h$ an
equilibrated flux.  For first-order minimum-residual formulations,
$Cv=(v,A^*v)$ is the graph embedding.  Trefftz, wave, and mixed formulations
may use their corresponding operator graph or product-space factorization.
Only the construction of $g_h$ changes; the Schur--Riesz refinement and the
error logic do not.

\secondarytheoryC
For transport, saturation can instead be avoided by reconstructing the
unresolved graph residual.  Let
$A^*v=c,v-\boldsymbol\beta\cdot\nabla v$, where
$c=\gamma-\nabla\cdot\boldsymbol\beta$, and equip the broken test space with
$\|v\|_V^2=\|v\|^2+\|A^*v\|^2$.

The next majorant is the paper's transport-specific specialization of the
preceding factorized reconstruction lemma.  Its integration-by-parts lifting
and saturation-free bound are derived here; no general novelty is claimed for
graph norms or residual reconstruction themselves
\cite{demkowiczgopalakrishnan2011}.

\begin{theorem}[Saturation-free graph-residual majorant]
\label{thm:graph-residual-majorant}
Suppose a residual functional has the cellwise representation
\[
 R(v)=(p,v)+(q,A^*v)+\langle\lambda,v\rangle_{\partial\mathcal T}.
\]
Let $z_h$ be any computed approximation to its $V$-Riesz representative.
For every cell $K$, choose a lifting $\ell_K$ satisfying
\[
 -(\boldsymbol\beta\cdot n)\ell_K=\lambda
 \quad\hbox{on }\partial K,
\]
with the equality imposed only on faces carrying the trace residual, and set
\[
 a_{\lambda,K}=-c\ell_K-\nabla\cdot(\boldsymbol\beta\ell_K).
\]
Then, with broken fields
\[
 a=p-z_h+a_\lambda,
 \qquad b=q-A^*z_h+\ell,
\]
the exact dual residual satisfies
\begin{equation}
 \|R\|_{V'}^2
 \leq \|z_h\|_V^2+\|a\|^2+\|b\|^2,
 \label{eq:graph-residual-majorant}
\end{equation}
provided $z_h$ is the Galerkin Riesz projection.  Without Galerkin
orthogonality, the always-valid bound is
$\|R\|_{V'}\leq\|z_h\|_V+(\|a\|^2+\|b\|^2)^{1/2}$.
Consequently, if the continuous inf--sup constant is bounded below by
$\beta_{\mathcal B}>0$, the right side of
Eq.~\eqref{eq:graph-residual-majorant}, divided by
$\beta_{\mathcal B}^2$, is a saturation-free upper bound for the squared
trial-space error.
\end{theorem}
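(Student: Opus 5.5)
The plan is to reduce the statement to the factorized residual majorant, Theorem~\ref{thm:factorized-residual-majorant}. We take $Z=L^2\times L^2$ (cellwise, broken) and the graph embedding $Cv=(v,A^*v)$. By the definition $\|v\|_V^2=\|v\|^2+\|A^*v\|^2$ this map is an isometry. It then suffices to produce a computable $g_h\in Z$ satisfying the reconstruction identity \eqref{eq:factorized-residual-reconstruction}. The candidate is $g_h=(a,b)$ with $a$ and $b$ as in the statement, so the core of the proof is the identity
\[
 R(v)-(z_h,v)_V=(a,v)+(b,A^*v)\qquad(v\in V).
\]

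To verify this identity, expand $(z_h,v)_V=(z_h,v)+(A^*z_h,A^*v)$. The volume terms of $R$ then already account for $(p-z_h,v)+(q-A^*z_h,A^*v)$. What remains is to rewrite the trace term $\langle\lambda,v\rangle_{\partial K}$ cellwise as a volume contribution.

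Fix a cell $K$. Since $A^*v=cv-\boldsymbol\beta\cdot\nabla v$, integrating by parts gives
\[
 (\ell_K,A^*v)_K=(c\ell_K,v)_K+(\nabla\cdot(\boldsymbol\beta\ell_K),v)_K-\int_{\partial K}(\boldsymbol\beta\cdot n)\ell_K v .
\]
Using the lifting condition $-(\boldsymbol\beta\cdot n)\ell_K=\lambda$, this becomes
\[
 \langle\lambda,v\rangle_{\partial K}=(\ell_K,A^*v)_K+(a_{\lambda,K},v)_K,
\]
with $a_{\lambda,K}=-c\ell_K-\nabla\cdot(\boldsymbol\beta\ell_K)$. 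Summing over cells yields exactly $a=p-z_h+a_\lambda$ and $b=q-A^*z_h+\ell$.

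I expect the main obstacle to be the faces that carry no trace residual. On those faces the boundary integral must also vanish. I would therefore read the lifting condition as prescribing the inflow/outflow flux $(\boldsymbol\beta\cdot n)\ell_K$ on all of $\partial K$, with the value $0$ wherever $\lambda=0$. The other delicate point is the regularity needed for the cellwise integration by parts, namely $\ell_K$ in the graph space of $\boldsymbol\beta$ on $K$. Both should be stated as requirements on the admissible liftings.

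With the identity in hand, the Galerkin case follows directly. If $z_h$ is the Galerkin Riesz projection, Theorem~\ref{thm:factorized-residual-majorant} gives
\[
 \|R\|_{V'}^2\le\|z_h\|_V^2+\|g_h\|_Z^2=\|z_h\|_V^2+\|a\|^2+\|b\|^2,
\]
which is \eqref{eq:graph-residual-majorant}.

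Without Galerkin orthogonality, the Pythagorean split is unavailable. Instead we use the triangle inequality in $V'$:
\[
 \|R\|_{V'}\le\|(z_h,\cdot)_V\|_{V'}+\|R-(z_h,\cdot)_V\|_{V'}.
\]
The first term equals $\|z_h\|_V$. The second, by the identity and Cauchy--Schwarz applied through the isometry $C$, is at most $(\|a\|^2+\|b\|^2)^{1/2}$.

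Finally, take $R$ to be the residual $F-Bu_h$. Then $\beta_{\mathcal B}\|u-u_h\|\le\|R\|_{V'}$, by the inf--sup bound recorded in Theorem~\ref{thm:minres-operator-covariance}. Dividing the squared majorant by $\beta_{\mathcal B}^2$ gives the saturation-free error bound. It is saturation-free because no comparison with a richer discrete solution was used.
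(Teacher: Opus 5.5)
Your proposal is correct and follows essentially the same route as the paper. Both arguments use the cellwise integration by parts that turns $\langle\lambda,v\rangle_{\partial\mathcal T}$ into $(a_\lambda,v)+(\ell,A^*v)$, then Cauchy--Schwarz in $L^2\times L^2$, Pythagoras under Galerkin orthogonality (which you package via Theorem~\ref{thm:factorized-residual-majorant} with the isometry $Cv=(v,A^*v)$, the specialization the paper itself announces), the triangle inequality otherwise, and the inf--sup bound; your requirement that $(\boldsymbol\beta\cdot n)\ell_K$ vanish on faces where $\lambda$ does is exactly what the paper's identity $-\langle(\boldsymbol\beta\cdot n)\ell,v\rangle_{\partial\mathcal T}=\langle\lambda,v\rangle_{\partial\mathcal T}$ tacitly assumes.
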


\begin{proof}
Cellwise integration by parts gives
\[
 (a_\lambda,v)+(\ell,A^*v)
 =-\langle(\boldsymbol\beta\cdot n)\ell,v\rangle_{\partial\mathcal T}
 =\langle\lambda,v\rangle_{\partial\mathcal T}.
\]
Hence the unresolved functional $R-(z_h,\cdot)_V$ equals
$(a,\cdot)+(b,A^*\cdot)$.  Cauchy--Schwarz in the product space
$L^2\times L^2$ bounds its dual norm by
$(\|a\|^2+\|b\|^2)^{1/2}$.  Galerkin orthogonality makes the computed and
unresolved Riesz representatives orthogonal, so Pythagoras proves
Eq.~\eqref{eq:graph-residual-majorant}.  The triangle inequality proves the
version without orthogonality, and the continuous inf--sup inequality gives
the final statement.
\end{proof}

The following conversion is the classical continuous inf--sup estimate and is
stated only to identify the norm controlled by the certificate
\cite{demkowiczgopalakrishnan2011}.
\begin{corollary}[Physical-norm conversion
  {\cite{demkowiczgopalakrishnan2011}}]
\label{cor:physical-norm-conversion}
Let $e=u-u_h\in X$ and suppose the continuous operator satisfies
$\beta_X\|w\|_X\leq\|Bw\|_{Y'}$ for every $w$ in the relevant error
space, with an independently established $\beta_X>0$.  If
$\overline\eta_G$ encloses the graph-residual norm furnished by
Theorem~\ref{thm:factorized-residual-majorant}, then
\begin{equation}
 \|u-u_h\|_X\leq {\overline\eta_G\over\beta_X}.
 \label{eq:physical-norm-conversion}
\end{equation}
For the induced energy norm $\|w\|_E:=\|Bw\|_{Y'}$, the constant is exactly
$\beta_E=1$.  For an $L^2$, trace, or other physical norm, the corresponding
continuous lower bound is indispensable; a discrete singular value is not,
by itself, a certified substitute.
\end{corollary}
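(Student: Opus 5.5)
The corollary is the continuous inf--sup inequality applied to the error $e=u-u_h$, combined with the identification of the residual norm. The chain is
\[
 \beta_X\|u-u_h\|_X\leq\|B(u-u_h)\|_{Y'}=\|F-Bu_h\|_{Y'}\leq\overline\eta_G .
\]

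\textbf{Step 1: residual identity.} Since $Bu=F$, linearity of $B$ gives $B(u-u_h)=F-Bu_h$. This is exactly the residual functional $R$ whose dual norm Theorem~\ref{thm:factorized-residual-majorant} (or its transport form, Theorem~\ref{thm:graph-residual-majorant}) bounds. The point to check is that the graph-residual norm enclosed by $\overline\eta_G$ is the $Y'$-norm of $F-Bu_h$, i.e.\ that the test norm $\|\cdot\|_V$ used in the majorant is the norm defining $Y'$. I take this as part of the hypothesis "encloses the graph-residual norm".

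\textbf{Step 2: inf--sup.} The error $e$ lies in the relevant error space, so the assumed lower bound gives $\beta_X\|e\|_X\leq\|Be\|_{Y'}$. Dividing by $\beta_X>0$ and using Step~1 yields Eq.~\eqref{eq:physical-norm-conversion}.

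\textbf{Step 3: energy norm.} For $\|w\|_E:=\|Bw\|_{Y'}$ the inf--sup inequality holds with equality by definition, so $\beta_E=1$. The sharpness claim follows by taking any $w\neq0$, for which equality is attained.

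\textbf{Step 4: role of the continuous constant.} A discrete singular value such as $\beta_R(W)$ in Eq.~\eqref{eq:main-discrete-infsup} only bounds $B$ below on the discrete trial space $W$. The error $e$ generally does not lie in $W$, so that constant cannot replace $\beta_X$. This remark needs no further proof. The only real obstacle is the norm-compatibility check in Step~1; everything else is one line.
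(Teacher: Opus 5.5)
Your proof is correct and is exactly the classical continuous inf--sup argument the paper relies on; the paper gives no separate proof. The chain is $\beta_X\|e\|_X\le\|Be\|_{Y'}=\|F-Bu_h\|_{Y'}\le\overline\eta_G$, with $\beta_E=1$ immediate from the definition of the induced norm. You are also right that the only substantive point is that the test norm used in the majorant must be the norm defining $Y'$; the paper likewise builds this into the hypothesis that $\overline\eta_G$ encloses the graph-residual norm.
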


The constant in Corollary~\ref{cor:physical-norm-conversion} need not be an
unknown calibration parameter.  It can be built into the test norm or bounded
through the adjoint problem.

The adjoint-lifting and optimal-test-norm statement below is standard DPG
stability theory; it is stated without proof to make the required calibration
assumption explicit.
\begin{theorem}[Adjoint-lifting inf--sup calibration
  {\cite{demkowiczgopalakrishnan2011}}]
\label{thm:adjoint-lifting-infsup}
Let $R_X:X\to X'$ be the Riesz map and suppose that, for every $x\in X$, an
adjoint lifting $Qx\in Y$ satisfies
\begin{equation}
 B^*Qx=R_Xx,
 \qquad \|Qx\|_Y\leq C_Q\|x\|_X.
 \label{eq:adjoint-lifting}
\end{equation}
Then the continuous inf--sup constant of $B:X\to Y'$ is at least $C_Q^{-1}$.
If $Y$ is equipped with the ideal physical test norm
$\|v\|_{Y,\mathrm{opt}}:=\|R_X^{-1}B^*v\|_X$, after quotienting
$\ker B^*$, then $C_Q=1$ and the inf--sup constant is exactly one.

For the adjoint graph norm
$\|v\|_Y^2=\|A^*v\|_{L^2}^2+\|v\|_{L^2}^2$, if $A^*$ has a right inverse
$Q_0$ with $\|Q_0x\|_{L^2}\leq C_0\|x\|_{L^2}$, then
\begin{equation}
 \beta_{L^2}\geq(1+C_0^2)^{-1/2}.
 \label{eq:graph-to-l2-infsup}
\end{equation}
\end{theorem}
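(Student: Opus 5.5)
The first claim reduces to the definition of the continuous inf--sup constant, written as a supremum over test functions, together with the lifting in Eq.~\eqref{eq:adjoint-lifting}. Fix $x\in X$, $x\neq0$, and take $v=Qx$ as the test function. Then
\[
 \langle Bx,Qx\rangle=\langle x,B^*Qx\rangle=\langle R_Xx,x\rangle=\|x\|_X^2 .
\]
It follows that
\[
 \|Bx\|_{Y'}\geq \frac{\langle Bx,Qx\rangle}{\|Qx\|_Y}\geq \frac{\|x\|_X^2}{C_Q\|x\|_X}=C_Q^{-1}\|x\|_X .
\]
The case $Qx=0$ cannot occur, because $B^*Qx=R_Xx\neq0$. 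Taking the infimum over $x$ gives $\beta\geq C_Q^{-1}$.

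For the optimal test norm, I would first check that $\|\cdot\|_{Y,\mathrm{opt}}$ is a genuine norm on $Y/\ker B^*$. The same lifting then satisfies
\[
 \|Qx\|_{Y,\mathrm{opt}}=\|R_X^{-1}B^*Qx\|_X=\|x\|_X ,
\]
so $C_Q=1$ and $\beta\geq1$. For the reverse inequality, for any $v$,
\[
 \langle Bx,v\rangle=\langle x,B^*v\rangle_{X\times X'}=(x,R_X^{-1}B^*v)_X\leq\|x\|_X\|v\|_{Y,\mathrm{opt}} ,
\]
so $\|Bx\|_{Y'}\leq\|x\|_X$ and the constant is exactly one. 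The one point that needs care here is existence of the lifting: it is given as a hypothesis of the theorem, namely surjectivity of $B^*$ onto $X'$. Passing to the quotient by $\ker B^*$ changes neither the dual norm nor the pairing, since $B$ annihilates nothing relevant there. I therefore expect this part to be bookkeeping rather than a real obstacle.

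For Eq.~\eqref{eq:graph-to-l2-infsup}, take $X=L^2$ with $R_X$ the identity. I would set $Qx=Q_0x$, which satisfies $A^*Q_0x=x$; this is the adjoint equation $B^*Qx=R_Xx$ in the graph formulation $\langle Bw,v\rangle=(w,A^*v)$. Then
\[
 \|Q_0x\|_Y^2=\|A^*Q_0x\|^2+\|Q_0x\|^2\leq(1+C_0^2)\|x\|^2 ,
\]
so $C_Q\leq(1+C_0^2)^{1/2}$, and the first part gives the stated bound. The main thing to verify is that the right inverse lands in the broken test space with the required boundary conditions, so that it is admissible in $Y$. I would state this as part of the meaning of ``right inverse'' on the relevant space.
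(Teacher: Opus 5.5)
Your proof is correct. The paper states this theorem without proof, deferring to standard DPG stability theory. Your argument is exactly the standard one behind that citation: test $Bx$ against the lifting $Qx$ to get $\langle Bx,Qx\rangle=\|x\|_X^2$, then specialize to $\|Qx\|_{Y,\mathrm{opt}}=\|x\|_X$ (the Cauchy--Schwarz upper bound gives equality) and to $\|Q_0x\|_Y^2\le(1+C_0^2)\|x\|_{L^2}^2$.

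One justification needs correcting. Quotienting by $\ker B^*$ is harmless because each functional $Bx$ vanishes on $\ker B^*$, since $\langle Bx,v\rangle=\langle x,B^*v\rangle=0$ there. The reason is not that ``$B$ annihilates nothing relevant.''
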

For large noncoercive discretizations, repeatedly applying the ideal physical
test norm may be as expensive as solving the full PDE.  We therefore use an
independently randomized Petrov approximation.  Draw isotropic probes $s_j$
and solve $B^*z_j=s_j$.  Since
\begin{equation}
 z_j^*(Bw-f)=s_j^*(w-u),
 \label{eq:adjoint-petrov-sketch}
\end{equation}
least squares in the functionals $z_j^*(Bw-f)$ sketches physical solution
error rather than the generally misaligned raw residual.  Separate probe
families select the space and fit its reduced coefficients.  The construction
uses only the operator and declared loads; reference solutions are reserved
for the final audit.  Standard subspace-embedding estimates quantify sampling
error, while the quotient--Riesz gate removes redundant or unstable sketched
directions.
This result gives three operational choices.  A global optimal test norm
gives the desired physical error with constant one.  A localizable graph norm
gives the explicit conversion in Eq.~\eqref{eq:graph-to-l2-infsup}.  Between
them, any spectrally equivalent approximation to the optimal Riesz map gives
a certified constant from its norm-equivalence bounds.  Those bounds can be
verified by the same quotient and complementary-Schur estimates used for
candidate blocks, so the calibration does not require a raw smallest
singular value of the full discretization.

For divergence-free transport whose backward characteristics meet the
adjoint boundary within time $T_{\max}$, the characteristic Volterra operator
satisfies $C_0\leq2T_{\max}/\pi$.  Hence
\begin{equation}
 \beta_{L^2}\geq
 \left[1+\left({2T_{\max}\over\pi}\right)^2\right]^{-1/2}.
 \label{eq:transport-explicit-infsup}
\end{equation}
This bound uses only the prescribed velocity and domain.  It remains valid
without a discrete reference solution, although a worst-case residence time
can be pessimistic.  Verified characteristic enclosures or a
Schur-certified approximation of the optimal test Riesz map sharpen it while
preserving a continuum guarantee.

The lifting is not an oracle: it depends only on the assembled trace residual
and the prescribed transport field.  Minimizing $\|a\|^2+\|b\|^2$ over an
$H(\operatorname{div};K)$ lifting space gives a small independent coercive
problem on each cell.  Any admissible lifting is reliable; richer lifting
spaces improve effectivity without changing validity.

For an ideal minimum-residual method, $\beta=M=1$ in the induced energy norm.
For practical DPG, a Fortin operator gives a computable lower bound for
$\beta$, while $M$ follows from continuity of the bilinear form.  A standard
residual-reliability route is available in the DPG setting
\cite{demkowiczgopalakrishnan2011}.  In the notation used here, it takes the
form
\[
 \eta_{\rm disc}={C_F\over\beta_{\mathcal B}}
 \bigl(\eta_{V^+}+\operatorname{osc}_h\bigr),
\]
provided the enriched-test residual $\eta_{V^+}$ plus data oscillation
$\operatorname{osc}_h$ is a certified upper bound for the full dual residual.
Without a Fortin, localization, or saturation argument, an enriched residual
is only an indicator and must not be presented as a continuum guarantee.
\endsecondarytheoryC

Theorems~\ref{thm:discrete-continuum-enclosure} and
\ref{thm:factorized-residual-majorant} provide the main error-control bridge:
the first combines discretization and reduced-space errors, while the second
gives a reusable saturation-free majorant once the unresolved functional is
reconstructed.  The two-level alternative, transport graph reconstruction,
physical-norm conversion, and adjoint calibration are supporting error-control
results provided in Appendix~\ref{app:secondary-theory}.

\section{Safe automatic transfer and adaptive selection}
\label{sec:main-certificate}
The preceding analysis ranks an exact candidate inside an exact variational
problem.  An adaptive method must additionally decide from computed quantities
whether the current error is acceptable and whether a proposed block remains
beneficial after candidate, quadrature, and algebraic errors are included.
This section supplies that prospective error-control layer and then assembles
it into the adaptive algorithm.
\subsection{The central prospective certificate}
We first isolate the acceptance decision in the paper's main result.  Let $X$ and $Y$ be the trial and test
Hilbert spaces, let $T=R_Y^{-1}B:X\to Y$ be the trial-to-test operator, and let
$r_m=g-Tu_m$ be the current minimum residual. Set $Z_m=T(V_m)$. An ideal operator-derived block
$C:E_C\to X$ is available only through a computed approximation
$\widehat C$. Write $\mathcal C=C(E_C)$ and
$\widehat{\mathcal C}=\widehat C(E_C)$. After removing the current operator
image, set
\[
 Q=(I-P_{Z_m}^Y)(T\circ C),
 \qquad \widehat Q=(I-P_{Z_m}^Y)(T\circ\widehat C).
\]
The following prospective construction-error estimate is new to this paper;
unlike the classical exact-space identities above, it compares an ideal block
with the block that is actually assembled before the enlarged solve.

\begin{theorem}[Prospective operator-coordinate enrichment certificate]
\label{thm:prospective-certificate}
Work on the quotient by $\ker Q$.  Suppose
\[
 \sigma_{\min}(Q)\geq\sqrt{A_C}>0,
 \qquad
 \|\widehat Q-Q\|\leq\varepsilon_C<\sqrt{A_C}.
\]
Let $G_C$ be the exact squared residual reduction obtained by the ideal block
and put $\gamma=G_C/\|r_m\|_Y^2$.  Then the computed block is stable, with
\[
 \sigma_{\min}(\widehat Q)\geq\sqrt{A_C}-\varepsilon_C,
\]
and its enriched residual satisfies
\begin{equation}
 \frac{\displaystyle
 \min_{w\in V_m+\widehat{\mathcal C}}\|g-Tw\|_Y}
 {\|r_m\|_Y}
 \leq
 \rho_{\rm cert}:=
 \sqrt{1-\gamma}
 +\frac{\varepsilon_C}{\sqrt{A_C}}\sqrt{\gamma}.
 \label{eq:main-prospective-certificate}
\end{equation}
In particular, $\rho_{\rm cert}<1$ guarantees improvement before the computed
enriched problem is solved.  If the continuous operator has inf--sup constant
$\beta>0$, division by $\beta$ converts the residual estimate into an
$X$-error estimate.
\end{theorem}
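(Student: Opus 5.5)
The proof has two parts: a perturbation bound on the smallest singular value, and an explicit feasible competitor in the enriched space.

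\emph{Stability.} Weyl's inequality for singular values on the quotient coefficient space gives
\[
\sigma_{\min}(\widehat Q)\ge\sigma_{\min}(Q)-\|\widehat Q-Q\|\ge\sqrt{A_C}-\varepsilon_C>0 .
\]
So $\widehat Q$ is injective on the quotient, and the computed block contributes a stable innovation. I will state this with the same quotient convention used in Theorem~\ref{prop:main-successive-stability}.

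\emph{Reduction to the innovation space.} Since $Z_m=T(V_m)$, we have
\[
\min_{w\in V_m+\widehat{\mathcal C}}\|g-Tw\|_Y=\min_{z}\|r_m-\widehat Qz\|_Y .
\]
Indeed, $T\widehat C z=\widehat Qz+P_{Z_m}T\widehat Cz$, the $Z_m$-part can be absorbed into the $V_m$ coefficient, and $r_m\perp Z_m$. The same reduction for the ideal block gives
\[
\min_z\|r_m-Qz\|_Y^2=\|r_m\|^2-G_C .
\]
This is the identity of Theorem~\ref{thm:main-exact-gain} in its minimum-residual form, with $G_C=\langle b,D^\dagger b\rangle$ and $b=Q^*r_m$. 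The optimal ideal coefficient is $z^\star=D^\dagger Q^*r_m$, with $\|r_m-Qz^\star\|=\sqrt{1-\gamma}\,\|r_m\|$.

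\emph{Competitor and estimate.} Rather than solving the perturbed problem, I insert $z^\star$ into the computed problem and use the triangle inequality:
\[
\min_z\|r_m-\widehat Qz\|\le\|r_m-Qz^\star\|+\|(\widehat Q-Q)z^\star\|\le\sqrt{1-\gamma}\,\|r_m\|+\varepsilon_C\|z^\star\| .
\]
It remains to bound $\|z^\star\|$, which is the step requiring care. Because $Qz^\star=P_{\mathcal I_C}r_m$ is the orthogonal projection of $r_m$ onto the innovation space, $\|Qz^\star\|^2=G_C$. Since $z^\star$ lies in the quotient (range of $D^\dagger$), $\|Qz^\star\|\ge\sqrt{A_C}\,\|z^\star\|$. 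Together these give
\[
\|z^\star\|\le\sqrt{G_C/A_C}=\sqrt{\gamma}\,\|r_m\|/\sqrt{A_C}.
\]
Substituting and dividing by $\|r_m\|$ yields exactly $\rho_{\rm cert}$. Note that the bound uses the ideal lower bound $A_C$, not the perturbed one; this is why only $\varepsilon_C<\sqrt{A_C}$ is needed, and that hypothesis matters only for the stability claim.

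\emph{Conclusion.} If $\rho_{\rm cert}<1$, the enriched residual is strictly smaller than $\|r_m\|$, and everything on the right-hand side is known before the enriched solve. The $X$-error statement follows from $\beta\|u-w\|_X\le\|B(u-w)\|_{Y^*}=\|g-Tw\|_Y$, since $R_Y$ is an isometry, as in Theorem~\ref{thm:minres-operator-covariance}.
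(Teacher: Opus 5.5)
Your proof is correct and follows essentially the same route as the paper's: a Weyl-type bound transfers $\sigma_{\min}(Q)\geq\sqrt{A_C}$ to $\widehat Q$; you then insert the ideal optimal coefficient $z^\star=D^\dagger Q^*r_m$ into the computed problem, and the triangle inequality separates the ideal reduction $\sqrt{1-\gamma}\,\|r_m\|_Y$ from the construction error $\varepsilon_C\|z^\star\|\leq\varepsilon_C\sqrt{G_C/A_C}$. Your remark that the residual bound never uses $\varepsilon_C<\sqrt{A_C}$ (only the stability claim does) is accurate, and reading ``the quotient by $\ker Q$'' as restricting $\widehat Q$ to $(\ker Q)^\perp$ is the right way to make the Weyl step well defined.
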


The proof first transfers the lower singular-value bound from $Q$ to
$\widehat Q$, then tests the computed space with the coefficient of the best
ideal correction. The triangle inequality separates ideal residual reduction
from construction error.  The Supplementary Material gives the
complete seven-step argument and the final inf--sup conversion.

The three quantities have noninterchangeable meanings.  The fraction
$\gamma$ measures task relevance of the ideal block; $A_C$ measures stable
functional novelty after the current space and exact redundancies are
removed; and $\varepsilon_C$ measures loss caused by numerical construction
of the operator coordinate.  A large candidate family can therefore be rejected
because its weak tail makes $A_C$ small, even when that tail adds a little
ideal gain.  Conversely, refining the coordinate can reduce
$\varepsilon_C$ and turn the same stable block from uncertified to certified.
This is the mechanism tested prospectively in Section~\ref{sec:main-numerics}.

\subsection{Variational discovery of compositional candidates}
The certificate above evaluates a supplied block; it does not yet explain how
to construct one.  For composition-generated candidates, the corresponding
proposal rule follows from differentiating the same variational gain.  Let
$\Theta\subset\R^p$ parameterize a
family $C_\theta:E_C\to X$ generated by successive maps
\[
 h_0=x,\qquad h_\ell=\Phi_\ell(h_{\ell-1};\theta_\ell),
 \qquad C_\theta e=\Psi(h_L,e).
\]
This includes transported profiles and nested local maps as well as ordinary
fixed functions.  On the frozen current space define
\[
 Q_\theta=(I-P_{Z_m}^Y)(T\circ C_\theta),\quad
 D_\theta=Q_\theta^*Q_\theta,\quad
 b_\theta=Q_\theta^*r_m,\quad
 G(\theta)=\langle b_\theta,D_\theta^\dagger b_\theta\rangle.
\]
Thus the discovery objective is not coefficient size or correlation with an
unprojected residual.  It is the exact residual reduction of the new
functional subspace after old content and exact redundancy have been removed.

The derivative below is the classical variable-projection/envelope formula
\cite{golubpereyra1973} specialized to the quotient variational objective.
What is specific here is the projection of the differentiated range through
$(I-P_{Z_m}^Y)T$ and its coupling to the later stability certificate.

\begin{theorem}[Variable projection in quotient
  coordinates {\cite{golubpereyra1973}}]
\label{thm:compositional-gain-calculus}
Suppose $Q_\theta$ is continuously differentiable and has constant quotient
rank near $\theta$.  Let
\[
 z_\theta=D_\theta^\dagger b_\theta,\qquad
 e_\theta=r_m-Q_\theta z_\theta.
\]
For every parameter variation $\dot\theta$,
\begin{equation}
 DG(\theta)[\dot\theta]
 =2\langle e_\theta,
          DQ_\theta[\dot\theta]z_\theta\rangle_Y.
 \label{eq:compositional-gain-gradient}
\end{equation}
The value $G(\theta)$ is unchanged under every invertible reparameterization
$Q_\theta\mapsto Q_\theta R_\theta$ of the same candidate range.  Moreover,
$DQ_\theta[\dot\theta]$ is obtained by the ordinary chain rule through the
maps $\Phi_1,\ldots,\Phi_L$, followed by the fixed operator projection
$(I-P_{Z_m}^Y)T$.
\end{theorem}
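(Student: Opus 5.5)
The plan is to recognize $G(\theta)$ as the squared norm of the orthogonal projection of $r_m$ onto the range of $Q_\theta$, and then to differentiate that projection. Since $r_m$ is the minimum residual on $V_m$, it lies in $Z_m^{\perp}$, and $Q_\theta$ maps into $Z_m^\perp$. Let $\Pi_\theta=Q_\theta D_\theta^\dagger Q_\theta^*$, which is the $Y$-orthogonal projector onto $\operatorname{ran}Q_\theta$. Then
\[
 G(\theta)=\langle Q_\theta^*r_m,D_\theta^\dagger Q_\theta^*r_m\rangle
 =\langle r_m,\Pi_\theta r_m\rangle_Y=\|\Pi_\theta r_m\|_Y^2,
\]
with $Q_\theta z_\theta=\Pi_\theta r_m$ and $e_\theta=(I-\Pi_\theta)r_m$.

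Invariance comes first and is immediate. If $R_\theta$ is invertible on the quotient coefficient space, then $Q_\theta R_\theta$ has the same range as $Q_\theta$. The orthogonal projector depends only on the range, so $\Pi_\theta$, and hence $G$, is unchanged. One can also check this algebraically: $(R^*DR)^\dagger=R^{-1}D^\dagger R^{-*}$ on the quotient, and substituting gives the same value.

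For the derivative I will use the envelope argument rather than differentiating the pseudoinverse directly. Define
\[
 \phi(\theta,z)=\|r_m\|^2-\|r_m-Q_\theta z\|_Y^2 .
\]
Then $G(\theta)=\max_z\phi(\theta,z)$, and the maximizer is $z_\theta$, the minimum-norm least-squares solution. The partial derivative in $\theta$ at fixed $z$ is
\[
 \partial_\theta\phi[\dot\theta]=2\langle r_m-Q_\theta z,DQ_\theta[\dot\theta]z\rangle_Y .
\]
The partial derivative in $z$ vanishes at $z_\theta$ by the normal equations $Q_\theta^*e_\theta=0$. The chain rule then gives \eqref{eq:compositional-gain-gradient}, provided $\theta\mapsto z_\theta$ is differentiable.

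That differentiability is the main obstacle. Here the constant-rank hypothesis is essential: it is the classical condition (Golub--Pereyra) under which $\theta\mapsto D_\theta^\dagger$, and hence $\Pi_\theta$, is $C^1$. Without it the pseudoinverse jumps. I will therefore cite the constant-rank differentiability of Moore--Penrose inverses.

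As a cross-check, or as an alternative route that avoids $z_\theta$, I will use the projector derivative formula
\[
 \dot\Pi=(I-\Pi)\dot Q Q^\dagger+\bigl((I-\Pi)\dot QQ^\dagger\bigr)^*,
\]
valid under constant rank. It gives $DG=2\langle r_m,(I-\Pi)\dot QQ^\dagger r_m\rangle=2\langle e_\theta,\dot Q z_\theta\rangle$, because $Q^\dagger r_m=z_\theta$.

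Finally, $Q_\theta=(I-P^Y_{Z_m})T\circ C_\theta$. Here $(I-P^Y_{Z_m})T$ is a fixed bounded linear map, independent of $\theta$, so
\[
 DQ_\theta[\dot\theta]=(I-P^Y_{Z_m})T\circ DC_\theta[\dot\theta].
\]
The derivative $DC_\theta[\dot\theta]e$ is computed by the ordinary chain rule through $h_\ell=\Phi_\ell(h_{\ell-1};\theta_\ell)$ and $\Psi$, assuming each of these maps is $C^1$. This establishes the last claim of the theorem.
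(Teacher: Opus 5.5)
Your proposal is correct and follows the route the paper relies on. The paper gives no separate proof and cites the classical Golub--Pereyra variable-projection/envelope formula, which is what you reproduce via $G(\theta)=\|\Pi_\theta r_m\|_Y^2$, the normal equations $Q_\theta^*e_\theta=0$, constant-rank differentiability of the pseudoinverse, and the observation that $(I-P_{Z_m}^Y)T$ is fixed. (For a complex $Y$, as in the Helmholtz applications, the derivative of the real-valued $G$ should read $2\operatorname{Re}\langle e_\theta,DQ_\theta[\dot\theta]z_\theta\rangle_Y$, which your envelope computation gives directly.)
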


Equation~\eqref{eq:compositional-gain-gradient} is an adjoint or
backpropagation rule, but for an intrinsic variational objective.  It moves
the composition only when the induced \emph{new operator-visible function}
reduces the frozen residual; changes that merely relabel or remix the same
range have zero value.  At a rank change the pseudoinverse is nonsmooth, so
discovery is carried out on a fixed spectral quotient and the rank is changed
only between solve--discover--certify cycles.

For computation, let $\widetilde G(\theta)$ and $\Xi(\theta)$ be the computed
gain and its perturbation radius from
Proposition~\ref{prop:main-safe-ranking}.  A stable discovery problem is
\begin{equation}
 \max_{\theta\in\Theta}
 \bigl\{\widetilde G(\theta)-\Xi(\theta)
       -\lambda\mathcal K(\theta)\bigr\}
 \quad\text{subject to}\quad
 A_C(\theta)\geq A_{\min},\qquad
 \varepsilon_C(\theta)<\sqrt{A_C(\theta)},
 \label{eq:certified-compositional-discovery}
\end{equation}
where $\mathcal K$ is an optional representation-cost functional.  On a
compact parameter set with continuous data and fixed quotient rank, a
maximizer exists.  This optimization only \emph{proposes} a block.  Final
acceptance still requires
Theorem~\ref{thm:prospective-certificate}; consequently an imperfect
search cannot invalidate the residual guarantee.

This yields a two-level variational principle.  Inner coefficient elimination
computes the best use of a fixed compositional range.  Outer differentiation
uses Eq.~\eqref{eq:compositional-gain-gradient} to improve that range.  The
prospective theorem then audits the finite computed map.  Composition is
therefore unified with operator-compatible enrichment at the level of both
optimization and certification, while remaining optional: if no
composition-generated block passes the audit, the method returns to h- or
p-refinement.

The preceding section determines whether a proposed direction is new and useful, but it
does not say how much error remains outside all tested candidates.  Adaptive
refinement therefore requires a second, independent object: a computable upper
bound for the current PDE error.  The quotient-flux reconstruction supplies
this bound; perturbation margins make candidate comparisons reliable; and the
resulting algorithm stops or refines using PDE data alone.

As one optional coercive specialization, consider the Poisson energy form
$a(u,v)=(\nabla u,\nabla v)$ with $F(v)=(q,\nabla v)$, and define
\[
 \mathcal N=\{s\in L^2(\Omega)^d:(s,\nabla v)=0
 \text{ for every }v\in H_0^1(\Omega)\}.
\]
The next identity is the corresponding classical Hilbert
projection/hypercircle error bound.  It supplies a computable stopping criterion for this
specialization; it is not an assumption of the general refinement framework.
The paper's contribution is its separation from, and coupling to,
Schur--Riesz action selection.

\begin{theorem}[Equilibrated-flux identity
  {\cite{braessschoberl2008,ernvohralik2015}}]
\label{thm:main-flux-bound}
Every conforming approximation $u_h$ satisfies
\begin{equation}
 \|u-u_h\|_a
 =\inf_{s\in\mathcal N}\|q-\nabla u_h-s\|_{L^2}.
 \label{eq:main-flux-bound}
\end{equation}
Consequently every computable $s_h\in\mathcal N$ gives the guaranteed upper
bound $\|u-u_h\|_a\leq\|q-\nabla u_h-s_h\|_{L^2}$, without a saturation
assumption or exact reference solution.
\end{theorem}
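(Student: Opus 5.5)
The plan is the Prager--Synge/hypercircle argument: an orthogonal decomposition of $L^2(\Omega)^d$ into gradients and divergence-free fields. Let $G=\{\nabla v: v\in H_0^1(\Omega)\}$. By Poincar\'e's inequality $G$ is a closed subspace of $L^2(\Omega)^d$. By definition $\mathcal N=G^{\perp}$ in the $L^2$ inner product, so $L^2(\Omega)^d=G\oplus\mathcal N$ orthogonally.

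The first step identifies the exact solution. Since $a(u,v)=(\nabla u,\nabla v)=F(v)=(q,\nabla v)$ for all $v\in H_0^1$, we have $(q-\nabla u,\nabla v)=0$. Hence $q-\nabla u\in\mathcal N$, and
\[
 q=\nabla u+(q-\nabla u)
\]
is exactly the orthogonal decomposition of $q$, with $P_G q=\nabla u$ and $P_{\mathcal N}q=q-\nabla u$.

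The second step computes the infimum. For any conforming $u_h\in H_0^1$, the field $w=q-\nabla u_h$ decomposes as
\[
 w=\nabla(u-u_h)+(q-\nabla u),
\]
where the first term lies in $G$ and the second in $\mathcal N$. The infimum over $s\in\mathcal N$ of $\|w-s\|_{L^2}$ is the distance from $w$ to the closed subspace $\mathcal N$. That distance equals $\|P_G w\|_{L^2}=\|\nabla(u-u_h)\|_{L^2}=\|u-u_h\|_a$, and the infimum is attained at $s=q-\nabla u$. Equivalently, for any $s\in\mathcal N$, Pythagoras gives
\[
 \|q-\nabla u_h-s\|^2=\|\nabla(u-u_h)\|^2+\|q-\nabla u-s\|^2\geq\|u-u_h\|_a^2 ,
\]
with equality at the stated $s$. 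This proves Eq.~\eqref{eq:main-flux-bound}. The guaranteed upper bound for any computable $s_h\in\mathcal N$ then follows immediately, and it uses neither saturation nor $u$ itself.

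There is essentially no obstacle here. The only point needing care is that $u_h$ must be conforming, so that $\nabla(u-u_h)$ genuinely lies in $G$, which fails for broken gradients. The closedness of $G$ is not even needed for the displayed Pythagoras inequality; it matters only for interpreting the infimum as an orthogonal projection. I would also remark that in practice $s_h\in\mathcal N$ is realized by an equilibrated flux: $s_h=q-\sigma_h$ with $\sigma_h\in H(\mathrm{div})$ satisfying $\nabla\cdot\sigma_h=\nabla\cdot q$. This gives the familiar bound $\|\nabla u_h-\sigma_h\|$, as in \cite{braessschoberl2008,ernvohralik2015}.
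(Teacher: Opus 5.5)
Your proof is correct and is the argument the paper has in mind. The paper cites this result as the classical hypercircle identity and describes it as the Hilbert-space distance from the flux residual $q-\nabla u_h$ to the fields $\mathcal N$ invisible to all gradients; your splitting $q-\nabla u_h=\nabla(u-u_h)+(q-\nabla u)$, followed by Pythagoras with the infimum attained at $s=q-\nabla u$, is exactly that computation, and you are right that conformity of $u_h$ is the only essential hypothesis.
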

This is the Hilbert-space distance from the raw flux residual to fields that
are invisible to all gradients.  The Supplementary Material records the
patchwise reconstruction used in the experiments.  In short,
Eq.~\eqref{eq:main-exact-gain} chooses the next action and
Eq.~\eqref{eq:main-flux-bound} supplies the computable stopping criterion.

\subsection{Reliable comparison with inexact variational quantities}
Discovery and acceptance have so far used exact operators.  To preserve their
ordering after discretization, the exact variational-gain identity in
Eq.~\eqref{eq:main-exact-gain} and the
quotient-flux identity in Eq.~\eqref{eq:main-flux-bound} assume exact
variational quantities, whereas assembly, quadrature, and algebraic solution
are finite.  A practical comparison therefore requires a margin that
shows when the ordering of two proposed enrichments survives these errors.
For a proposed block $C$, write
$G_C=\langle b_C,D_C^\dagger b_C\rangle$ and let
$(\widetilde D_C,\widetilde b_C)$ denote the computed quantities.

The following perturbation estimate is derived in this paper.  It combines a
standard inverse-perturbation argument with the quotient lower Riesz bound to
produce an operational acceptance and ranking margin for competing refinement
blocks.

\begin{proposition}[Perturbation-safe ranking]
\label{prop:main-safe-ranking}
Suppose $D_C\succeq A_CI$ on the quotient and
\[
 \|\widetilde D_C-D_C\|\leq\varepsilon_{D,C}<A_C,
 \qquad
 \|\widetilde b_C-b_C\|\leq\varepsilon_{b,C}.
\]
Then
\begin{align}
 |G_C-\widetilde G_C|&\leq\Xi_C,\qquad
 \widetilde G_C=\langle\widetilde b_C,
                         \widetilde D_C^\dagger\widetilde b_C\rangle,\\
 \Xi_C&=
 \frac{2\|\widetilde b_C\|\varepsilon_{b,C}+\varepsilon_{b,C}^2}{A_C}
 +\frac{\|\widetilde b_C\|^2\varepsilon_{D,C}}
 {A_C(A_C-\varepsilon_{D,C})}.
 \label{eq:main-ranking-margin}
\end{align}
Consequently $\widetilde G_C-\Xi_C>0$ proves positive improvement, and
$\widetilde G_C-\widetilde G_{C'}>\Xi_C+\Xi_{C'}$ proves that $C$ has greater
true variational gain than $C'$.
\end{proposition}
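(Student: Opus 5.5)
The plan is a two-term splitting of the error, with the data perturbation and the operator perturbation treated separately. Everything happens on the quotient coefficient space where $D_C\succeq A_CI$. There $D_C^\dagger=D_C^{-1}$ and $\|D_C^{-1}\|\leq 1/A_C$. I will assume, as the statement implicitly does, that $\widetilde D_C$ and $\widetilde b_C$ are represented on this same quotient.

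By Weyl's inequality, $\lambda_{\min}(\widetilde D_C)\geq A_C-\varepsilon_{D,C}>0$. So $\widetilde D_C$ is invertible there, $\widetilde D_C^\dagger=\widetilde D_C^{-1}$, and $\|\widetilde D_C^{-1}\|\leq (A_C-\varepsilon_{D,C})^{-1}$. This consistency of the quotient, which makes both pseudoinverses genuine inverses on one space, is the only delicate point. The rest is algebra.

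Write $\delta=b_C-\widetilde b_C$, so $\|\delta\|\leq\varepsilon_{b,C}$, and split
\[
 G_C-\widetilde G_C
 =\bigl[\langle b_C,D_C^{-1}b_C\rangle-\langle\widetilde b_C,D_C^{-1}\widetilde b_C\rangle\bigr]
 +\bigl[\langle\widetilde b_C,(D_C^{-1}-\widetilde D_C^{-1})\widetilde b_C\rangle\bigr].
\]

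For the first bracket, self-adjointness of $D_C^{-1}$ gives $2\langle\widetilde b_C,D_C^{-1}\delta\rangle+\langle\delta,D_C^{-1}\delta\rangle$. Cauchy--Schwarz and $\|D_C^{-1}\|\leq1/A_C$ bound it by $(2\|\widetilde b_C\|\varepsilon_{b,C}+\varepsilon_{b,C}^2)/A_C$.

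For the second bracket, use the resolvent identity $D_C^{-1}-\widetilde D_C^{-1}=D_C^{-1}(\widetilde D_C-D_C)\widetilde D_C^{-1}$. Its norm is at most $\varepsilon_{D,C}/(A_C(A_C-\varepsilon_{D,C}))$, which bounds the bracket by $\|\widetilde b_C\|^2\varepsilon_{D,C}/(A_C(A_C-\varepsilon_{D,C}))$. The triangle inequality then gives $|G_C-\widetilde G_C|\leq\Xi_C$.

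For the consequences, $G_C\geq\widetilde G_C-\Xi_C>0$ certifies positive true gain. By Theorem~\ref{thm:main-exact-gain}, this gain is the exact decrease in squared error. For ranking, $\widetilde G_C-\widetilde G_{C'}>\Xi_C+\Xi_{C'}$ gives
\[
 G_C\geq\widetilde G_C-\Xi_C>\widetilde G_{C'}+\Xi_{C'}\geq G_{C'}.
\]
The margin is expressed through the computed $\|\widetilde b_C\|$ rather than the unknown $\|b_C\|$, which is why the first bracket is expanded around $\widetilde b_C$.
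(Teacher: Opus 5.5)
Your argument is correct and follows essentially the route the paper indicates ("a standard inverse-perturbation argument with the quotient lower Riesz bound"). The quotient Riesz bound gives $\|D_C^{-1}\|\le A_C^{-1}$, and the inverse-perturbation step gives $\|\widetilde D_C^{-1}\|\le (A_C-\varepsilon_{D,C})^{-1}$. Splitting $G_C-\widetilde G_C$ into a load term expanded about the computed $\widetilde b_C$ plus a resolvent-identity term reproduces the two summands of $\Xi_C$ exactly, and the ranking consequences follow by the triangle inequality.

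Your caveat that $\widetilde D_C$ and $\widetilde b_C$ must be compressed to the same quotient as $D_C$ is the right one. Without it, small spurious eigenvalues of $\widetilde D_C$ on $\ker Q$ could let $\widetilde G_C$ capture residual outside the range of $Q$, and the bound would fail.
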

The lower quotient bound $A_C$ has an operational role: nearly dependent
functions enlarge the uncertainty margin and are accepted only if their
residual gain is correspondingly stronger.  The Supplementary Material gives
the proof and separates the
quadrature, map, and algebraic contributions to the perturbation radii.

\subsection{Global impact of local decisions}
The preceding margin protects a single comparison, but a sequence of locally
positive choices could still remove an arbitrarily small fraction of the
remaining residual.  The missing global quantity is therefore \emph{coverage}:
how much of the current residual is captured by the joint innovation space of
the entire admissible candidate pool.  This leads to a contraction test that
is computed from the same Schur quantities as the refinement decision and does
not assume estimator reduction.

D\"orfler bulk marking is the classical principle behind selecting a fixed
fraction of an error indicator \cite{dorfler1996}.  The theorem below is the
new quotient--Schur specialization used here: coverage is measured by the
joint operator-image innovation of a mixed h/p/operator candidate pool, and
the retained fraction yields computable residual, dimension, and work
certificates.

\begin{theorem}[Bulk-coverage contraction]
\label{thm:bulk-coverage}
Let $u_m$ minimize $\|g-Tv\|_Y$ over $V_m$, let
$r_m=g-Tu_m$, and let $W_m\subset Z_m^\perp$ be the joint operator-image
innovation of the certified candidate pool.  Denote its exact aggregate gain
by
\[
 G_m^{\rm pool}=\|P_{W_m}r_m\|_Y^2
\]
and suppose a computable lower bound satisfies
\begin{equation}
 \underline G_m^{\rm pool}\leq G_m^{\rm pool},
 \qquad
 \underline G_m^{\rm pool}\geq\alpha\|r_m\|_Y^2
 \quad\text{for some }\alpha>0.
 \label{eq:coverage-condition}
\end{equation}
If the selected block or batch has a certified gain
$\underline G_m^{\rm sel}\geq
\theta\underline G_m^{\rm pool}$, where $0<\theta\leq1$, then
\begin{equation}
 \|r_{m+1}\|_Y^2\leq(1-\theta\alpha)\|r_m\|_Y^2.
 \label{eq:bulk-contraction}
\end{equation}
Consequently, uniform positive lower bounds for $\alpha$ and $\theta$ imply
geometric convergence in the induced residual norm.
\end{theorem}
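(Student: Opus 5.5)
The argument reduces the contraction to the exact Galerkin gain identity, written in minimum-residual form. Let $W_{\rm sel}\subset W_m$ be the innovation space of the selected block or batch, meaning the image of its candidates under $(I-P_{Z_m}^Y)T$. The enlarged space is $V_{m+1}=V_m+\mathcal C_{\rm sel}$, and its operator image is $Z_{m+1}=T(V_{m+1})=Z_m\oplus W_{\rm sel}$, an orthogonal sum because $W_{\rm sel}\subset Z_m^\perp$. Since $u_{m+1}$ minimizes $\|g-Tv\|_Y$ over $V_{m+1}$, the residual $r_{m+1}$ equals $(I-P_{Z_{m+1}})g$.

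\textbf{Step 1 (Pythagoras).} Because $r_m=(I-P_{Z_m})g\perp Z_m$ and $W_{\rm sel}\perp Z_m$, we have $P_{Z_{m+1}}g=P_{Z_m}g+P_{W_{\rm sel}}r_m$. This gives
\[
 \|r_{m+1}\|_Y^2=\|r_m\|_Y^2-\|P_{W_{\rm sel}}r_m\|_Y^2 .
\]
This is the minimum-residual version of Theorem~\ref{thm:main-exact-gain}, and also the identity behind Theorem~\ref{thm:minres-operator-covariance}. The subtracted term is the exact selected gain $G_m^{\rm sel}=\langle b,D^\dagger b\rangle$ for the selected block.

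\textbf{Step 2 (chaining the certified bounds).} The meaning of ``certified gain'' is that $\underline G_m^{\rm sel}$ is a lower bound for the true gain, $\underline G_m^{\rm sel}\leq G_m^{\rm sel}$. This holds, for instance, when $\underline G_m^{\rm sel}=\widetilde G-\Xi$ as in Proposition~\ref{prop:main-safe-ranking}, or when it comes from the prospective certificate of Theorem~\ref{thm:prospective-certificate}. Combining this with the hypotheses gives
\[
 G_m^{\rm sel}\geq\underline G_m^{\rm sel}\geq\theta\,\underline G_m^{\rm pool}\geq\theta\alpha\|r_m\|_Y^2 .
\]
Inserting this chain into Step 1 yields \eqref{eq:bulk-contraction}.

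\textbf{Step 3 (geometric convergence).} Iterate Step 2 with uniform constants $\alpha\geq\alpha_0>0$ and $\theta\geq\theta_0>0$. This gives $\|r_m\|_Y^2\leq(1-\theta_0\alpha_0)^m\|r_0\|_Y^2$. Note that $\theta\alpha\leq1$ automatically, since $\theta\le1$ and $\underline G^{\rm pool}_m\leq\|P_{W_m}r_m\|^2\leq\|r_m\|^2$.

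\textbf{Main obstacle.} The only delicate point is the interpretation of ``certified gain.'' The proof must state explicitly that the certified quantity is a true lower bound for the exact gain of the selected space. That space is the span actually adjoined, including any construction error, and its innovation lies in $Z_m^\perp$. Only with this reading does the inequality pass to $\|r_{m+1}\|$. A batch that is not contained in the pool is harmless: Step 1 holds for any added innovation space, so $W_{\rm sel}\subset W_m$ is not actually needed. I would also remark that for the computed block of Theorem~\ref{thm:prospective-certificate}, the bound $\rho_{\rm cert}^2$ gives such a lower bound directly, namely $(1-\rho_{\rm cert}^2)\|r_m\|^2$.
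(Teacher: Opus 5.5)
Your proof is correct and takes essentially the same route the paper relies on. It uses the minimum-residual Pythagoras identity $\|r_{m+1}\|_Y^2=\|r_m\|_Y^2-\|P_{W_{\rm sel}}r_m\|_Y^2$ (the exact gain of Theorem~\ref{thm:main-exact-gain} in the $Y$-geometry), chains $G_m^{\rm sel}\geq\underline G_m^{\rm sel}\geq\theta\,\underline G_m^{\rm pool}\geq\theta\alpha\|r_m\|_Y^2$, and then iterates; your remarks that ``certified'' must mean a genuine lower bound for the gain of the space actually adjoined, and that neither $W_{\rm sel}\subset W_m$ nor $\underline G_m^{\rm pool}\leq G_m^{\rm pool}$ is needed for the contraction itself, are accurate.
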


\begin{corollary}[Finite-run convergence and complexity certificate]
\label{cor:finite-run-complexity}
For any accepted sequence of $N$ batches, let $\alpha_m$ and $\theta_m$ be
the computed lower coverage and retained-gain fractions in
Eq.~\eqref{eq:coverage-diagnostics}. Then
\begin{equation}
 \|r_N\|_Y^2\leq \|r_0\|_Y^2
 \prod_{m=0}^{N-1}(1-\alpha_m\theta_m).
 \label{eq:finite-run-envelope}
\end{equation}
If $\alpha_m\geq\alpha_0>0$ and $\theta_m\geq\theta_0>0$ until
$\|r_m\|_Y\leq\tau$, the required number of accepted batches satisfies
\begin{equation}
 N\leq
 \left\lceil
 \frac{\log(\|r_0\|_Y^2/\tau^2)}
      {-\log(1-\alpha_0\theta_0)}
 \right\rceil .
 \label{eq:finite-run-batch-bound}
\end{equation}
If batch $m$ adds $d_m$ quotient coordinates and costs $\kappa_m$, the
certified representation dimension and accumulated refinement work are
$\dim V_0+\sum_{m<N}d_m$ and $\sum_{m<N}\kappa_m$, respectively.
\end{corollary}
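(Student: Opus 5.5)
The plan is to iterate the one-step contraction of Theorem~\ref{thm:bulk-coverage} with batch-dependent constants, and then convert the resulting product envelope into a batch count and dimension/work totals.

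\textbf{Step 1: one-step bound.} At batch $m$, the computed diagnostics $\alpha_m$ and $\theta_m$ are taken to be exactly the fractions certified in Eq.~\eqref{eq:coverage-diagnostics}, namely
\[
 \underline G_m^{\rm pool}\geq\alpha_m\|r_m\|_Y^2,
 \qquad
 \underline G_m^{\rm sel}\geq\theta_m\underline G_m^{\rm pool},
\]
with $0<\theta_m\leq1$. The argument of Theorem~\ref{thm:bulk-coverage} uses only these two inequalities at the current stage, together with the fact that minimum-residual approximation over the nested space $V_{m+1}\supseteq V_m$ reduces the squared residual by at least the certified selected gain. Hence it applies verbatim with $(\alpha,\theta)$ replaced by $(\alpha_m,\theta_m)$:
\[
 \|r_{m+1}\|_Y^2\leq(1-\alpha_m\theta_m)\|r_m\|_Y^2 .
\]
I would note that $\alpha_m\theta_m\leq1$, since $\underline G_m^{\rm sel}\leq\|r_m\|_Y^2$. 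This keeps every factor nonnegative, which matters for the inequalities to multiply correctly.

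\textbf{Step 2: product envelope.} Induction on $N$ then gives Eq.~\eqref{eq:finite-run-envelope}. The only issue here is notational: the statement says "for any accepted sequence of $N$ batches." A step at which the safe rule of Theorem~\ref{thm:safe-automatic-transfer} rejects and returns the incumbent changes nothing. It either does not count as a batch, or it carries $\theta_m=0$ and hence factor $1$, so the envelope remains valid under either convention.

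\textbf{Step 3: batch count.} Under the uniform bounds $\alpha_m\geq\alpha_0$ and $\theta_m\geq\theta_0$, each factor is at most $q:=1-\alpha_0\theta_0<1$, so $\|r_N\|_Y^2\leq q^N\|r_0\|_Y^2$. Requiring $q^N\|r_0\|_Y^2\leq\tau^2$ and taking logarithms gives
\[
 N\geq\log(\|r_0\|_Y^2/\tau^2)\big/\bigl(-\log q\bigr).
\]
The process stops at the first index where $\|r_m\|_Y\leq\tau$, so the number of batches performed is at most the ceiling of this quantity, which is Eq.~\eqref{eq:finite-run-batch-bound}. If $\|r_0\|_Y\leq\tau$ already, the bound is nonpositive and $N=0$, consistent with the statement. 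The degenerate case $q=0$, where one batch suffices, is handled separately.

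\textbf{Step 4: dimension and work.} These are bookkeeping identities. Each accepted batch enlarges $V_m$ by exactly $d_m$ quotient coordinates: by Theorem~\ref{prop:main-successive-stability} and the identification of $\ker Q$, duplicates are removed, so the quotient dimension adds. Nesting is preserved because accepted functions are retained, as noted in the conforming-enrichment discussion. Summing over batches gives $\dim V_0+\sum_{m<N}d_m$, and summing the per-batch costs gives $\sum_{m<N}\kappa_m$.

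The main obstacle is Step 1. One must make sure the computed $\alpha_m,\theta_m$ are genuine certified lower bounds, so that the exact pool and selected gains dominate them. One must also check that the realized residual reduction is at least the certified selected gain even when the batch is inexactly constructed. For the latter I would invoke Theorem~\ref{thm:prospective-certificate} or Proposition~\ref{prop:main-safe-ranking}, taking $\underline G_m^{\rm sel}$ to be the perturbation-corrected value $\widetilde G-\Xi$.
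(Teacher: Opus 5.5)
Your proposal is correct and follows the paper's own one-line argument: apply the contraction step of Theorem~\ref{thm:bulk-coverage} at each batch with $(\alpha_m,\theta_m)$, multiply the factors, solve the geometric inequality for $N$, and read off the dimension and work totals as telescoping sums (the quotient--Riesz theorem is not needed for that count). One small correction: when $\|r_0\|_Y^2\leq(1-\alpha_0\theta_0)\tau^2$ the ceiling in Eq.~\eqref{eq:finite-run-batch-bound} is negative, so $N=0$ is not ``consistent'' with it as you claim; the batch bound must be read under $\|r_0\|_Y>\tau$ (or with a $\max\{0,\cdot\}$), just as you already set aside the case $\alpha_0\theta_0=1$.
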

This corollary is a direct finite-run consequence of the new contraction
theorem: iterate Eq.~\eqref{eq:bulk-contraction}, then solve the resulting
geometric inequality.  No separate proof is needed.

Contraction alone does not show whether the accepted sequence uses its
dimensions effectively.  To compare it with the best mixed refinement of a
given size, fix a finite, prospectively specified pool $\mathcal Q$ of h-, p-,
and operator-derived blocks.  For $S\subset\mathcal Q$, let
\begin{equation}
 \mathcal F(S)=\|r_0\|_Y^2-
 \min_{v\in V_0+\sum_{q\in S}\mathcal C_q}\|g-Tv\|_Y^2
 \label{eq:captured-gain-set-function}
\end{equation}
be the total residual energy captured after refitting all selected blocks.
Weak-submodular greedy bounds are classical
\cite{elenbergetal2018}; the contribution below is their quotient--Schur
specialization to mixed variational refinement, including computable
finite-assembly margins.

\begin{theorem}[Certified mixed-refinement near-oracle bound]
\label{thm:mixed-near-oracle}
Fix an integer $k\geq1$.  Suppose that, after quotienting exact redundancy,
every conditional joint Schur Gram operator formed from at most $2k$ blocks
has lower bound $A_{2k}>0$, while every conditional one-block Schur Gram
operator has upper bound $B_1<\infty$.  Set
$\gamma=A_{2k}/B_1\leq1$.

Starting with $S_0=\varnothing$, at step $m$ choose the block maximizing its
computed marginal gain $\widetilde G_m(q)$, refit in the enlarged space, and
assume Proposition~\ref{prop:main-safe-ranking} supplies
$|G_m(q)-\widetilde G_m(q)|\leq\Xi_m(q)$.  Define
$\delta_m=2\max_{q\notin S_m}\Xi_m(q)$ and
$\rho=1-\gamma/k$.  Then, for $0\leq m\leq k$ and every comparator
$O\subset\mathcal Q$ with $|O|\leq k$,
\begin{equation}
 \mathcal F(S_m)\geq
 (1-\rho^m)\mathcal F(O)
 -\sum_{j=0}^{m-1}\rho^{m-1-j}\delta_j .
 \label{eq:mixed-near-oracle}
\end{equation}
In particular, after $k$ accepted blocks,
\begin{equation}
 \mathcal F(S_k)\geq
 (1-e^{-\gamma})\mathcal F(O)
 -\sum_{j=0}^{k-1}\rho^{k-1-j}\delta_j .
 \label{eq:mixed-near-oracle-k}
\end{equation}
Equivalently, the squared residual of the computed mixed refinement is at
most the oracle residual plus
$e^{-\gamma}\mathcal F(O)$ and the displayed, fully computable inexactness
penalty.
\end{theorem}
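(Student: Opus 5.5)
We follow the weak-submodularity proof of \cite{elenbergetal2018}, but work directly in the residual space $Y$ so that every constant is a Schur quantity. Write $Z(S)=Z_0+\sum_{q\in S}T(\mathcal C_q)$. Then $\mathcal F(S)=\|P_{Z(S)}g\|_Y^2-\|P_{Z_0}g\|_Y^2$ is the squared norm of the projection of $r_0$ onto $Z(S)\ominus Z_0$. For a block $q$, the exact marginal gain at stage $m$ is Theorem~\ref{thm:main-exact-gain} in minimum-residual form:
\[
G_m(q)=\langle b_{m,q},D_{m,q}^\dagger b_{m,q}\rangle,
\qquad b_{m,q}=Q_{m,q}^*r_m,\quad Q_{m,q}=(I-P_{Z(S_m)})T\mathcal C_q .
\]

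\textbf{Step 1: one-step domination.} Fix $m$ and a comparator $O$. Let $O'=O\setminus S_m$ and $W=Z(S_m\cup O)\ominus Z(S_m)$. Monotonicity of projections gives
\[
\mathcal F(O)-\mathcal F(S_m)\le \mathcal F(S_m\cup O)-\mathcal F(S_m)=\|P_Wr_m\|^2 .
\]
The space $W$ is the range of the joint conditional block $Q_{O'}=[Q_{m,q}]_{q\in O'}$. At most $2k$ blocks are involved in $S_m\cup O$ when $m\le k$; the step that needs care is that a conditional Gram matrix of $O'$ given $S_m$ is a Schur complement of the joint Gram matrix of at most $2k$ blocks, and Schur complements inherit the lower bound. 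Hence $Q_{O'}^*Q_{O'}\succeq A_{2k}I$. This yields
\[
\|P_Wr_m\|^2=\langle Q_{O'}^*r_m,(Q_{O'}^*Q_{O'})^{\dagger}Q_{O'}^*r_m\rangle
\le A_{2k}^{-1}\sum_{q\in O'}\|b_{m,q}\|^2 .
\]
In the other direction, each one-block Gram matrix satisfies $D_{m,q}\preceq B_1 I$, so $G_m(q)\ge\|b_{m,q}\|^2/B_1$. Combining the two bounds with $|O'|\le k$ gives
\[
\max_q G_m(q)\ \ge\ \frac{\gamma}{k}\bigl(\mathcal F(O)-\mathcal F(S_m)\bigr).
\]

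\textbf{Step 2: inexact greedy.} The chosen block $\hat q$ maximizes $\widetilde G_m$. Proposition~\ref{prop:main-safe-ranking} then gives
\[
G_m(\hat q)\ \ge\ \widetilde G_m(\hat q)-\Xi_m(\hat q)\ \ge\ \widetilde G_m(q^\star)-\Xi_m(\hat q)\ \ge\ G_m(q^\star)-\delta_m ,
\]
where $q^\star$ is the exact maximizer. Because $G_m(\hat q)=\mathcal F(S_{m+1})-\mathcal F(S_m)$ after refitting, set $\Delta_m=\mathcal F(O)-\mathcal F(S_m)$. The recursion is
\[
\Delta_{m+1}\le\rho\,\Delta_m+\delta_m,\qquad \Delta_0=\mathcal F(O).
\]
Unrolling it gives $\Delta_m\le\rho^m\mathcal F(O)+\sum_{j<m}\rho^{m-1-j}\delta_j$, which is \eqref{eq:mixed-near-oracle}. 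Using $\rho^k=(1-\gamma/k)^k\le e^{-\gamma}$ gives \eqref{eq:mixed-near-oracle-k}. The residual form follows from $\|g-Tv\|^2$ at the optimum being $\|r_0\|^2-\mathcal F$.

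\textbf{Main obstacle.} The delicate step is the Schur-complement inheritance in Step 1. It must be checked on the quotient by exact redundancy: blocks of $O$ already lying in $Z(S_m)$ contribute zero and must be discarded rather than left to spoil the lower bound. I would handle this by applying the joint lower bound $A_{2k}$ to the quotient joint Gram matrix of $S_m\cup O'$. Its Schur complement onto $O'$ then satisfies the bound via the variational characterization $\min_{x_S}\|Q_Sx_S+Q_{O'}x_O\|^2\ge A_{2k}\|x_O\|^2$.
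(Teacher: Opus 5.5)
Your proof is correct and follows the paper's argument step for step. The steps match: the one-block bound $G_m(q)\geq\|b_q\|^2/B_1$, the joint bound $\leq A_{2k}^{-1}\sum_{q\in O\setminus S_m}\|b_q\|^2$ combined with monotonicity, averaging over at most $k$ terms, the two-$\Xi$ inexact-greedy chain, and the unrolled recursion with $(1-\gamma/k)^k\leq e^{-\gamma}$. The only addition is your variational Schur-complement derivation of the conditional lower bound from an unconditional joint one; the paper instead states its hypothesis directly for conditional joint Gram operators, so your version also holds under that slightly weaker reading of the assumption.
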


The theorem separates approximation richness from numerical reliability.
The best admissible mixed sequence enters only through $\mathcal F(O)$;
$\gamma$ measures how strongly candidate innovations interfere after exact
redundancy is removed; and $\delta_m$ records quadrature, assembly, and solve
uncertainty.  Thus a poor result is attributable to a weak candidate pool, an
ill-conditioned joint innovation, or insufficient numerical accuracy.  The
comparison is deliberately made against a fixed prospective pool.  If new
candidates are generated from the observed residual, the same conclusion
holds conditionally for the realized pool only when generation and auditing
are separated; no unrestricted adaptive-oracle claim is made.
The proof is given in Appendix~\ref{app:mixed-near-oracle-proof}.

This certificate is deliberately a posteriori: it makes no uniform richness
claim for an arbitrary candidate generator. Every completed run instead
returns a computable residual envelope, dimension, and work account. If the
coverage floor fails, Algorithm~\ref{alg:hpc} returns
\textsc{NoCoverage}; insufficient candidate richness is then diagnosed rather
than hidden inside an asymptotic convergence claim.

The two observed ratios
\begin{equation}
 \alpha_m=\frac{\underline G_m^{\rm pool}}{\|r_m\|_Y^2},
 \qquad
 \theta_m=\frac{\underline G_m^{\rm sel}}
 {\underline G_m^{\rm pool}}
 \label{eq:coverage-diagnostics}
\end{equation}
separate two failure modes.  A small $\alpha_m$ says that the current library
does not cover the unresolved residual and must be enlarged or spatially
remarked; a small $\theta_m$ says that selection is too timid and should admit
a block rather than one function.  Neither diagnosis can be obtained from a
positive one-candidate gain alone.  In a coercive energy formulation the same
argument applies after the Riesz identification; for a general minimum-
residual formulation, an inf--sup lower bound converts residual convergence to
the corresponding physical norm.  The proof and the older estimator-reduction
alternative are given in the Supplementary Material.

\secondarytheoryD
The following is the standard AFEM quasi-error contraction argument under
estimator reduction and reliability \cite{casconetal2008}; it is retained only
as an alternative to the new bulk-coverage theorem.
\begin{proposition}[Conditional quasi-error contraction
  {\cite{casconetal2008}}]
\label{prop:main-contraction}
Let $\eta_m$ be reliable,
$\|u-u_m\|_a^2\leq C_{\rm rel}\eta_m^2$, and suppose the accepted h-, p-, or
c-action satisfies
\[
 \eta_{m+1}^2\leq q\eta_m^2+C_{\rm stab}
 \|u_{m+1}-u_m\|_a^2,\qquad 0<q<1.
\]
For $0<\gamma<C_{\rm stab}^{-1}$, the quasi-error
$\mathcal E_m^2=\|u-u_m\|_a^2+\gamma\eta_m^2$ obeys
\begin{equation}
 \mathcal E_{m+1}^2\leq
 \left(1-\frac{\gamma(1-q)}{C_{\rm rel}+\gamma}\right)
 \mathcal E_m^2.
 \label{eq:main-contraction}
\end{equation}
\end{proposition}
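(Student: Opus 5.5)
The plan is to run the standard AFEM quasi-error contraction argument, in three steps: a Galerkin Pythagoras identity, the assumed estimator reduction, and reliability to trade part of the error for estimator.

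\emph{Step 1.} Because the spaces are nested ($V_m\subset V_{m+1}$, from the preservation discussion in Section~2.2) and $a$ is the energy inner product, Galerkin orthogonality gives
\[
 \|u-u_{m+1}\|_a^2=\|u-u_m\|_a^2-\|u_{m+1}-u_m\|_a^2 .
\]
This holds for any accepted h-, p-, or c-action.

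\emph{Step 2.} Add $\gamma$ times the assumed estimator reduction:
\[
 \mathcal E_{m+1}^2\leq \|u-u_m\|_a^2+\gamma q\eta_m^2-(1-\gamma C_{\rm stab})\|u_{m+1}-u_m\|_a^2 .
\]
Since $\gamma<C_{\rm stab}^{-1}$, the last term is nonpositive, so we drop it:
\[
 \mathcal E_{m+1}^2\leq\|u-u_m\|_a^2+\gamma q\eta_m^2 .
\]

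\emph{Step 3.} Write $\gamma q\eta_m^2=\gamma\eta_m^2-\gamma(1-q)\eta_m^2$. Choose a split parameter $t\in[0,1]$ and use reliability on the fraction $t$ of the removed estimator, $\eta_m^2\geq\|u-u_m\|_a^2/C_{\rm rel}$. This yields
\[
 \mathcal E_{m+1}^2\leq\Bigl(1-\tfrac{t\gamma(1-q)}{C_{\rm rel}}\Bigr)\|u-u_m\|_a^2+\bigl(1-(1-t)(1-q)\bigr)\gamma\eta_m^2 .
\]
Balance the two coefficients by setting $t\gamma/C_{\rm rel}=1-t$, so that $t=C_{\rm rel}/(C_{\rm rel}+\gamma)$. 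Both coefficients then equal $1-\gamma(1-q)/(C_{\rm rel}+\gamma)$, which proves \eqref{eq:main-contraction}.

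\emph{Main obstacle.} The only delicate point is Step 1: it needs genuine nestedness and symmetric coercive Galerkin orthogonality, so this is where the preservation requirement for accepted enrichments enters. The rest is the algebra of choosing the split $t$ so that the factors match.
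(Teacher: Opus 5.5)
Your proof is correct and is the standard Casc\'on et al.\ argument that the paper cites in place of a proof. You use Galerkin Pythagoras on nested spaces, drop the stability term because $\gamma C_{\rm stab}<1$, and then split the estimator decrease via reliability with $t=C_{\rm rel}/(C_{\rm rel}+\gamma)$. You are also right that nestedness of the accepted h-, p-, or c-action, together with symmetric coercive Galerkin orthogonality, is the hypothesis the statement uses implicitly.
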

\endsecondarytheoryD

\subsection{Safe Schur--Riesz transfer and refinement algorithm}
These ingredients now combine into one adaptive loop.  The quotient-flux bound answers
whether the current approximation is sufficiently accurate; the innovation
identity answers which admissible enlargement gives the largest guaranteed
decrease.  Keeping these questions separate prevents a small local residual
from being confused with a useful new basis function.

\begin{algorithm}[H]
\caption{Safe Schur--Riesz transfer and adaptive refinement}\label{alg:hpc}
\begin{algorithmic}[1]
\Require $T:X\to Y$, $g\in Y$, incumbent $V_0$, optional transfer operator
$K$ and independent marking data $\{g_i\}_{i=1}^n$, generators
$\mathfrak D_h,\mathfrak D_p,\mathfrak D_c$, tolerance $\tau>0$,
$0<\theta\leq1$, coverage floor $\alpha_0>0$, costs $\kappa(C)$,
certified construction data
\Ensure $(u_m,\eta_m)$ with $\eta_m\leq\tau$, or a diagnosed failure
\If{an automatic transfer proposal is requested}
  \State $\widehat C\gets\Call{Compress}{K\Omega}$ using
    Eq.~\eqref{eq:automatic-transfer-block}
  \State quotient $(I-P_{T(V_0)}^Y)(T\circ\widehat C)$ and construct
    the matched candidate space $V_A$
  \State $V_0\gets\Call{SafeTransfer}{V_0,V_A,\{g_i\},\alpha}$ using
    Theorem~\ref{thm:safe-automatic-transfer}
  \Comment{accept $V_A$ or retain $V_0$ exactly}
\EndIf
\For{$m=0,1,\ldots$}
  \State $u_m\gets\arg\min_{v\in V_m}\|g-Tv\|_Y$
  \State $Z_m\gets T(V_m)$; $r_m\gets g-Tu_m$
  \State $(\eta_m,\{\eta_{m,K}\}_{K\in\mathcal T_m})
     \gets\Call{Estimate}{u_m}$
  \If{$\eta_m\leq\tau$}
    \State \Return $(u_m,\eta_m)$
  \EndIf
  \State $\mathcal M_m\gets\Call{Dorfler}
     {\{\eta_{m,K}\},\theta}$
  \State $\mathfrak D_m\gets\Call{Generate}{\mathcal M_m;h,p}
  \cup\Call{Discover}{r_m;c}$ using Eq.~\eqref{eq:certified-compositional-discovery}
  \ForAll{$\widehat C:E_C\to X$ in $\mathfrak D_m$}
    \State $\widehat Q_C\gets(I-P_{Z_m}^{Y})(T\circ\widehat C)$
    \State $\widetilde D_C\gets\widehat Q_C^*\widehat Q_C$;
      $\widetilde b_C\gets\widehat Q_C^*r_m$
    \State quotient $\ker\widehat Q_C$ and set
      $(A_C,\varepsilon_C,\Xi_C)\gets\Call{Audit}{\widehat C}$
    \State $\underline G_C\gets
      \langle\widetilde b_C,\widetilde D_C^\dagger
      \widetilde b_C\rangle-\Xi_C$
    \State $s_C\gets[\underline G_C>0]\wedge[A_C>0]$
    \If{certified ideal-coordinate data are declared}
      \State $\rho_C\gets\Call{Prospective}
        {A_C,\varepsilon_C,\gamma_C}$ using Eq.~\eqref{eq:main-prospective-certificate}
      \State $s_C\gets s_C\wedge
        [\varepsilon_C<\sqrt{A_C}]\wedge[\rho_C<1]$
    \EndIf
  \EndFor
  \State $\mathfrak A_m\gets
    \{\widehat C\in\mathfrak D_m:s_{\widehat C}=\mathrm{true}\}$
  \If{$\mathfrak A_m=\varnothing$}
    \State \Return \textsc{NoGain}
  \EndIf
  \State jointly quotient $\{\widehat Q_C:C\in\mathfrak A_m\}$ and
    compute $\underline G_m^{\rm pool}$
  \State $\alpha_m\gets\underline G_m^{\rm pool}/\|r_m\|_Y^2$
  \If{$\alpha_m<\alpha_0$}
    \State \Return \textsc{NoCoverage} with diagnostic $\alpha_m$
  \EndIf
  \State $\mathcal B_m\gets\Call{BulkSelect}{\mathfrak A_m,\theta}$:
    $\underline G_{\mathcal B_m}\geq
    \theta\underline G_m^{\rm pool}$ at least cost
  \State $V_{m+1}\gets V_m+
    \sum_{\widehat C\in\mathcal B_m}\widehat C(E_{\widehat C})$
\EndFor
\end{algorithmic}
\end{algorithm}

The initial transfer stage is optional: without it the algorithm recovers the
established certified refinement loop exactly.  With it, operator responses
are compressed and compared with the incumbent before adaptive enlargement
begins.  The same safe decision can be repeated after a material change in the
operator or declared load law.  This optional matched-space replacement is
distinct from the adaptive loop, where every h-, p-, and operator-derived
proposal is projected against the same $V_m$ as in
Eq.~\eqref{eq:common-incumbent-innovations}.  Here c denotes a compact, problem-adapted
enrichment generated by composition; other operator-derived blocks enter
through the same discovery and certification interface.
Algorithm~\ref{alg:hpc} does not prescribe tent maps or any single
compositional family.  Composed tent functions are used only to illustrate how
depth can encode progressively finer scales compactly.  By themselves they
remain piecewise affine, may require many orientations to resolve curved or
anisotropic structure, and contain no information about the PDE operator.
They are therefore unsuitable as a universal enrichment family.  The principal
experiments instead use harmonic or localized spectral functions; the scope
studies also consider singular, interface, asymptotic, and transported
profiles.  Location, scale, orientation, profile family, phase, transport
history, and, where relevant, composition depth may be continuous candidate
parameters.  The theorem compares the resulting
functions only after projection onto the current space.  Candidate generation
may therefore be approximate, but acceptance remains variational: an optimizer
proposes a direction and the lower-gain calculation decides whether it enters
the space.  Bulk selection can also reduce offline work: one joint coverage
calculation supports several accepted functions before the next global solve,
assembly, and audit.  A scalable implementation should generate candidates
lazily, maintain incremental Schur/QR lower bounds, and stop enlarging the
pool once the required coverage has been certified; forming a complete dense
candidate family is not required by the theorem.  The prospective gate $\rho_C<1$ is imposed when a computed block
approximates a declared ideal operator coordinate and certified
$(A_C,\varepsilon_C,\gamma_C)$ are available.  For an ordinary h/p block
assembled directly in the discrete space, $\varepsilon_C=0$ and the
perturbation-safe lower gain is the operative test.  Thus the algorithm does
not demand an artificial ideal map for classical finite-element actions.
One-dimensional actions are not forced: an h midpoint can lie at an
oscillatory zero, and one member of a structured enrichment family can be weak
although a small block is useful.  Comparing exact nested blocks avoids both
false stopping mechanisms.

For factor-aware integration, the active maps generate a finite filtration
$\mathcal G_0\subset\cdots\subset\mathcal G_L$.  Gaussian rules operate on its
atoms; only the join of factors occurring in a particular Gram entry is
needed.  The rigorous margin is
$2\sum_A\mu(A)\epsilon_A$, where $\epsilon_A$ certifies uniform polynomial
approximation on atom $A$; the corresponding corollary is given in the
Supplementary Material.  The numerical implementation reports an
order-doubling estimate for this margin except where piecewise-polynomial degree
counting proves exactness.

\secondarynumericsD
\subsection{Full supporting PDE study}
This section evaluates the theory and the resulting adaptive method.  We ask whether richer functions reduce
Galerkin error and intrinsic dimension under the same reference-independent
stopping rule as hp; whether predicted gain equals realized decrease; and
whether the flux majorant bounds the independently audited error.  Solver
cost is reported separately because multigrid solves in a fixed space, whereas
h, p, and c change that space.
The evidence must test the theory rather than merely exhibit small errors.  We
therefore record whether the Schur quotient removes redundancy, whether the
predicted gain equals the realized Galerkin decrease, whether the flux
majorant bounds an independently audited error, and whether the accepted space
improves on hp at a common stopping rule.  Fine or exact solutions are used
only for these post-hoc audits.  Controlled scope and falsification examples
are reported in the Supplementary Material.

\subsection{Controlled trial--test stability beyond coercivity}
We first isolate the mechanism in Eq.~\eqref{eq:main-discrete-infsup} on the
advection--reaction problem
\[
 \mathcal Bu=u'+2u=f_d,\qquad u(0)=0,\qquad
 f_d(x)=1+\cos(2^d\pi x).
\]
With the graph norm $\|u\|_X=\|\mathcal Bu\|_{L^2}$, the continuous inf--sup
constant is one. The base trial function is the exact response to the constant
load. For the remaining oscillatory load, propose the operator-derived
zero-inflow response $c_d$, so that
\[
 \mathcal Bc_d=\cos(2^d\pi x).
\]
These functions have a depth-$d$ description through
$\sin(2t)=2\sin(t)\cos(t)$ and $\cos(2t)=2\cos^2(t)-1$. We compare the
unchanged one-dimensional test space, shifted Legendre test spaces, and the
paired test addition $\cos(2^d\pi x)$. Every entry in
Table~\ref{tab:paired-infsup-pilot} is obtained from the generalized
eigenvalue in Eq.~\eqref{eq:main-discrete-infsup}; no reference solution or
fitted indicator is used.

\begin{table}[t]
\centering
\caption{Discrete stability in the controlled advection--reaction problem. The
unchanged test space is blind to each oscillatory trial innovation. A paired
test direction restores $\beta_R=1$ with two test coordinates; the polynomial
test dimension needed for $\beta_R\geq .99$ grows with frequency and hence
exponentially in composition depth. The base residual is $1/\sqrt2$ in every
row and the enriched residual is zero to quadrature precision.}
\begin{tabular}{rrrrr}
\toprule
depth & frequency & unchanged $\beta_R$ & polynomial dimension & paired $\beta_R$\\
\midrule
1 & 2   & $2.61\,10^{-13}$ & 5   & 1.000\\
2 & 4   & $2.62\,10^{-13}$ & 7   & 1.000\\
3 & 8   & $2.60\,10^{-13}$ & 15  & 1.000\\
4 & 16  & $2.61\,10^{-13}$ & 27  & 1.000\\
5 & 32  & $2.58\,10^{-13}$ & 53  & 1.000\\
6 & 64  & $2.47\,10^{-13}$ & 103 & 1.000\\
7 & 128 & $2.41\,10^{-13}$ & 203 & 1.000\\
\bottomrule
\end{tabular}
\label{tab:paired-infsup-pilot}
\end{table}

Thus trial-only enrichment is insufficient beyond coercivity: an economical
trial direction can make the discrete equations singular when the tests do
not observe it. Paired enrichment recovers stability at constant dimension
when the operator image shares the recursive structure; generic polynomial
tests recover it only after resolving the oscillation. This is a controlled
PDE mechanism test, not yet a claim against mature DPG or Trefftz methods on a
natural benchmark. Moreover, the omitted component's ordinary $L^2$ norm
decreases from $.101$ at depth one to $.00176$ at depth seven even though its
graph-norm residual remains $1/\sqrt2$. The present advantage is therefore
sharp residual control and stable representation, not a uniform claim about
$L^2$ error. A multidimensional convection- or wave-dominated benchmark with
matched DPG/Trefftz controls is the next required experiment.

\paragraph{Impedance Helmholtz control.}
We next consider unit-square scattering,
$-\Delta u-k^2u=0$, with impedance data chosen so that
$u(x,y)=\exp(ik(d_1x+d_2y))$ and
$d=(\cos(.37\pi),\sin(.37\pi))$. This is a natural Trefftz positive control:
the plane wave is one exact trial direction and its impedance traces are
paired test directions on the four edges. Table~\ref{tab:helmholtz-paired}
compares tensor Legendre trial approximation with polynomial boundary tests.
The degrees are selected independently for one-percent $L^2$ error and for a
discrete visibility constant of $.99$.

\begin{table}[t]
\centering
\caption{Polynomial and paired Trefftz representations for the impedance
Helmholtz control. Polynomial trial dimension grows quadratically with
wavenumber at fixed accuracy, while boundary-test dimension grows linearly.
The exact Trefftz trial and four paired traces have constant dimension.}
\begin{tabular}{rrrrrr}
\toprule
$k$ & polynomial degree & trial DOFs & test degree & test DOFs & paired trial/test DOFs\\
\midrule
4  & 4  & 25   & 2  & 12  & 1/4\\
8  & 7  & 64   & 4  & 20  & 1/4\\
16 & 11 & 144  & 8  & 36  & 1/4\\
32 & 19 & 400  & 15 & 64  & 1/4\\
64 & 35 & 1296 & 30 & 124 & 1/4\\
\bottomrule
\end{tabular}
\label{tab:helmholtz-paired}
\end{table}

This control separates established structure from the new methodology. The
one-coordinate approximation is the classical Trefftz advantage, not a new
consequence of Schur--Riesz analysis. The present framework instead compares
that direction intrinsically with the current trial space, computes its exact
residual gain, and rejects it unless the test space supplies the required
inf--sup visibility. At $k=64$, 124 generic polynomial edge tests are needed
to reach $.99$, whereas four matched traces give one. A heterogeneous medium,
where no exact plane wave is available and candidates must compete, remains
the decisive next test.

We performed that test on a locked finite-difference discretization with a
smooth heterogeneous refractive inclusion and a localized incident-wave
source. Candidate sets comprised 64 tensor sine modes, 72 windowed plane waves
over prescribed directions and scales, their union, and a 40-vector Krylov
control. Every adaptive decision used quotient-orthogonalized operator images
and exact residual gain; the direct solution was reserved for auditing.

\begin{table}[t]
\centering
\caption{Forty-direction heterogeneous Helmholtz competition. The union finds
complementary polynomial and wave innovations and gives the smallest operator
residual, but the unpreconditioned residual does not sharply control relative
$L^2$ solution error.}
\begin{tabular}{lrrrr}
\toprule
space & dimension & polynomial/wave directions & relative residual & relative $L^2$ error\\
\midrule
polynomial & 40 & 40/0  & .697 & .925\\
windowed waves & 40 & 0/40  & .683 & .964\\
union & 40 & 22/18 & .439 & .858\\
Krylov & 33 & 0/0 & .526 & .961\\
\bottomrule
\end{tabular}
\label{tab:heterogeneous-helmholtz}
\end{table}

The mixed result is a useful falsification of residual-only reasoning. The
nearest absolute eigenvalue is $6.66$, whereas the operator norm is about
$1.56\,10^4$; moreover
$\|f\|/(\sigma_{\min}\|u\|)=10.60$. Hence the union's residual gives only the
vacuous relative $L^2$ bound $10.60(.439)=4.65$. Schur selection successfully
finds complementary innovations, but it cannot repair a globally unsuitable
test norm.

We therefore repeated the locked competition under two operator-adapted
geometries. The ideal test norm whitens the residual by $|A|^{-1}$ and makes
its norm equal to solution error; it is an oracle ceiling because its exact
application requires solving the original problem. The practical variant
inverts $|A|$ only on the 24 eigenvectors nearest resonance and applies one
bulk scale on the complement. This is a standard deflation principle, but its
use here is dictated by the measured inf--sup failure rather than introduced
as an unrelated solver heuristic.

\begin{table}[t]
\centering
\caption{Effect of the residual geometry on 40-direction selection. Entries
are relative $L^2$ solution errors; the practical norm uses 24 deflated
near-resonant modes.}
\begin{tabular}{lrrr}
\toprule
candidate space & raw residual & deflated test norm & ideal test norm\\
\midrule
polynomial & .925 & .550 & .506\\
windowed waves & .964 & .713 & .493\\
polynomial--wave union & .858 & .317 & .209\\
Krylov control & .961 & .950 & .923\\
\bottomrule
\end{tabular}
\label{tab:heterogeneous-helmholtz-norms}
\end{table}

The practical geometry closes about $84\%$ of the raw-to-ideal error gap for
the union: $(.858-.317)/(.858-.209)=.834$. It selects 19 polynomial and 21
wave directions, confirming that neither family alone explains the result.
Thus operator-adapted stability and complementary approximation structure are
both necessary in this test. The remaining questions are whether a local or
matrix-free DPG norm reproduces the deflated result, whether the advantage
persists across media and wavenumbers, and whether its setup cost is competitive.

\paragraph{Locked noncoercive suites.}
We froze the candidate and geometry rules and varied the PDE data. The
Helmholtz suite contains three wavenumbers ($16,24,32$), Gaussian and layered
refractive media, and two source directions, giving twelve cases. The
convection--diffusion suite contains three diffusion parameters
($.02,.01,.005$) and two source locations. Its practical geometry uses a fixed
incomplete factorization rather than an exact inverse. Every run uses forty
directions; reference solves enter only the reported $L^2$ audit.

\begin{table}[t]
\centering
\caption{Locked noncoercive suites. Values are median relative $L^2$ errors,
with ranges in parentheses. The practical mixed space beats both corresponding
single-family controls in all 18 cases.}
\begin{tabular}{lrr}
\toprule
selection rule & Helmholtz (12) & convection--diffusion (6)\\
\midrule
raw mixed & .981 $(.889,1.002)$ & .110 $(.0845,.205)$\\
practical polynomial only & .452 $(.0876,.998)$ & .414 $(.330,.489)$\\
practical structured only & .768 $(.297,.922)$ & 1.000 $(1.000,1.000)$\\
practical mixed & .361 $(.0805,.910)$ & .0746 $(.0540,.150)$\\
ideal mixed & .181 $(.0524,.737)$ & .0455 $(.0355,.0611)$\\
\bottomrule
\end{tabular}
\label{tab:locked-noncoercive-suites}
\end{table}

The result separates geometry from approximation class. Operator-adapted
geometry is insufficient by itself because each single family remains worse
than the union; a rich union is insufficient under the raw geometry. The
Helmholtz mixed spaces contain on average 20.7 polynomial and 19.3 wave
directions. The convection spaces contain 32--36 polynomial and four--eight
outflow-layer directions: layers are ineffective alone but provide useful
conditional innovations after the global response is represented. Thus the
Schur quotient measures complementarity rather than simply preferring a
favored function class.

These suites establish repeatability across controlled PDE variations, not
application-scale generality. The data are procedural, the Helmholtz deflation
uses explicitly computed eigenvectors, and the convection test uses an
assembled incomplete factorization. Three-dimensional benchmarks, local test
computation, and mature DPG and Trefftz controls remain required.

\paragraph{A parabolic width audit.}
To test the width-theoretic motivation beyond a stationary problem, we applied
the same exact Schur-innovation ranking to a locked heterogeneous-diffusion
snapshot family.  Training and audit centers and times were disjoint, every
method retained twelve directions, and the reference snapshots entered only
the post-selection error audit.  The candidates were Fourier functions,
eigenfunctions of the heterogeneous diffusion operator, or their union.

\begin{table}[t]
\centering
\caption{Parabolic snapshot audit at twelve retained directions.  The mixed
space provides the smallest held-out error, showing that operator modes and
Fourier functions contain complementary innovations.}
\begin{tabular}{lrr}
\toprule
candidate functions & training error & audit error\\
\midrule
Fourier & .0110 & .0131\\
operator eigenfunctions & .0196 & .0079\\
mixed & .0048 & .0036\\
\bottomrule
\end{tabular}
\label{tab:cross-class-width-audit}
\end{table}

Table~\ref{tab:cross-class-width-audit} supports a deliberately limited
conclusion.  The Schur criterion selects a compact fixed space for this
parabolic family, and the mixed space reduces held-out error by $72\%$ relative
to Fourier functions and by $54\%$ relative to the operator modes.  This does
not establish a general parabolic convergence advantage; it verifies that the
innovation criterion can identify complementary structure on unseen
parameters when a compact fixed linear representation is appropriate.

\paragraph{Flow-composed enrichment under SUPG stabilization.}
To separate c-refinement from approximate DPG test geometry, we next use a
conforming fine-grid $P_1$ reference discretization of
\[
 -\epsilon\Delta u+b\cdot\nabla u+.05u=f,
 \qquad b(x,y)=(1,.55\sin(2\pi x)),
\]
with streamline-upwind/Petrov--Galerkin stabilization. All reduced trial
spaces are embedded in this common reference space and use the same incomplete
factorization residual transform. The h candidates are hierarchical local
tents, p candidates are smooth cell bubbles, and c candidates have the form
\[
 \chi(x,y)\,\rho\!\left(y-
   \frac{\alpha}{2\pi}\{1-\cos(2\pi x)\}\right),
\]
where the inner argument is a first integral of the prescribed velocity when
$\alpha=.55$. It is obtained from the PDE coefficient, not from the reference
solution. Six cases combine three diffusion parameters and two source
locations; every policy starts from the same nine-dimensional backbone and is
limited to 50 accepted enrichments.

\begin{table}[t]
\centering
\caption{Locked embedded-SUPG enrichment study (six cases). Every row except
the exhausted p-only candidate family has terminal dimension 59. The last three
rows isolate whether the composition follows the prescribed flow.}
\label{tab:supg-flow-composition}
\begin{tabular}{lrrr}
\toprule
policy or ablation & median error & worst error & median cells\\
\midrule
h & .246 & .359 & 166\\
hp & .237 & .305 & 53.5\\
c & .175 & .227 & 16\\
hc & \textbf{.118} & .160 & 62.5\\
hpc & .123 & \textbf{.151} & 34\\
\midrule
hpc, correct flow composition & .123 & .151 & 34\\
hpc, straight composition & .191 & .271 & 32.5\\
hpc, reversed-flow composition & .218 & .290 & 34\\
\bottomrule
\end{tabular}
\end{table}

The paired comparisons are uniform: hc improves upon h in all six cases and
hpc improves upon hp and every single-family policy in all six. Their median
error ratios are $.499$ and $.526$, respectively. Correct flow composition
also wins all six structural ablations; straight and reversed maps increase
median error by factors $1.56$ and $1.78$. Median stability constants remain
of the same order ($.032$, $.030$, and $.035$), so conditioning does not explain
the separation. This is the clearest evidence here that c-refinement can use
known operator geometry to reduce approximation error and mesh subdivision in
a noncoercive problem. It remains a controlled reduced-space experiment on a
fixed fine SUPG reference mesh, not yet a comparison with production adaptive
SUPG software or a proof of asymptotic complexity.

We then replace the abstract h-cells by a conforming adaptive triangular mesh.
The fine $P_1$ grid is retained only as a common integration and independent
audit space.  Each h-action refines a marked triangle conformingly; p and c
functions are independent enrichment coordinates.  Before every solve, exact
discrete redundancies are quotiented by a rank-revealing factorization.  A
proposed h-block or single p/c function is accepted according to transformed
residual reduction per added quotient dimension, subject to a common stability
floor.  Thus the 80-coordinate budget counts represented directions rather
than labeled, possibly redundant functions.

\begin{table}[t]
\centering
\caption{Conforming adaptive-mesh SUPG study on the six locked cases.  Errors,
triangles, and stability ratios are medians; every h-containing policy has
median terminal dimension 80 (79.5 for h).}
\label{tab:adaptive-supg-mesh}
\begin{tabular}{lrrrr}
\toprule
policy & median error & worst error & triangles & stability\\
\midrule
h & .0945 & .286 & 182.5 & .427\\
hp & .0920 & .286 & 158.5 & .423\\
c & .317 & .368 & 32 & .479\\
hc & .0533 & .0631 & 151.5 & .421\\
hpc & \textbf{.0499} & .0747 & \textbf{141} & .428\\
\bottomrule
\end{tabular}
\end{table}

Hpc improves upon h and hp in all six cases; its median error is smaller by
factors $1.89$ and $1.84$, while it uses $23\%$ fewer triangles than h.  Hc
improves upon h in five cases and has the smallest worst-case error.  The
nearly identical median stability ratios show that quotienting has separated
the approximation gain from coordinate redundancy and conditioning.  The
present implementation is deliberately transparent rather than optimized:
after caching operator images and using incremental rank-one Schur tests,
median measured time is $1.43$ seconds for hpc and $.37$ seconds for h.  This is
a $2.62$-fold hpc speedup over the initial explicit implementation, with maximum
terminal-error change below $9,10^{-16}$; candidate auditing remains the main
overhead.

The closed-form flow label is not required.  In a final anti-oracle audit, we
compute it automatically from the supplied velocity coefficient by tracing
the characteristic equation
\[
 \frac{dY}{dx}=\frac{b_y(x,Y)}{b_x(x,Y)},\qquad Y(0)=\eta,
\]
and numerically inverting the inflow-to-current flow map.  The same fixed
profile centers, widths, quotient threshold, and selection rule are then used
without access to a reference solution.  Four prescribed fields include the
original sinusoid, a two-scale field, a state-dependent field, and a field with
variable horizontal speed.  Combining two diffusion values and two source
locations gives 16 locked cases.

\begin{table}[t]
\centering
\caption{Automatic-characteristic audit. Entries are median relative $L^2$
errors over four diffusion/source cases for each velocity field.}
\label{tab:automatic-characteristics}
\begin{tabular}{lrrrr}
\toprule
velocity field & h & hp & hc & hpc\\
\midrule
sinusoidal & .0945 & .0920 & .0517 & \textbf{.0499}\\
two-scale & .0930 & .0924 & \textbf{.0472} & .0505\\
state-dependent & .0957 & .0953 & \textbf{.0471} & .0515\\
variable horizontal speed & .0897 & .114 & \textbf{.0466} & .0525\\
\midrule
all 16 cases & .0931 & .0978 & \textbf{.0476} & .0511\\
\bottomrule
\end{tabular}
\end{table}

The better of hc and hpc improves upon the better of h and hp in all 16 cases,
with median error ratio $.567$.  Hc improves upon h in all 16, and hpc improves
upon both h and hp in all 16.  Median triangle counts are 178.5, 163.5, 154,
and 140 for h, hp, hc, and hpc, while median stability ratios remain
$.487$--$.500$.  This removes the closed-form-coordinate concern: the operator
coefficient $b$, which is part of the PDE data and is already used by SUPG,
is sufficient to construct the c-coordinate algorithmically.  The remaining
generality questions concern recirculating fields without a global inflow
label and higher-dimensional characteristic geometry, not solution leakage.

Theorem~\ref{thm:characteristic-coordinate-error} makes the numerical-flow
dependence explicit.  If the characteristic map remains invertible with
inverse constant $m^{-1}$, its coordinate error is bounded by
$m^{-1}\delta_\Phi+\delta_I$.  Lipschitz composition transfers this error to
the c-family, and discrete inf--sup stability then gives
\[
 \text{total error}
 \ \lesssim\ \text{best h/c approximation}
 +\text{flow-coordinate error}
 +\text{SUPG consistency error}
 +\text{algebraic error}.
\]
The proof also retains exact monotone Schur gain for every accepted nested
enrichment.  This is a conditional quasi-optimality result, not an unconditional
claim that c-refinement always has lower complexity; the latter requires the
solution to possess transported regularity.

We validate the new term independently on the variable-speed field.  With ODE
tolerance $10^{-7}$, increasing the number of inflow labels from 31 to
61, 121, and 241 reduces the maximum coordinate error from
$1.22,10^{-3}$ to $3.11,10^{-4}$, $7.48,10^{-5}$, and
$1.79,10^{-5}$, consistent with second-order inverse interpolation.  Over the
same sweep the audited solution error changes only from $.04970$ to $.04967$.
At 241 labels, tightening ODE tolerance from $10^{-3}$ to $10^{-5}$ reduces
coordinate error from $2.35,10^{-3}$ to $2.71,10^{-5}$; further tightening
has negligible effect.  Thus characteristic error is controlled and the
observed floor comes from the remaining approximation/discretization terms.

\begin{figure}[t]
\centering
\includegraphics[width=.84\textwidth]{figures/characteristic_error_decomposition.pdf}
\caption{Validation of the characteristic-coordinate term in
Theorem~\ref{thm:characteristic-coordinate-error}.  Left: coordinate error
separates inverse-label resolution from ODE tolerance and follows the predicted
second-order interpolation slope once integration error is small.  Right: the
total adaptive solution error reaches a stable floor as the characteristic
term is reduced.}
\label{fig:characteristic-error-decomposition}
\end{figure}

The coordinate estimate becomes operational when combined with the
operator-compatible Schur--Riesz theorem.  We freeze the initial polynomial
space, form the ideal characteristic block using a highly resolved flow map,
and compute its captured residual fraction $\gamma$.  Before solving any
enriched problem, the numerical flow map supplies the block perturbation
$\varepsilon_C$ and quotient SVD supplies $A_C$.  The resulting predicted
residual ratio is
\[
 \rho_{\rm cert}=\sqrt{1-\gamma}
 +\frac{\varepsilon_C}{\sqrt{A_C}}\sqrt{\gamma}.
\]
Table~\ref{tab:operator-compatible-certificate} compares this prospective
quantity with an independent enriched solve.  A spectral cutoff of $.1$
retains 33 stable innovations and certifies even the coarsest computed flow;
with 241 labels its bound $.4806$ is close to the observed ratio $.4786$.
Retaining 56 directions at coarse flow resolution is correctly left
uncertified because transformation error is large relative to the weakest
innovation.  Refining the flow restores certification.  Straight and
deliberately incorrect flow coordinates are rejected.  They may still reduce
the finite-dimensional residual, but the theorem makes no transferable
guarantee for them.

\begin{table}[t]
\centering
\caption{Prospective operator-compatible enrichment audit.  The decision uses
only the frozen baseline, the supplied velocity field, and the numerical flow
map.  Acceptance requires $\rho_{\rm cert}<1$; ``observed'' is computed only
after that decision.}
\label{tab:operator-compatible-certificate}
\begin{tabular}{lrrrrrrr}
\toprule
coordinate & labels & cutoff & rank & $\gamma$ &
$\varepsilon_C/\sqrt{A_C}$ & $\rho_{\rm cert}$ & observed\\
\midrule
computed & 31  & .10 & 33 & .7710 & .1484 & .6089 & .4785\\
computed & 241 & .10 & 33 & .7710 & .0023 & .4806 & .4786\\
computed & 31  & .03 & 53 & .7782 & .4810 & .8952 & .4710\\
computed & 241 & .03 & 53 & .7782 & .0075 & .4775 & .4709\\
computed & 31  & .01 & 56 & .7789 & .9245 & 1.2861 & .4703\\
computed & 241 & .01 & 56 & .7789 & .0144 & .4830 & .4702\\
straight & 241 & .10 & 33 & .7710 & 13.6689 & 12.4804 & .5906\\
wrong flow & 241 & .10 & 33 & .7710 & 14.1400 & 12.8941 & .9435\\
\bottomrule
\end{tabular}
\end{table}

This audit also explains why testing an entire rich neighborhood at once was
previously ineffective.  With all 78 quotient directions retained,
$A_C=1.87\,10^{-7}$ and no flow resolution in the sweep certifies the block.
The discarded tail contributes only about $.009$ of additional captured
residual fraction, yet dominates the perturbation amplification.  Quotient
spectral truncation therefore removes unstable redundancy while preserving
the useful operator-aligned subspace; here it is part of the theorem rather
than an empirically tuned enrichment heuristic.

The same principle extends beyond transport, but its certificate must respect
the operator class.  We test the symmetrizable nonnormal eigenproblem
$-u''+\mathrm{Pe}\,u'+12\sin(2\pi x)u=\lambda u$ with centered differences.
Its diagonal symmetrizer is determined from the two operator off-diagonals.
The c-space applies the corresponding exponential gauge to sine functions;
the p-control uses the same number of untransformed sine functions.  No
eigenvector enters either construction.  Table~\ref{tab:nonnormal-spectrum}
shows the 16-dimensional audit.  The weighted residual from
Theorem~\ref{thm:operator-compatible-spectrum} identifies the target
eigenvalue for every c-space, whereas the p-space loses identification as
nonnormality increases.  The generic Euclidean certificate is much looser,
showing that the operator-induced metric, not enrichment alone, carries the
guarantee.

\begin{table}[t]
\centering
\caption{Nonnormal eigenvalue audit at dimension 16.  ``Error'' is absolute
eigenvalue error; the weighted and Euclidean columns are rigorous enclosure
radii.  The target spectral gap is approximately $33.14$, so a weighted radius
below half this value uniquely identifies the target.}
\label{tab:nonnormal-spectrum}
\begin{tabular}{rrr r r c}
\toprule
$\mathrm{Pe}$ & family & error & weighted & Euclidean & identified\\
\midrule
4  & p & $8.30\,10^{-4}$ & 3.081 & 37.66 & yes\\
4  & c & $8.0\,10^{-6}$  & .021 & .217 & yes\\
8  & p & .00549 & 17.96 & 937.4 & no\\
8  & c & $8.1\,10^{-5}$ & .060 & 2.208 & yes\\
12 & p & .1793 & 184.4 & $1.56\,10^4$ & no\\
12 & c & .00105 & .271 & 18.98 & yes\\
16 & p & 3.579 & 1632 & $2.16\,10^5$ & no\\
16 & c & .00886 & 1.225 & 135.9 & yes\\
\bottomrule
\end{tabular}
\end{table}

The linear enclosure is reliable but not a sharp eigenvalue-error estimator:
its median effectivity for c-spaces is about $1.16\,10^3$.  Its role is
spectral identification.  The complementary Schur enclosure in
Theorem~\ref{thm:complementary-schur-spectrum} repairs this loss by retaining
the unresolved spectral distribution.  We test meshes with 120 and 240
interior points, four potentials (including a discontinuous step and a held-out
smooth potential), four Peclet numbers, and dimensions 8, 16, 24, and 32.
Table~\ref{tab:schur-spectrum} summarizes all 128 matched pairs.  Above the
numerical floor, median upper-bound effectivity is essentially one and its
99th percentile remains below 1.017.  The c-space wins every matched case,
with median c-to-p eigenvalue-error ratio $7.10\,10^{-5}$.

\begin{table}[t]
\centering
\caption{Complementary Schur spectral audit.  Effectivity statistics exclude
28 cases whose eigenvalue error is below $10^{-9}$; all 256 complementary
problems satisfy the required positive complement gap.}
\label{tab:schur-spectrum}
\begin{tabular}{lrrrr}
\toprule
family & matched wins & median error & median effectivity & 99th percentile\\
\midrule
p & 0/128   & $.00507$ & $1.000001$ & $1.0166$\\
c & 128/128 & $9.00\,10^{-7}$ & $1.000000$ & $1.0026$\\
\bottomrule
\end{tabular}
\end{table}

This result distinguishes the source of each improvement.  The exponential
composition produces the approximation gain; the operator-induced metric
turns the nonnormal problem into a self-adjoint one; and complementary Schur
elimination makes the certificate sharp.  None of the three alone supplies
the complete conclusion.

Finally, forming the dense complement is unnecessary.  We apply
$C-\rho I$ through the projected operator
$(I-yy^*)(H-\rho I)(I-yy^*)$ and solve for the Schur correction by conjugate
gradients.  A sparse unit-shifted operator preconditioner regularizes the
target mode and has linear storage.  Equation~\eqref{eq:matrix-free-schur-radius}
adds the iterative residual to the upper certificate.  The complement-gap
input is computed independently by Proposition~\ref{prop:sturm-secular-gap}:
Sturm bisection brackets the first two operator eigenvalues and scalar secular
bisection brackets the first compressed eigenvalue.  Thus no dense complement
or assumed gap enters Table~\ref{tab:matrix-free-schur}.  Four iterations
suffice at every resolution.  At 64,000 unknowns the method uses 13.0 MB,
whereas explicitly storing the complement would require about 31.25 GB; the
computed gap lies in $[33.4602890508,33.4602890586]$.

\begin{table}[t]
\centering
\caption{Fully matrix-free complementary Schur scaling.  Time includes the
Sturm--secular gap certificate, sparse preconditioner construction, and Schur
solve; it excludes construction of the eight-dimensional Ritz vector and the
independent reference spectrum.  Dense memory is the storage avoided.}
\label{tab:matrix-free-schur}
\begin{tabular}{rrrrrrr}
\toprule
$N$ & CG its. & effectivity & time (s) & matrix-free MB & dense MB & saving\\
\midrule
500   & 4 & 1.000004 & .020 & .102 & 1.90 & $18.7\times$\\
4,000 & 4 & 1.000003 & .171 & .815 & 122.0 & $149.8\times$\\
16,000& 4 & 1.000004 & .701 & 3.26 & 1,952.9 & $599.2\times$\\
64,000& 4 & 1.000026 & 2.926 & 13.04 & 31,249.0 & $2,397.1\times$\\
\bottomrule
\end{tabular}
\end{table}

This experiment establishes a complete linear-memory certificate for the
structured tridiagonal class: gap verification, Schur correction, and
algebraic-error control are all matrix-free with respect to the complement.
For general sparse operators, Proposition~\ref{prop:bordered-inertia-gap} supplies the corresponding
dimension-independent formulation.  We test it on a two-dimensional
symmetrized convection--diffusion operator with a variable potential.  A
Until now reverse Cuthill--McKee ordering supplied a baseline.  The final
implementation uses geometric nested dissection, reuses one symbolic analysis
for every bisection shift, and applies SuiteSparse LDL to the bordered
inertia.  The bordered sparse input contains only the original five-point
operator plus two vectors.  Through 4,096 unknowns, an independent projected
eigensolve lies inside every bracket.  The larger cases test factorization
scaling without constructing that reference complement.

\begin{table}[t]
\centering
\caption{Optimized sparse bordered-inertia scaling.  All runs reuse one
symbolic analysis for 38 numeric LDL factorizations and produce brackets of
width $7.45\,10^{-9}$.  Memory is estimated from sparse $L$ and diagonal
storage; speedup compares nested dissection with the RCM baseline.}
\label{tab:bordered-inertia-2d}
\begin{tabular}{rrrrrrr}
\toprule
grid & $N$ & RCM time & ND time & speedup & RCM MB & ND MB\\
\midrule
$32^2$  & 1,024  & .0079 & .0063 & $1.25\times$ & .279 & .202\\
$64^2$  & 4,096  & .0716 & .0400 & $1.79\times$ & 2.12 & 1.09\\
$128^2$ & 16,384 & 1.039 & .292 & $3.56\times$ & 16.47 & 5.61\\
$192^2$ & 36,864 & 5.078 & 1.018 & $4.99\times$ & 55.05 & 15.30\\
$256^2$ & 65,536 & 15.994 & 2.390 & $6.69\times$ & 129.87 & 27.78\\
\bottomrule
\end{tabular}
\end{table}

This closes both the mathematical and prototype implementation gap between the
structured Sturm experiment and general sparse FEM matrices.  At 65,536
unknowns, nested dissection and symbolic reuse reduce time by $6.69\times$ and
factor memory by $4.68\times$, to 2.39 seconds and 27.8 MB.  Sparse fill still
grows with grid size, as expected in two dimensions.  Parallel sparse LDL is a
production extension; the present SuiteSparse LDL kernel is serial, so no
parallel claim is made here.

\begin{figure}[t]
\centering
\includegraphics[width=\textwidth]{figures/adaptive_supg_hpc_meshes.pdf}
\caption{Representative conforming meshes at $\epsilon=.005$ and source
$y=.65$.  All policies use the same 80-coordinate quotient budget.  Flow-based
enrichments allow hc and hpc to retain a coarser mesh in regions where h and hp
must resolve the transported structure geometrically.  The error shown above
each panel is audited in the common fine-grid $L^2$ norm.  Color shows mean
local squared residual energy on a common logarithmic scale; bright cells carry
the largest unresolved contribution.}
\label{fig:adaptive-supg-meshes}
\end{figure}

\begin{figure}[t]
\centering
\includegraphics[width=.88\textwidth]{figures/adaptive_supg_hpc_errors.pdf}
\caption{Adaptive SUPG error behavior. Left: convergence against intrinsic
quotient dimension for the representative strongly convective case. Right:
terminal errors across the locked diffusion and source-location sweep; points
are medians and bands span the two source locations.  Hc and hpc separate from
their polynomial counterparts during refinement and retain the advantage as
diffusion decreases.}
\label{fig:adaptive-supg-errors}
\end{figure}

\begin{figure}[t]
\centering
\includegraphics[width=.92\textwidth]{figures/noncoercive_convergence_cost.pdf}
\caption{Error convergence and measured selection cost for the locked
noncoercive suites. Curves show medians and interquartile ranges. The
operator-adapted geometry improves the convergence trajectory, not only its
terminal point. Reported time includes construction of the 24-mode Helmholtz
deflation or convection incomplete factorization, cached operator images, and
forty selection steps; it excludes the independent reference audit.}
\label{fig:noncoercive-convergence-cost}
\end{figure}

The implementation caches all candidate operator images and vectorizes the
Schur gain calculation. Against the original sequential implementation, it
reproduces every selected direction and terminal error exactly in six parity
tests, with median speedup $2.22$ (range $2.21$--$2.26$). At forty directions,
the median raw/operator-adapted totals are $.160/.185$ seconds for Helmholtz
and $.185/.189$ seconds for convection--diffusion. Cached trial and operator
images require approximately $9.7$ MB and $8.2$ MB, respectively; the 24
Helmholtz deflation vectors add about $.4$ MB. Thus the present small-grid
study shows a substantial implementation speedup and modest incremental cost
for the adapted geometry, but not an end-to-end speed advantage over mature
PDE solvers.

No adaptive-mesh comparison is reported for these suites because all methods
use the same structured finite-difference grid and global candidate functions;
drawing distinct ``with c'' and ``without c'' meshes would be misleading.
Meaningful mesh figures require a local DPG or Trefftz finite-element
discretization in which h, p, and paired c-actions genuinely generate different
trial--test meshes. This remains separate from the function-space mechanism
validated here.

\paragraph{Local optimal-test Helmholtz prototype.}
As a first local test, we discretize a one-dimensional forced Helmholtz problem
in a fine ambient finite-element space and define the trial-to-test map
$Tv=R^{-1}Av$ using the $H^1$ test Riesz matrix. Starting from eight cells, the
adaptive policies may split a cell (h), add a quadratic bubble (p), or add the
real or imaginary part of a windowed local Helmholtz wave (c). Every proposal
is ranked by its quotient residual gain and rejected if the normalized minimum
singular value of the enlarged trial-to-test map falls below $10^{-3}$.

\begin{table}[t]
\centering
\caption{Local optimal-test Helmholtz prototype. Terminal values use forty
actions. The matched row reports the first state below relative $L^2$ error
$.1$.}
\begin{tabular}{lrrrrr}
\toprule
policy & trial dimension & cells & relative error & stability ratio & time (s)\\
\midrule
h, terminal & 47 & 48 & .140 & $7.97\,10^{-3}$ & .587\\
hp, terminal & 47 & 26 & .0921 & $4.40\,10^{-3}$ & .613\\
c, terminal & 23 & 8 & .830 & $2.58\,10^{-2}$ & .084\\
hpc, terminal & 47 & 15 & .0775 & $1.09\,10^{-3}$ & .807\\
hp, first below $.1$ & 46 & 25 & .0927 & $4.53\,10^{-3}$ & .577\\
hpc, first below $.1$ & 30 & 10 & .0919 & $2.23\,10^{-3}$ & .317\\
\bottomrule
\end{tabular}
\label{tab:local-dpg-helmholtz}
\end{table}

\begin{figure}[t]
\centering
\includegraphics[width=.9\textwidth]{figures/local_dpg_helmholtz_mesh_convergence.pdf}
\caption{Local optimal-test Helmholtz convergence and terminal meshes. The c
action enriches a cell without splitting it. At matched error $.1$, hpc uses
35\% fewer trial coordinates and 60\% fewer cells than hp, and is $1.82$
times faster in this prototype. The c-only control fails, showing that local
waves become useful only after polynomial and spatial innovations represent
the complementary response.}
\label{fig:local-dpg-helmholtz}
\end{figure}

This is the first experiment here with genuinely different adaptive meshes,
but it remains a controlled one-dimensional optimal-test model. The fine
ambient test solve is global, its cost is included but not matrix-free, and the
reported stability ratio is discrete. A publishable superiority claim still
requires two-dimensional broken-test DPG or Trefftz-DG, local Fortin control,
and matched mature implementations.

We then locked the local rule across twelve cases: wavenumbers $24,36,48$, two
source locations, and constant or layered media. Under the original $H^1$
test geometry, median terminal hp/hpc errors are $.717/.303$ and hpc is better
in ten cases. The two failures identify separate defects: oscillatory actions
can displace a delayed h-refinement under a fixed budget, and near resonance a
smaller $H^1$ residual need not imply smaller $L^2$ error.

The repair follows the inf--sup diagnosis. We replace the selection norm by an
$L^2$-targeted residual transform that inverts the twelve discrete modes
nearest resonance and uses one bulk scale on their complement. No candidate,
budget, or stability threshold is changed. Median hp/hpc errors fall to
$.238/.0926$, and hpc is better in eleven of twelve cases.

\begin{table}[t]
\centering
\caption{Locked local Helmholtz suite (12 cases). The shadow policy retains
the hp and hpc paths and deploys the one with smaller $L^2$-targeted residual.}
\begin{tabular}{lrrr}
\toprule
geometry and policy & median error & cases below hp & target $.2$ attained\\
\midrule
$H^1$ residual, hp & .717 & --- & 4/12\\
$H^1$ residual, hpc & .303 & 10/12 & 6/12\\
$L^2$-targeted, hp & .238 & --- & ---\\
$L^2$-targeted, hpc & .0926 & 11/12 & ---\\
$L^2$-targeted hp/hpc shadow & .0926 & 11/12, hp fallback in 1 & ---\\
\bottomrule
\end{tabular}
\label{tab:local-dpg-l2-patch}
\end{table}

The shadow decision is not based on the reference error. In every case, the
smaller transformed residual selects the smaller independently audited $L^2$
error. In the sole remaining hpc failure ($k=48$, right source, constant
medium), the transformed residuals are $.521$ for hp and $.584$ for hpc, so
the rule returns hp; their audited errors are respectively $.286$ and $.463$.
This restores the exact hp special case operationally while allowing c when
its conditional innovation is useful. It presently doubles path management;
shared factorization, candidate images, and hp-prefix computations should
reduce that overhead.

\paragraph{Two-dimensional broken-test DPG control.}
To determine whether the preceding conclusions survive a genuine local test
construction, we also solve a constant-velocity transport--reaction problem
in ultraweak form on the unit square. The trial variables comprise cellwise
polynomials and shared skeleton traces. The test space is discontinuous across
cells, and every test-space Riesz problem is solved element by element; no
global residual transform is used. Uniform meshes with $n=4,6,8,10$ provide
the h-sequence, polynomial degrees one and two provide p, and the c-family
contains cell-local flow-coordinate layers. Candidate selection uses the
$p+4$ broken test space. A birth is accepted only if it preserves the discrete
inf--sup threshold, removes at least one percent of the current residual
energy, and independently passes the same gain test in a nested $p+6$ test
space. The exact solution is used only for the final $L^2$ audit.

\begin{table}[t]
\centering
\caption{Prospective 2D broken-test DPG control for degree-two trial
polynomials. The hp and hpc columns use the same mesh and test spaces. The
nested audit accepts c only when its stable residual gain exceeds one percent;
zero accepted modes therefore means that the certified hpc method returns hp
exactly.}
\begin{tabular}{rrrrrr}
\toprule
$n$ & hp DOFs & hp $L^2$ error & accepted c & hpc $L^2$ error & inf--sup ratio\\
\midrule
4  & 240  & .06612 & 1 & .06614 & .04577\\
6  & 540  & .04717 & 0 & .04717 & .03200\\
8  & 960  & .03146 & 0 & .03146 & .02449\\
10 & 1500 & .02308 & 0 & .02308 & .01981\\
\bottomrule
\end{tabular}
\label{tab:broken-test-dpg-transport}
\end{table}

\begin{figure}[t]
\centering
\includegraphics[width=.86\textwidth]{figures/broken_test_dpg_transport.pdf}
\caption{Independent solution error and discrete inf--sup audit for the 2D
broken-test transport problem. Polynomial refinement converges, while the
certified hpc policy increasingly returns hp as the flow layer becomes
resolvable by the polynomial space. The near coincidence of the stability
curves confirms that rejected c-functions are not needed to maintain the
trial--test geometry.}
\label{fig:broken-test-dpg-transport}
\end{figure}

This experiment changes the interpretation of the earlier prototype in two
ways. First, SR certification remains operational with genuinely broken tests
and local Riesz solves. Second, nonpolynomial availability does not imply
nonpolynomial use: on this smooth aligned transport layer, p-refinement soon
absorbs the proposed c-directions. The one coarse accepted direction changes
the independent error by less than $3\times10^{-5}$, so it is not evidence of
application gain. A convincing DPG advantage for c-enrichment must therefore
use a problem whose operator-compatible structure remains expensive for hp,
not merely a transport problem whose layer is already resolved under h.

\paragraph{Conservative channelized-flow positive control.}
We next replace the constant velocity by locked random and SPE10-derived Darcy
fields \cite{christieblunt2001}. Each permeability is resolved on the trial
mesh, the pressure is recomputed, and neighboring cells share one normal flux
per face. The resulting RT0-type reconstruction has discrete divergence below
$3\,10^{-11}$. A c-proposal is a coupled volume--trace block: the global
profile $\phi(P(x))$ and its projected skeleton trace enter together.
Selection uses an adjoint graph norm, a nested richer test space, the frozen
one-percent gain rule, and the frozen global inf--sup floor $10^{-2}$.

\begin{table}[t]
\centering
\caption{All accepted coupled volume--trace enrichments in the 30-case
channelized DPG sweep. The other 26 cases return hp exactly. One accepted
global coordinate in each listed case produces a substantial independent
error reduction.}
\begin{tabular}{lrrrrr}
\toprule
field & $(n,p)$ & inf--sup & hp error & hpc error & reduction\\
\midrule
random--23 & $(4,1)$ & .0211 & .1179 & .0466 & 60.5\%\\
random--23 & $(4,2)$ & .0155 & .0841 & .0511 & 39.3\%\\
SPE10--20  & $(4,1)$ & .0250 & .1116 & .0444 & 60.2\%\\
SPE10--50  & $(6,1)$ & .0231 & .0671 & .0180 & 73.2\%\\
\bottomrule
\end{tabular}
\label{tab:channelized-broken-dpg}
\end{table}

The experiment isolates three necessary ingredients: conservative face fluxes
make the volume and skeleton operators compatible; the coupled proposal moves
the field and trace together; and the inf--sup gate prevents an uninformative
residual decrease from being mistaken for solution-error control. This is an
operator-aligned positive control: the Darcy field is prescribed by the PDE,
the candidate parameters come from a frozen grid, and the manufactured exact
solution enters only the independent audit. It is not yet evidence for an
arbitrary transport-generated family or a comparison with production DPG software.
Its value is diagnostic: four stable global innovations are
accepted and reduce independent error by $39$--$73\%$, whereas all 26
unsupported proposals reduce exactly to hp.

The present search is not yet computationally competitive.  Across the four
accepted cases, total prototype time is $32$--$39$ times the matched hp solve
(median $34.9$).  The median overhead over the 26 rejected cases is $1.00$
because the inf--sup gate terminates most searches before candidate assembly,
although a stable candidate that is subsequently rejected can still incur a
large search cost.  The bottleneck is the dense exhaustive projection of the
global candidate family, not the element-local DPG Riesz solves.  Matrix-free block
application, batched candidate images, and a coarse-to-fine screening rule are
therefore required before an application-scale cost claim is justified.

\paragraph{Natural many-query transport under one DPG operator.}
To remove the manufactured-solution and operator-mismatch limitations, we
finally fix one fine broken-test DPG operator for each of three SPE10 fields.
Twelve broad inflow modes are propagated offline through that operator.  Nine
previously unseen sharp inflows (three locations for each field) are then
solved in equal-dimensional global-polynomial, transported-response, and mixed
spaces.  The full DPG solution supplies the independent reference.  Thus every
reduced space uses the same bilinear form, test norm, right-hand side, and
reference operator; only its trial coordinates differ.  Conditional-residual
greedy selection is identical for all three candidate families.

\begin{table}[t]
\centering
\caption{Natural common-operator DPG study over nine unseen inflows. Entries
are median relative $L^2$ errors; parentheses give the observed range. The
mixed space beats the matched polynomial space in all nine cases at every
budget.}
\begin{tabular}{rrrr}
\toprule
coordinates & polynomial & transported & mixed\\
\midrule
8  & .3722 $(.1677,.7355)$ & .0292 $(.0158,.2374)$ & .0253 $(.0158,.0770)$\\
12 & .3729 $(.1949,.7628)$ & .0224 $(.0058,.4374)$ & .0171 $(.0047,.1315)$\\
16 & .3796 $(.1907,.7557)$ & .0224 $(.0058,.4374)$ & .0129 $(.0057,.0869)$\\
\bottomrule
\end{tabular}
\label{tab:natural-common-operator-dpg}
\end{table}

\begin{figure}[t]
\centering
\includegraphics[width=.98\textwidth]{figures/common_operator_dpg_solution_comparison_n16_spe10-50.pdf}
\caption{Representative SPE10--50 target on the refined $16\times16$ broken-test
mesh.  The full DPG reference, the 12-coordinate polynomial approximation, and
the 12-coordinate SR-selected mixed approximation use the same operator and
test norm.  Shared color limits make the failure of generic polynomials to
follow the channel-distorted front visible; the lower row shows that the mixed
space removes nearly all of that error.  Each discontinuous bilinear field is
evaluated at ten points per coordinate direction inside every element; the
visible interelement jumps therefore belong to the ultraweak trial space and
are not a cell-centre plotting artifact.}
\label{fig:common-operator-dpg-solutions}
\end{figure}

\begin{figure}[t]
\centering
\includegraphics[width=.88\textwidth]{figures/common_operator_dpg_convergence_cost.pdf}
\caption{Accuracy and measured many-query cost.  Left: median error over nine
held-out inflows as the reduced coordinate budget grows.  Right: cumulative
cost of repeated full DPG solves versus twelve offline transported responses
followed by reduced online solves.  The prototype amortizes its offline stage
after approximately twelve to thirteen targets.}
\label{fig:common-operator-dpg-cost}
\end{figure}

At 12 coordinates the median mixed-to-polynomial error ratio is $.0623$; at
16 it is $.0528$.  The median discrete inf--sup ratios at 12 coordinates are
$.0701$, $.0526$, and $.0530$ for polynomial, transported, and mixed spaces,
respectively, so the error separation is not obtained by accepting an unstable
trial space.  Offline construction of twelve transported responses takes
$.381$ seconds in this prototype.  A full DPG solve takes $.0314$ seconds,
whereas the median 12-coordinate online solve takes $.000240$ seconds, about
$131$ times faster.  Ignoring reusable assembly, the measured offline cost is
recovered after approximately 13 target solves.

This is the strongest two-dimensional evidence in the paper.  It tests a
natural zero-source transport boundary-value problem with no manufactured
interior solution, locked candidate families, held-out inflows, matched dimensions,
and one common DPG operator.  A pre-specified resolution check on SPE10--50
repeated the three held-out targets on a $16\times16$ mesh.  At twelve
coordinates the median polynomial and mixed errors were $.2162$ and $.0176$,
respectively, while the corresponding discrete inf--sup ratios were $.0431$
and $.0896$.  The mixed advantage therefore persists when the number of DPG
elements is quadrupled; it is not an artifact of the original $8\times8$
display.  The fixed operator was assembled once and reused exactly across
right-hand sides.  Its twelve offline solves took $20.18$ seconds, a full
target solve $1.69$ seconds, and a reduced online solve $.000924$ seconds.
The scope remains many-query linear transport.  A multi-field fine-mesh audit,
diffusion, independently implemented DPG controls, and matrix-free offline
construction remain open.

To separate memory scalability from approximation scalability, we also replaced
the dense broken-test matrix by an exact sparse block assembly and applied
right-diagonally preconditioned LSQR to the full reference solves.  Sparse and
dense operators agree exactly on the validation mesh; the preconditioner merely
rescales coordinates and leaves the least-squares problem unchanged.  This
permits the same locked SPE10--50 target to be evaluated through the native
$40\times40$ coefficient resolution.  Figure~\ref{fig:common-operator-resolution}
reports the best candidate satisfying the pre-specified discrete inf--sup floor
$.05$ at each resolution.

\begin{figure}[t]
\centering
\includegraphics[width=.96\textwidth]{figures/common_operator_dpg_resolution.pdf}
\caption{Resolution and sparse-cost audit for one locked SPE10--50 target.
Left: best stable error among budgets 8, 12, and 16 for one fixed polynomial
candidate family and twelve fixed transported responses.  Centre: the number of
coordinates accepted before the prospective stability floor rejects every
remaining direction.  Right: sparse full, offline, and reduced online costs.
The transported and mixed spaces improve on generic polynomials at every
resolution, but their errors are not monotone: refining the ambient DPG mesh
reveals channel and front structure not contained in the fixed broad-response
candidate family.  Thus sparse assembly resolves the memory bottleneck, while
resolution-robust enrichment remains an approximation problem.}
\label{fig:common-operator-resolution}
\end{figure}

This experiment prevents two overstatements.  First, a stability certificate
does not by itself make a fixed reduced family complete as the ambient mesh
is refined.  Second, a small decrease in DPG residual among already stable
coordinates need not decrease the independently measured physical error.  The
appropriate next theorem and algorithm must therefore couple the present
Schur--Riesz stability test to a resolution-aware approximation indicator; the
certificate should reject unstable directions, whereas approximation marking
must introduce new localized or sharper transported responses.

We tested that conclusion prospectively rather than tuning further on
SPE10--50.  Twelve boundary centres and the three mesh-scaled widths
$.10,.05,.025$ were frozen at $40\times40$.  Six unseen boundary families---two
fronts of unseen location and width, a Gaussian, a compact bump, a two-front
band, and an oscillatory trace---were crossed with two untouched coefficient
fields (random--23 and SPE10--70).  Table~\ref{tab:locked-resolution-audit}
compares twelve accepted coordinates under the same $.05$ stability floor.

\begin{table}[t]
\centering
\caption{Prospective native-resolution audit over twelve untouched
field--boundary pairs.  ``Wins'' compare the resolution-aware library with the
corresponding column method.  The stability ratio of every accepted
resolution-aware space exceeds $.05$ (median $.122$).}
\begin{tabular}{lrrr}
\toprule
method & median relative $L^2$ error & maximum error & resolution-aware wins\\
\midrule
global polynomial & .9780 & 1.2655 & 12/12\\
broad transported & .2635 & .9919 & 10/12\\
resolution-aware transported & .0602 & .2741 & ---\\
\bottomrule
\end{tabular}
\label{tab:locked-resolution-audit}
\end{table}

The result supports mesh-scaled enrichment but also identifies the remaining
gap.  In two of the twelve prospective cases the enlarged stable library
lowered the DPG residual yet increased the independent $L^2$ error relative to
the broad library (ratios $1.16$ and $1.04$).  Consequently neither the
Schur--Riesz floor nor residual decrease alone orders two already stable trial
spaces by physical error.  This paper is therefore competitive and
stable at native resolution, but not yet monotonically safe.  Closing that gap
requires an error quantity equivalent to the desired physical norm---for
example a verified DPG test norm or a goal-oriented dual correction---rather
than another candidate-selection heuristic.

The required bridge can be stated sharply.  If, for each admissible trial
space $W$, computable constants $0<\beta_W\leq M_W$ satisfy
\[
 \beta_W\|u-u_W\|_{L^2}\leq \|f-Bu_W\|_{V'}
 \leq M_W\|u-u_W\|_{L^2},
\]
then the comparison
\begin{equation}
 \frac{\|f-Bu_{W_{\rm new}}\|_{V'}}{\beta_{W_{\rm new}}}
 <
 \frac{\|f-Bu_{W_{\rm old}}\|_{V'}}{M_{W_{\rm old}}}
 \label{eq:physical-error-acceptance}
\end{equation}
certifies strict reduction of the physical error: the left side is an upper
bound for the new error and the right side a lower bound for the old error.
This implication separates two constants that must not be conflated.  The
reported Schur--Riesz ratio measures coefficient stability within $W$; it is
not, by itself, the global residual--$L^2$ equivalence pair
$(\beta_W,M_W)$.  Even the optimistic substitution of the normalized
Schur--Riesz ratios into Eq.~\eqref{eq:physical-error-acceptance} accepts none
of the thirty audited field--boundary pairs.  The fully rigorous comparison
is therefore non-operational with the current norm.  A near-optimal DPG test
norm or a dual-weighted residual must supply the missing sharp physical
constants.

For the discrete least-squares formulation there is a direct route to an
intrinsic algebraic quantity.  Write $Z=R_V^{-1/2}B$, let $c_W$ be the reduced
coefficient vector, and set $r_W=y-Zc_W$.  A correction representative
$d_W$ satisfies
\begin{equation}
 (Z^*Z)d_W=Z^*r_W, \qquad
 \|[d_W]\|_{H_0}:=\inf_{z\in\ker Z}\|d_W+z\|_{H_0},
 \label{eq:dual-correction}
\end{equation}
where $H_0$ is the volume $L^2$ Gram operator.  This quotient norm equals the
physical finite-element error only when the coefficient-to-function map is
compatible with $\ker Z$.  More generally, computable constants
$0<a_J\leq b_J$ must establish
\[
 a_J\|[d]\|_{H_0}\leq\|Jd\|_X\leq b_J\|[d]\|_{H_0},
\]
where $J$ maps coefficients to trial functions.  Thus a preconditioned Krylov
iterate first estimates an intrinsic quotient correction.  If a quotient
coercivity estimate gives
\[
 \|d_W-d_{W,k}\|_{H_0}\leq
 C_W\|Z^*(r_W-Zd_{W,k})\|,
\]
then
\[
 E_{W,k}^{\pm}=\|d_{W,k}\|_{H_0}
 \mathbin{\pm} C_W\|Z^*(r_W-Zd_{W,k})\|
\]
is a certified quotient interval (with the lower endpoint truncated at zero).
A physical discrete interval is $[a_JE^-_{W,k},b_JE^+_{W,k}]$, and
Theorem~\ref{thm:discrete-continuum-enclosure} adds $\eta_{\rm disc}$ to reach
the PDE solution.  This is the precise point at which quotient--Riesz
stability re-enters: it must control both the inverse normal operator and the
coefficient-to-function map, rather than merely screen the selected
candidate family.

\begin{figure}[t]
\centering
\includegraphics[width=.92\textwidth]{figures/physical_dual_diagnostic.pdf}
\caption{Physical dual-correction diagnostic at $40\times40$.  Solid curves
are Krylov estimates of the new-to-incumbent $L^2$ error ratio; dotted lines
are independent audited ratios and the black line is equality.  Clear
beneficial enrichments are identified within two to five iterations.  The
large harmful enrichment is rejected immediately.  The subtle case oscillates
near equality until roughly fifty iterations.  The figure evaluates a point
diagnostic; it does not establish quotient-to-function compatibility or the
remainder and discretization bounds required for a PDE-error certificate.}
\label{fig:physical-dual-diagnostic}
\end{figure}

We also tested the most direct remainder bound rather than assuming it was
sharp.  Lanczos estimates of the smallest nonzero singular value of the
right-scaled full DPG operator give global factors $C_W=5.17\,10^3$ for
random--23 and $8.91\,10^4$ for SPE10--70.  These make every correction
interval overlap.  Restricting the Rayleigh calculation to the
residual-generated Krylov spaces is still insufficient: the Ritz factor grows
as weak directions enter the space, and the intervals for the incumbent and
proposal overlap at every tested iteration.  Thus the point estimator in
Figure~\ref{fig:physical-dual-diagnostic} is predictive, but the naive
worst-case certification is vacuous.

This negative result identifies the appropriate Schur object.  Partition the
correction into physical volume and auxiliary trace coefficients,
$d=(d_0,d_\Gamma)$, and eliminate $d_\Gamma$ by least squares.  If
$P_\Gamma$ denotes orthogonal projection onto $\operatorname{range}Z_\Gamma$,
the physically relevant operator is
\[
 S_0=Z_0^*(I-P_\Gamma)Z_0
\]
on the $H_0$ quotient.  Its generalized coercivity constant measures the
least observable physical volume change after optimal trace compensation.
Unlike the full smallest singular value, it does not penalize weak trace or
gauge directions that carry no $L^2$ volume error.  A sharp certificate must
therefore estimate the complementary Schur pair $(S_0,H_0)$, preferably by a
matrix-free generalized Lanczos procedure.  The preceding results show why
neither the full-spectrum constant nor a residual-only Ritz restriction can
replace it.

We implemented this complementary calculation matrix-free, using a reusable
regularized pseudoinverse for the singular trace normal operator.  The trace
projection defects were $3.95\,10^{-7}$ and $1.35\,10^{-6}$ and the symmetry
defects were below $7.1\,10^{-15}$.  The physical remainder factors fell from
$5.17\,10^3$ to $5.71$ on random--23 and from $8.91\,10^4$ to $875.5$ on
SPE10--70.  Table~\ref{tab:physical-schur-decisions} shows the first iteration
at which the resulting intervals separate.

\begin{table}[t]
\centering
\caption{Physical complementary-Schur certification at $40\times40$.  The
decision uses only disjoint correction intervals.  The iteration entries are
unpreconditioned/preconditioned.}
\begin{tabular}{llrrr}
\toprule
field and target & decision & quotient ratio & audited ratio & iterations\\
\midrule
random--23, Gaussian & prefer resolution-aware & .177 & .177 & 1 / 1\\
random--23, oscillatory & prefer broad & 1.163 & 1.163 & 5 / 5\\
SPE10--70, compact & prefer resolution-aware & .109 & .087 & 50 / 10\\
SPE10--70, narrow front & prefer resolution-aware & .951 & 1.036 & 50 / 20\\
\bottomrule
\end{tabular}
\label{tab:physical-schur-decisions}
\end{table}

Here ``prefer'' has a precise but deliberately limited meaning: it identifies
the candidate having the smaller certified correction to the frozen full DPG
solution.  The last column gives unpreconditioned complementary-Schur CG and
cell-block preconditioned CG iterations, respectively.  The preconditioner is
assembled from the independent $4\times4$ physical volume blocks; it changes
only the Krylov trajectory, while the remainder is bounded with the same
complementary-Schur coercivity constant.  It therefore cannot weaken the
certificate.  It is neutral on the already easy random-field cases and reduces
the two SPE10 decisions from 50 iterations to 10 and 20, respectively.

The narrow-front row exposes the boundary of the quotient statement.  The
resolution-aware approximation has the smaller quotient-minimal correction,
but its directly audited volume error relative to the chosen full DPG
representative is $3.6\%$ larger than that of the broad approximation.  Thus a
Schur correction becomes a physical finite-element error certificate only
after quotient compatibility and the trial-norm inf--sup constants have been
verified.  Theorem~\ref{thm:discrete-continuum-enclosure} then adds the second
bridge, from the full discrete solution to the continuum PDE solution.  Until
both bridges are present, the operational rule must abstain rather than
conflate quotient distance, discrete physical error, and continuum error.

We next tested whether the discretization bridge can be made operational
without inserting an empirical calibration constant.  On each cell, write
$Z_c$ and $Z_e$ for the coarse and enriched whitened trial-to-test matrices.
The minimal-norm local Fortin map is computed from
\[
 Z_c^*P_K=Z_e^*,\qquad P_K=(Z_c^*)^\dagger Z_e^*.
\]
Thus $\|P_K\|_2$ is a directly computable local Fortin constant, while
$\|Z_e^*-Z_c^*P_K\|_2/\|Z_e^*\|_2$ tests the commuting equation itself.
Table~\ref{tab:fortin-audit} reports the native-grid calculation.  The defects
are at roundoff, so the local Fortin operators exist.  Their norms, however,
show that test degree five is not sharp enough on the heterogeneous field.
Raising only the test degree from five to seven reduces the worst constant
from $28.3$ to $3.53$ on SPE10--70, without changing the trial space.

\begin{table}[t]
\centering
\caption{Local Fortin audit at $40\times40$.  The enriched test degree is
eight.  Max and p95 are cellwise norms of the minimal commuting map.}
\label{tab:fortin-audit}
\begin{tabular}{lrrrr}
\toprule
field & coarse degree & max $C_F$ & p95 $C_F$ & commuting defect\\
\midrule
random--23 & 5 & 8.56 & 7.35 & $5.5\,10^{-14}$\\
random--23 & 7 & 2.53 & 2.32 & $3.3\,10^{-14}$\\
SPE10--70  & 5 & 28.30 & 7.18 & $1.3\,10^{-12}$\\
SPE10--70  & 7 & 3.53 & 2.05 & $3.6\,10^{-13}$\\
\bottomrule
\end{tabular}
\end{table}

This is a theory-driven discretization decision rather than a refinement
heuristic: the trial approximation is held fixed and the Fortin constant
selects the test resolution required for reliability.  It does not yet close
the continuum certificate.  The degree-eight enriched residual is only
$1.5\%$--$4.5\%$ larger than the degree-five residual in the audited cases,
but a small observed increment is not a bound on the uncomputed test-space
tail.  Until data oscillation or a saturation constant controls that tail,
$\eta_{\rm disc}$ in Theorem~\ref{thm:discrete-continuum-enclosure} remains
unavailable and the correct decision is still to abstain.

We also tested the tempting two-level alternative in
Proposition~\ref{prop:two-level-saturation}.  Degrees five, seven, and eight
were solved with the same trial space and audited against an independently
computed $160\times160$ conservative finite-volume reference.  Table~
\ref{tab:saturation-audit} shows that no uniform $q<1$ exists for this locked
family.  Test enrichment improves the variational geometry but need not
decrease physical $L^2$ error, so an observed hierarchy cannot replace a
reliability proof.

\begin{table}[t]
\centering
\caption{Falsification of an assumed physical-error saturation factor.  An
entry counts cases in which the richer-test solution has smaller audited
$L^2$ error; each field contains six fixed inflows.}
\label{tab:saturation-audit}
\begin{tabular}{lrrrr}
\toprule
field & improved 5 to 7 & improved 7 to 8 & range $q_{5,7}$ & range $q_{7,8}$\\
\midrule
random--23 & 0/6 & 0/6 & 1.004--1.021 & 1.001--1.011\\
SPE10--70  & 5/6 & 2/6 & .609--1.003 & .952--1.041\\
\bottomrule
\end{tabular}
\end{table}

The conclusion is narrower and stronger than selecting a favorable test
degree: local Fortin control is now computable, but the physical
discretization term must be supplied by a saturation-free transport
majorant---for example an equilibrated graph-residual reconstruction with a
proved trace bound.  That construction, rather than further degree tuning, is
the remaining continuum-certification problem.

Theorem~\ref{thm:graph-residual-majorant} provides that construction at the
functional level.  We first validate it in one dimension, where the trace
lifting is explicit, before introducing the additional approximation error of
a two-dimensional $H(\operatorname{div})$ lifting.  A degree-120 graph-Riesz
solve supplies an independent numerical value of the dual norm.  The lower
column in Table~\ref{tab:graph-majorant-validation} is the Galerkin Riesz norm;
the upper column is the saturation-free majorant in
Eq.~\eqref{eq:graph-residual-majorant}.  Every enclosure is valid and the
upper effectivity is $1.0112$.

\begin{table}[t]
\centering
\caption{Controlled one-cell validation of the saturation-free
graph-residual majorant.  The independently resolved dual norm is
$1.1603090054$.}
\label{tab:graph-majorant-validation}
\begin{tabular}{rrrr}
\toprule
test degree & Galerkin lower bound & graph majorant & effectivity\\
\midrule
1 & 1.1356990 & 1.1733137 & 1.011208\\
2 & 1.1601894 & 1.1733137 & 1.011208\\
4 & 1.1603074 & 1.1733137 & 1.011208\\
8 & 1.1603090 & 1.1733137 & 1.011208\\
\bottomrule
\end{tabular}
\end{table}

The same test on a two-dimensional tensor cell, with an affine
$H(\operatorname{div})$-compatible transport lifting and an independently
resolved degree-18 graph-Riesz reference, gives upper effectivity $1.01254$;
degrees one through eight all remain enclosed.  This confirms that the
majorant is not a one-dimensional integration artifact.  It is still a
controlled cell test: the natural heterogeneous experiment must compute the
lifting from its assembled face residuals rather than from a prescribed
polynomial lifting.

That distinction is material on the natural problem.  We attempted to lift
the four assembled degree-one face traces by a single tensor polynomial of
degree five on every cell.  Exact edge-moment compatibility failed on all
$1600$ cells.  Across the six inflows, median relative defects range from
$6.3\,10^{-4}$ to $.0127$ on random--23 and from $.0019$ to $.0192$ on
SPE10--70; the corresponding worst defects range from $.408$ to $.490$ and
from $.490$ to $.541$.  Increasing the polynomial least-squares accuracy does
not remove the obstruction: independently represented edge traces need not be
the boundary trace of one globally polynomial scalar lifting.

This negative result determines the admissible construction.  A transport
graph space permits piecewise characteristic liftings whose discontinuity
interfaces are tangent to $\boldsymbol\beta$; such jumps do not create a
distributional $\boldsymbol\beta\cdot\nabla$ term.  Equivalently, one may
compute the minimum-norm graph-Riesz lifting directly.  Either route must
carry a characteristic-integration or local-solve error bound.  Substituting
an incompatible polynomial fit would give a smaller number but would destroy
the guarantee.

A first characteristic implementation traces every quadrature point in all
$1600$ cells to valid inflow and outflow faces.  Unsplit spatial quadrature
initially suggested a mere integration problem.  Flow-coordinate quadrature
removes that ambiguity: it integrates the cell areas to within
$1.6\,10^{-14}$, but on the random--23 Gaussian case the majorant still grows
from $.5872$ to $.6690$, $.7188$, and $.7543$ at orders 6, 10, 14, and 18.
The divergence is concentrated on arbitrarily short characteristics near
corners whose incident edge polynomials have incompatible limiting values.

This is a structural discretization failure, not a quadrature defect.  The
prototype uses independently labelled flux-weighted $L^2$ edge coefficients;
they need not belong to the trace quotient of the continuous transport graph
space and therefore need not possess a finite compatible lifting.  A rigorous
continuum certificate requires either a conforming transport-trace space or
the proper quotient trace norm.  Characteristic tuning cannot repair an
inadmissible trace space.

We implemented the sufficient conforming repair.  Every degree-one edge trace
is now generated from shared vertex values, prescribed inflow values use the
same nodal interpolation, and the minimum-residual problem is re-solved in
that trace subspace.  Flow-coordinate quadrature then becomes stable.
Table~\ref{tab:conforming-trace-majorant} reports deliberately different
profiles and coefficient fields.  No characteristic fails, and the majorant
lies only $.007\%$--$.105\%$ above the independently enriched residual.

\begin{table}[t]
\centering
\caption{Natural 2D graph majorant after conforming trace repair.  Change is
the absolute order-10 to order-14 quadrature change; area defect audits the
flow-coordinate Jacobian.}
\label{tab:conforming-trace-majorant}
\begin{tabular}{lrrrr}
\toprule
field/profile & majorant & effectivity & change & area defect\\
\midrule
random--23/Gaussian & .1825275 & 1.000070 & $2.5\,10^{-7}$ & $1.6\,10^{-14}$\\
random--23/oscillatory & .3355416 & 1.000197 & $7.1\,10^{-13}$ & $1.6\,10^{-14}$\\
SPE10--70/Gaussian & .8924271 & 1.000141 & $1.1\,10^{-6}$ & $8.1\,10^{-6}$\\
SPE10--70/compact & .5634784 & 1.001052 & $5.4\,10^{-8}$ & $8.1\,10^{-6}$\\
SPE10--70/narrow front & 2.5944850 & 1.000179 & $3.4\,10^{-6}$ & $8.1\,10^{-6}$\\
\bottomrule
\end{tabular}
\end{table}

This closes the structural lifting failure and supplies an operational
saturation-free graph-energy majorant on the natural grid.  We next remove
the remaining characteristic-integration uncertainty.  Since the repaired
edge data share their vertex values, each cell admits a bilinear lifting.
The resulting majorant integrand is polynomial, so the selected
positive-weight Gauss rule is exact in exact arithmetic.  Only
floating-point evaluation and summation remain.  Under the standard IEEE
model, with unit roundoff $\epsilon$ and a conservative explicit operation
count $n$, the computed nonnegative sum is enclosed using
$\gamma_n=n\epsilon/(1-n\epsilon)$ and outward propagation through the
square root.  This encloses evaluation for the fixed assembled coefficients;
the stored floating-point coefficients define the computed approximant.
Crucially, no estimate of their distance from the exact least-squares
minimizer is required.  The reconstructed residual certifies the realized
approximant itself, so early termination of the algebraic solver appears
directly as a larger residual rather than through a condition-number-amplified
coefficient-error estimate.  Only uncertainty in forming the stored
coefficients, as opposed to algebraic nonconvergence, would require an
additional enclosure.

\begin{table}[t]
\centering
\caption{Polynomial-exact bilinear lifting after conforming trace repair.
The majorant is the order-8 value, effectivity uses its IEEE upper endpoint,
and $\delta_{\rm fp}$ is its conservative rounding radius.  Orders 8--12
agree to the displayed precision.}
\label{tab:bilinear-interval-majorant}
\begin{tabular}{lrrr}
\toprule
field/profile & majorant & effectivity & $\delta_{\rm fp}$\\
\midrule
random--23/Gaussian & .19865785 & 1.08845 & $2.3\,10^{-8}$\\
random--23/oscillatory & .40143134 & 1.19660 & $2.3\,10^{-8}$\\
SPE10--70/Gaussian & .98560758 & 1.10457 & $2.3\,10^{-8}$\\
SPE10--70/compact & .77047425 & 1.36879 & $2.3\,10^{-8}$\\
SPE10--70/narrow front & 2.97469383 & 1.14675 & $6.8\,10^{-8}$\\
\bottomrule
\end{tabular}
\end{table}

Table~\ref{tab:bilinear-interval-majorant} makes the
sharpness--auditability tradeoff explicit.  The characteristic lifting in
Table~\ref{tab:conforming-trace-majorant} is sharper; the bilinear lifting
removes quadrature truncation and supplies the formal upper endpoint.  It is
already a physical error certificate in the induced energy norm.  For any
other stated physical norm, Corollary~\ref{cor:physical-norm-conversion}
divides the endpoint by an independently proved continuous $\beta_X$.

The fixed upper value is informative: the explicit affine trace lifting is
reliable but is not the minimum-norm representation of the unresolved
functional.  Quotienting null representations, or equivalently minimizing
$\|a\|^2+\|b\|^2$ subject to the trace constraint, can only sharpen it.  Thus
the theorem closes the saturation gap; the conforming trace construction
makes its natural two-dimensional graph-energy specialization operational.

\paragraph{Controlled two-dimensional extension.}
We finally transfer the same ingredients to a heterogeneous square-domain
Helmholtz problem on a $31\times31$ ambient grid. A nested quadtree supplies h
actions, smooth cell bubbles supply p, and the c-family contains windowed local
waves, partition-of-unity global waves, and source- and inclusion-centered
fundamental solutions. All locations derive from the prescribed source and
coefficient field. The first local-bubble family failed because every
wave vanished on coarse-cell boundaries and could not modify transmitted
phase; adding functions that cross the coarse skeleton repaired that defect.

The remaining issue is test geometry. With 20 near-resonant modes, hpc error
is $.815$. Increasing the transformed subspace gives $.659$, $.496$, and
$.401$ for 40, 80, and 160 modes, approaching the ideal-norm value $.391$.
At the common 105-dimensional budget, the 160-mode results are $.589$ for h,
$.653$ for hp, $.484$ for c, and $.401$ for hpc. The corresponding hpc space
retains the initial 16 cells, while hp uses 88; hpc selects global and local
wave directions together with 14 polynomial bubbles and no h-split.

\begin{table}[t]
\centering
\caption{Controlled 2D Helmholtz sensitivity to the number of explicitly
treated near-resonant modes. All hpc rows use the same 105-dimensional budget.}
\begin{tabular}{lrr}
\toprule
test geometry & hpc relative $L^2$ error & terminal cells\\
\midrule
20-mode transform & .815 & 22\\
40-mode transform & .659 & 19\\
80-mode transform & .496 & 16\\
160-mode transform & .401 & 16\\
ideal transform & .391 & 25\\
ideal hp control & .550 & 58\\
\bottomrule
\end{tabular}
\label{tab:local-2d-helmholtz}
\end{table}

\begin{figure}[t]
\centering
\includegraphics[width=.92\textwidth]{figures/local_2d_helmholtz_mesh_convergence.pdf}
\caption{Controlled 2D local optimal-test experiment using the 160-mode
transform. The enriched policy represents the propagated field through
polynomial and oscillatory functions on the initial mesh, whereas hp expends
its budget on 88 cells. This is evidence of representation compression, not
yet a scalable DPG result: the explicit resonant subspace is too large and the
ambient test computation is global.}
\label{fig:local-2d-helmholtz}
\end{figure}

The 2D test therefore confirms the mechanism but also sharpens the remaining
gap. A small global deflation adequate in one dimension is not adequate in two;
the effective fragile subspace grows. Patchwise broken tests, local spectral
compression, or matrix-free recycling must approximate this geometry without
forming 160 global modes.

We tested the most direct localization: overlapping additive-Schwarz test
solves on $6\times6$ through $16\times16$ patches. One-level patch norms fail,
with hpc errors between $.919$ and $.971$ even when their internal residual is
small, because independent Dirichlet patches do not see global cavity modes.
A two-level construction that is exact on 20 global resonant modes improves
the $16\times16$-patch error to $.594$, but remains well above $.401$; raising
the coarse dimension to 40 gives $.628$, showing that the greedy path need not
improve monotonically under an altered approximate norm. Thus naive patch
localization is not the scalable solution. The missing result is a
spectrally-equivalent broken test norm, likely requiring impedance transmission
conditions or an energy-minimizing coarse space rather than independent
Dirichlet patch inverses.

Replacing artificial Dirichlet boundaries by discrete impedance transmission
conditions confirms this diagnosis. With 20 global coarse modes, the physical
Robin scale gives error $.482$ at 80 actions, compared with $.594$ for the
Dirichlet two-level norm; a sweep over relative impedance parameters
$.25,.5,1,2,4$ selects one. Increasing the coarse correction to 40 modes gives
$.469$. This remains above global-160 error $.401$, but stores approximately
$8.8\,10^4$ factor and coarse entries rather than $1.54\,10^5$ entries for the
global eigenvectors. Its present sequential implementation takes 28 seconds,
so it is a stability/accuracy improvement rather than a speed result. Patch
solves are independent and can be parallelized, while the remaining gap points
to an operator-compatible coarse space rather than further tuning of the Robin
coefficient. Two controlled falsification tests make that qualification
important. A coercive energy-minimizing construction produces errors $.913$
and $.947$ (without and with incident-phase constraints), despite driving its
own transformed residual as low as $.084$. Rank-20, 40, and 80 Arnoldi spaces
recycled from a nearby right-hand side give errors $.848$, $.731$, and $.833$.
Thus neither low surrogate energy nor one previous Krylov trajectory ensures
coverage of the near-resonant directions governing a new load. Together with
the positive Robin--spectral result, these controls isolate the open
requirement: cross-load spectral equivalence on the fragile operator subspace.

We tested that requirement prospectively on four unseen loads.  Twelve training
loads generated resolvent snapshots; exact snapshot redundancies were removed,
and the remaining directions were ordered by solution energy per unit
operator-image energy.  Table~\ref{tab:crossload-coarse} reports the frozen
audit.  The rank-20 learned space is substantially better than either
single-load construction, but does not match the spectral control.  Protecting
30 or 35 resonant modes and filling the remaining 40-dimensional budget with
conditional snapshot innovations also fails to improve upon 40 spectral modes.
The result is informative rather than positive: average amplification on the
training loads is not a substitute for uniform coverage of the fragile
subspace.

\begin{table}[t]
\centering
\caption{Locked four-load audit of coarse test geometries.  Errors are relative
$L^2$ errors after the same 80-action hpc budget.  Training resolvents and audit
loads are disjoint.}
\label{tab:crossload-coarse}
\begin{tabular}{lrrrr}
\toprule
coarse geometry & dimension & median error & worst error & stored entries\\
\midrule
cross-load resolvent & 20 & .507 & .558 & 68,763\\
30 spectral + 10 innovations & 40 & .489 & .534 & 89,183\\
35 spectral + 5 innovations & 40 & .497 & .504 & 89,183\\
Robin--spectral control & 40 & .472 & .488 & 87,583\\
global spectral control & 160 & .400 & .415 & 153,760\\
\bottomrule
\end{tabular}
\end{table}

\paragraph{Principal numerical finding.}
The clearest result is heterogeneous Darcy flow.  Across eleven prescribed
SPE10 layer--load cases, pc improves upon p at equal dimension in every case,
with a maximum relative energy-error reduction of $41.6\%$.  For Model 1,
23 pc coordinates attain error $.514$, while the first h-adaptive P1 and P2
spaces below that threshold have intrinsic dimensions 79 and 156.  The staged
hc calculation reaches error $.432$ with 52 coordinates: the retained
coefficient-adapted functions represent global channel structure, while h
refinement acts on the remaining localized residual.  The stochastic Darcy,
high-contrast, Allen--Cahn, and transport examples examine how far this
mechanism extends; they provide supporting rather than principal evidence.

\endsecondarynumericsD

\section{Numerical evidence: automatic transfer and safe fallback}
\label{sec:main-numerics}
The principal experiments test the complete width-to-decision chain rather
than a single candidate family.  In every case an incumbent space is frozen
before automatic transfer modes are generated.  Construction uses the PDE
operator and a declared training load or trajectory family; the marking and
audit families are disjoint.  Comparisons are dimension matched, and the safe
variant returns the incumbent whenever the prospective transfer test fails.

Table~\ref{tab:auto-transfer-crossclass} gives the main result.  For layered
Helmholtz, twelve randomized quotient--Green modes replace twelve additional
expert-designed directions and approximately halve unseen-source error over
five sketches.  For coercive Darcy, energy-transfer modes generated from a
fixed load quadrature outperform h/p/compositional selection over ten unseen
permeability fields.  Maxwell supplies the necessary counterexample: automatic
trajectory modes help on the coarser compatible discretization, but are worse
on the finer level, where the safe rule rejects all five proposals and returns
the incumbent exactly.

\begin{table}[H]
\centering
\caption{Matched-dimension automatic-transfer audit.  Errors are means with
sample standard deviations where random repetitions are available.  The safe
column reports how often the automatic proposal is deployed.}
\label{tab:auto-transfer-crossclass}
\begin{tabular}{lrrrrr}
\toprule
problem & runs & incumbent & automatic & safe & accepted\\
\midrule
layered Helmholtz & 5 & $.8646$ & $.4313\pm.0388$ & $.4313\pm.0388$ & 5/5\\
coercive Darcy & 10 & $.4764\pm.0434$ & $.3285\pm.0351$ & $.3285\pm.0351$ & 10/10\\
Maxwell, coarse level & 5 & $.3145\pm.0709$ & $.2620\pm.0957$ & $.2888\pm.0767$ & 2/5\\
Maxwell, fine level & 5 & $.1478\pm.0347$ & $.1915\pm.0485$ & $.1478\pm.0347$ & 0/5\\
\bottomrule
\end{tabular}
\end{table}

The same selection mechanism was frozen and tested in three dimensions.  The
heterogeneous Helmholtz discretization has $20^3=8000$ unknowns.  Thirty-two
training loads construct operator-response modes, 16 independent marking
loads select among matched spaces, and 24 untouched loads per seed audit the
physical solution error.  Selection and coefficient fitting use independent
rank-128 and rank-256 adjoint Petrov sketches, respectively, formed by solving
$B^*z_j=s_j$ for isotropic probes $s_j$; no audit solution enters either step.
Table~\ref{tab:helmholtz-3d-adjoint} reports averages over three locked seeds.

\begin{table}[H]
\centering
\caption{Locked 3D heterogeneous Helmholtz audit.  The spaces
have equal dimension.  Errors are mean relative physical solution errors over
72 untouched right-hand sides.}
\label{tab:helmholtz-3d-adjoint}
\begin{tabular}{rrrr}
\toprule
dimension & spectral-polynomial & operator-adapted & reduction (\%)\\
\midrule
8  & 1.0060 & .7535 & 25.1\\
12 & 1.0085 & .6664 & 33.9\\
16 & 1.0135 & .6096 & 39.8\\
24 & 1.0212 & .5379 & 47.3\\
\bottomrule
\end{tabular}
\end{table}

The independent rule selected the operator-adapted space or a mixed space
with the same numerical span in all 12 seed--dimension cases.  Worst test
error fell by $12.8$--$33.8\%$.  A single block adjoint solve constructs both
sketches in 1.26 seconds and 46.9 MiB; an operator-keyed cache reduces reuse
time to .009 seconds without changing either sketch.  The reduced
online solve required approximately $.35$ ms per right-hand side.  This tests
transfer beyond the two-dimensional development.  To separate dimension
efficiency from a weak baseline, we next compare with native MFEM
p-refinement on tetrahedral meshes.  All approximations are prolonged into a
common p4 or p5 reference space and audited in its mass norm.  For each
wavenumber, Table~\ref{tab:helmholtz-3d-mfem-target} reports the first tested
configuration below the declared mean-error target.  Since p is discrete,
MFEM sometimes overshoots the target; the attained errors are shown.

\begin{table}[H]
\centering
\caption{Matched-target native MFEM comparison in three dimensions.  The ratio
is MFEM dimension divided by SR dimension.  Online times are per right-hand
side.  Memory is the deployed sparse operator and factor for MFEM, and the
basis, fitting sketch, and reduced operator for SR.  Offline time excludes
common mesh/operator assembly and all audit work; it includes the selected
MFEM factorization or the SR reference factorization, training solves,
compression, independent sketches, and selection.}
\label{tab:helmholtz-3d-mfem-target}
\small
\begin{tabular}{rrrrrrrrr}
\toprule
$k$ & target & method & dim. & ratio & error & off. (s) & on. (ms) & MiB\\
\midrule
8  & .15 & MFEM-p2 & 1331 &      & .1299 & .018 & .156 & 4.09\\
   &     & SR      &   24 & 55.5 & .1478 & 8.36 & .237 & 33.43\\
12 & .25 & MFEM-p3 & 4913 &      & .1049 & .167 & 1.044 & 37.66\\
   &     & SR      &   24 & 204.7 & .2354 & 8.39 & .231 & 33.43\\
17 & .20 & MFEM-p4 & 6859 &      & .1386 & .174 & 1.640 & 54.11\\
   &     & SR      &   96 & 71.4 & .1930 & 6.57\,(1.30) & .099 & 47.46\\
\bottomrule
\end{tabular}
\end{table}

The result identifies the intended regime rather than universal superiority.
At $k=8$, native p2 is more accurate, faster, and smaller.  At $k=12$ and 17,
the compact response space is approximately $4.5$ and $1.4$ times faster
online and uses $11.2\%$ and $12.3\%$ less deployment memory, but has larger
offline cost.  For $k=17$, the first number is cold construction and the
parenthesized number reuses the operator-keyed adjoint cache.  The measured
online savings amortize the cold and cached differences after approximately
4,200 and 730 right-hand sides, respectively.

We also performed a native 3D adaptive control on a
manufactured heterogeneous Helmholtz family.  The exact solution is used only
for the final physical-error audit.  MFEM's recovered-flux indicator marks
elements, after which native nonconforming h-refinement and variable element
orders perform h-only, p-only, or alternating h--p updates.  Each MFEM path is
adapted separately to its held-out right-hand side; SR must instead use one
shared response space trained independently of all three audit seeds.
Table~\ref{tab:helmholtz-3d-adaptive-hp} reports the first state below relative
$L^2$ error $.15$, or the terminal state when the target is missed.

\begin{table}[H]
\centering
\caption{Native 3D residual-marked MFEM refinement versus a
shared SR space at target error $.15$ (three locked seeds).  ``Pass'' counts
audits reaching the target.  The alternating h--p row follows MFEM's standard
marked p/h iteration pattern; it is a control, not a claim to represent every
specialized hp decision rule.  For SR, ``cold'' is the first offline number and
includes assembly of the previously unseen discrete PDE operator, generation
and solution of the training loads, response compression, adjoint-sketch
construction, and cache writes, with no reusable numerical state.  The two
parenthesized numbers cache only the fixed operator and then the complete
operator/response/adjoint package, reducing offline cost from $1.013$ s to
$.365$ s ($2.8\times$ faster) and $.008$ s (approximately $127\times$ faster),
respectively.}
\label{tab:helmholtz-3d-adaptive-hp}
\begin{tabular}{lrrrrrr}
\toprule
method & pass & mean dim. & mean error & off. (s) & on. (ms) & MiB\\
\midrule
adaptive h & 3/3 & 878 & .0967 & 1.101 & .1080 & 2.15\\
adaptive p & 3/3 & 1109 & .1180 & 1.279 & .2105 & 6.17\\
adaptive h--p & 3/3 & 5072 & .1009 & 2.225 & 2.6090 & 55.96\\
SR shared space & 3/3 & 48 & .1092 & 1.013\,(.365/.008) & .0158 & 9.61\\
\bottomrule
\end{tabular}
\end{table}

For MFEM, offline time is the complete residual-marked adaptive path for one
right-hand side and online time is its terminal direct solve.  For SR, offline
time constructs one space and cached adjoint geometry shared by the entire
family; online time is the reduced solve for one new right-hand side.  The SR
offline entries report a genuinely cold construction, construction of a new
response space with the fixed PDE operator cached, and full reuse of the
operator, response basis, and adjoint geometry, respectively.  These
columns therefore expose, rather than conceal, the different amortization
models.

This control strengthens the dimension-efficiency claim: the shared SR space
meets the target with about $18$ times fewer coordinates than adaptive h, the
best polynomial control by dimension, and its reduced online solve is about
$6.8$ times faster.  It does not imply universal computational superiority.
For a single right-hand side, adaptive h stores less data and has lower
construction cost; SR amortizes its response basis and adjoint geometry across
a family.  Moreover, alternating marked h and p updates over-refine this smooth,
localized family.  A comparison with a specialized hp smoothness classifier
and mature parallel adaptive-hp implementation remains future work.
The reported SR construction uses vectorized nodal evaluation followed by the
consistent mass matrix.  Relative to repeated elementwise Python quadrature
callbacks, vectorized load formation first reduced the high-frequency offline
time from 45.69 to 8.66 seconds.  Batched independent sketches and
operator-keyed caching then reduce it to 6.57 seconds cold and 1.30 seconds
when the fixed operator geometry is reused.  The three-seed physical error is
$.1930\pm.0220$, consistent with the earlier result.  These reductions are
implementation improvements, not changed selection rules or relaxed audits.

The Maxwell physical diagnostics support the same interpretation.  On the
coarse level, automatic transfer reduces mean Poynting-flux error from $.3513$
to $.2301$ and sensor-trace error from $.2842$ to $.1338$.  On the fine level,
the incumbent already gives lower energy error; rejection therefore preserves
its result.  A redundant-proposal control has zero quotient--Riesz lower bound,
negative marking gain, and also triggers exact fallback.  These outcomes test
selection rather than universal superiority of one representation.

Across the 25 prospective decisions in Table~\ref{tab:auto-transfer-crossclass},
the marking gate predicts the lower-error audit space in 23 cases.  Every one
of the 17 accepted transfers improves independent audit error, so there are no
false acceptances; the two errors are conservative rejections on coarse
Maxwell.  This is evidence that the functional gate is predictive rather than
post-hoc, but the sample is still too small to estimate a universal error rate.

The remaining examples isolate individual components of the framework.  The
transport study tests compositional discovery and construction error; the
nonnormal study tests sharp complementary certification; and the sparse
two-dimensional study tests matrix-free evaluation.  Broader finite-element,
DPG, scaling, and falsification results appear in
the Supplementary Material.

The bulk theorem changes only experiments that repeatedly choose among a
candidate pool.  To avoid relabeling unrelated bounds, Table~\ref{tab:bulk-applicability}
states the treatment of every empirical class in the paper.  The two locked
iterative suites are rerun under bulk selection with cost and memory
instrumentation.  One-shot construction tests, error-bound audits, and sparse
linear-algebra scaling retain their original protocols because they have no
successive candidate-pool decision.
\begin{table}[H]
\centering
\caption{Applicability of the bulk-coverage certificate to the empirical
evidence.  ``Unchanged'' means mathematically inapplicable, not omitted.}
\label{tab:bulk-applicability}
\begin{tabular}{p{.29\linewidth}p{.18\linewidth}p{.43\linewidth}}
\toprule
empirical class & status & reason and reported quantities\\
\midrule
iterative Helmholtz and 2D Darcy candidate selection & rerun &
$\alpha_m$, $\theta_m$, contraction, batch size, time, memory, outer cycles\\
one-shot compositional discovery and prospective map acceptance & unchanged &
constructs or audits one block; there is no successive pool to cover\\
nonnormal eigenvalue and complementary-Schur error bounds & unchanged &
certifies a fixed approximation rather than selecting refinements\\
sparse bordered-inertia and matrix-free scaling & unchanged &
measures implementation of the fixed-space certificate\\
historical PDE scope and falsification studies in the appendix & scalar ablation &
retained to test candidate richness and failure modes, but not presented as
results of the final bulk-certified algorithm\\
\bottomrule
\end{tabular}
\end{table}

\subsection{Variational discovery and certification of transported innovations}
We begin with the proposal stage.  For an advection problem, numerical characteristics are computed solely from
the supplied velocity field.  Profiles composed with that map form the
candidate block.  To test the new calculus rather than insert the known
characteristic amplitude, we hide that amplitude from the optimizer and
maximize $G(\theta)$ on five locked pairs of source locations.  The assembled
operator remains available; the hidden amplitude and held-out source
locations are used only for audit.

Table~\ref{tab:main-composition-discovery} shows that quotient-gain discovery
recovers the true amplitude $.55$ with mean absolute error $.0169$, versus
$.0664$ for an otherwise matched objective that omits projection against the
current operator image.  On held-out sources its residual ratio $.5470$ is
close to the operator-reference value $.5460$ and below the unprojected
control's $.5507$; straight coordinates give $.7585$.  The analytic derivative
in Eq.~\eqref{eq:compositional-gain-gradient} agrees with a centered finite
difference to $1.9\,10^{-8}$ relative error.  Thus quotient geometry
materially improves recovery of the transferable structural parameter, while
the modest residual difference limits the predictive-performance claim.

\begin{table}[H]
\centering
\caption{Compositional discovery over five locked training-source pairs and
fifteen held-out evaluations.  The true flow amplitude $.55$ is hidden during
optimization.  Parentheses give standard deviations.}
\label{tab:main-composition-discovery}
\begin{tabular}{lrrr}
\toprule
method & selected amplitude & amplitude MAE & held-out residual ratio\\
\midrule
quotient gain & $.5371\;(.0153)$ & $.0169$ & $.5470\;(.0952)$\\
unprojected control & $.5420\;(.0799)$ & $.0664$ & $.5507\;(.0951)$\\
operator reference & $.5500$ & $0$ & $.5460\;(.0975)$\\
straight coordinate & $0$ & $.5500$ & $.7585\;(.0470)$\\
\bottomrule
\end{tabular}
\end{table}

\secondarynumericsF
We then enlarge the search to three continuous parameters---flow amplitude,
phase, and transverse scale---and the four discrete depths $1,\ldots,4$.
Each depth is optimized separately on a fixed rank-16 quotient and the depths
are compared by exact gain.  Across three locked source splits,
Table~\ref{tab:main-multivariate-discovery} shows that quotient discovery
again gives the best learned held-out ratio, $.5824$, compared with $.5871$
for unprojected discovery and $.8961$ for a matched polynomial block.  It
remains close to the operator reference, $.5778$.

The parameter audit is deliberately more cautious.  Quotient discovery
selects depths $1,1,2$ across the three splits, while transverse scale
sometimes reaches the search boundary.  Hence the useful functional range is
more identifiable than the individual compositional parameters.  This agrees
with quotient invariance but provides no evidence that extra depth is
uniformly advantageous.

\begin{table}[H]
\centering
\caption{Mixed discrete--continuous discovery.  Ratios are averaged over nine
held-out evaluations.  The depth column reports the three independently
selected depths.}
\label{tab:main-multivariate-discovery}
\begin{tabular}{lrr}
\toprule
method & selected depths & held-out residual ratio\\
\midrule
quotient discovery & $1,1,2$ & $.5824$\\
unprojected control & $4,1,1$ & $.5871$\\
operator reference & $1,1,1$ & $.5778$\\
straight coordinate & $1,1,1$ & $.7908$\\
polynomial control & --- & $.8961$\\
\bottomrule
\end{tabular}
\end{table}

To isolate whether depth can itself provide compression, we finally use a
controlled one-dimensional reaction--diffusion operator whose potential
contains an unknown recursively composed tent coordinate.  The solution has
the same structural depth together with additional oscillatory and localized
components that are absent from every candidate block.  The assembled
operator and load are supplied to the method; the generating depth and exact
solution are retained only for audit.  Six candidate directions are compared
at each depth $1,\ldots,6$ by quotient gain.

The correct depth is selected in all 15 combinations of five generating depths
and three mismatched loads.  Table~\ref{tab:main-recursive-depth} also gives a
rank-matched compression audit.  The median number of additional sine
directions required to match the six-function compositional error grows from
$6$ at depth one to $192$ at depth four and exceeds $192$ at depth five.
This is a controlled demonstration of depth-dependent representation
compression, not evidence that natural PDEs generally possess the required
recursive structure.

\begin{table}[H]
\centering
\caption{Controlled recursive-depth audit.  Every row summarizes three loads;
the selected block always contains six directions.}
\label{tab:main-recursive-depth}
\begin{tabular}{rrrr}
\toprule
true depth & selected depths & mean error ratio
& median p-directions to match\\
\midrule
1 & $1,1,1$ & $.3908$ & $6$\\
2 & $2,2,2$ & $.2784$ & $12$\\
3 & $3,3,3$ & $.2369$ & $48$\\
4 & $4,4,4$ & $.2053$ & $192$\\
5 & $5,5,5$ & $.1446$ & $>192$\\
\bottomrule
\end{tabular}
\end{table}

As a natural-coefficient check, we next use eight lognormal multiscale
permeability fields that are not generated recursively.  In one-dimensional
Darcy flow the operator supplies the harmonic coordinate
$\xi(x)\propto\int_0^x k(s)^{-1}\,ds$.  Dyadic approximations $\xi_d$ are
genuinely successive compositions: because each is monotone, the incremental
map $\Phi_d=\xi_d\circ\xi_{d-1}^{-1}$ gives
$\xi_d=\Phi_d\circ\cdots\circ\Phi_1$.  No solution enters this construction.

The smallest depth attaining 99\% of the best training gain is seven for
seven fields and eight for one.  On 24 held-out load--field pairs, six
functions composed with the selected coordinate attain mean energy-error
ratio $.308$, compared with $.609$ for six polynomial directions, $.878$ for
six residual-marked local hats, and $.295$ for the exact harmonic-coordinate
reference.  This confirms practical operator-coordinate reuse, but not
exponential compression: a depth-seven dyadic map stores roughly 256
resistance values.  A separate offline--online audit makes the tradeoff more
precise.  The ten-dimensional composed space uses, on average, 6.6 times
fewer trial coefficients than a retrospectively error-matched polynomial
control and 11.1 times fewer than a uniform-h control.  Its intrinsic stored
state is smaller by factors 12 and 35.  Coordinate search adds about $.0085$
seconds once; at 10,000 reused right-hand sides the measured total times are
$.0274$, $.0300$, and $.0408$ seconds.  The classical controls attain lower
held-out errors, so this establishes amortized compression rather than
accuracy superiority.  Detailed results appear in
Appendix~\ref{app:natural-darcy-depth}.

The sharper test is whether the residual can decide what should follow the
first coordinate block.  On eight additional locked fields, we repeat a
solve--estimate--enrich cycle with a conforming hierarchical finite element
control.  Its h-candidates are nested dyadic hats; its p-candidates are
element-supported Legendre bubbles on the dyadic hierarchy.  At every cycle,
the same operational Schur gain per retained direction ranks h, p, and
harmonic-coordinate c-candidates.  No held-out load enters this decision.
At the common dimension 51, the mixed policy attains held-out energy ratio
$.0864\pm.0144$, compared with $.1011\pm.0279$ for c alone,
$.2131\pm.0422$ for hierarchical hp, and $.2211\pm.0313$ for p alone.  It
beats hp on all eight fields and c alone on seven.  Its average composition is
12.6 c-, 6.8 h-, and 28.6 p-directions.  Thus the rule retains ordinary finite
element actions rather than replacing them.  Figure~\ref{fig:darcy-residual-policy}
shows the complementarity: operator coordinates remove coefficient-driven
structure, after which h and p resolve the remaining spatial and smooth error.
\begin{figure}[t]
\centering
\includegraphics[width=.96\linewidth]{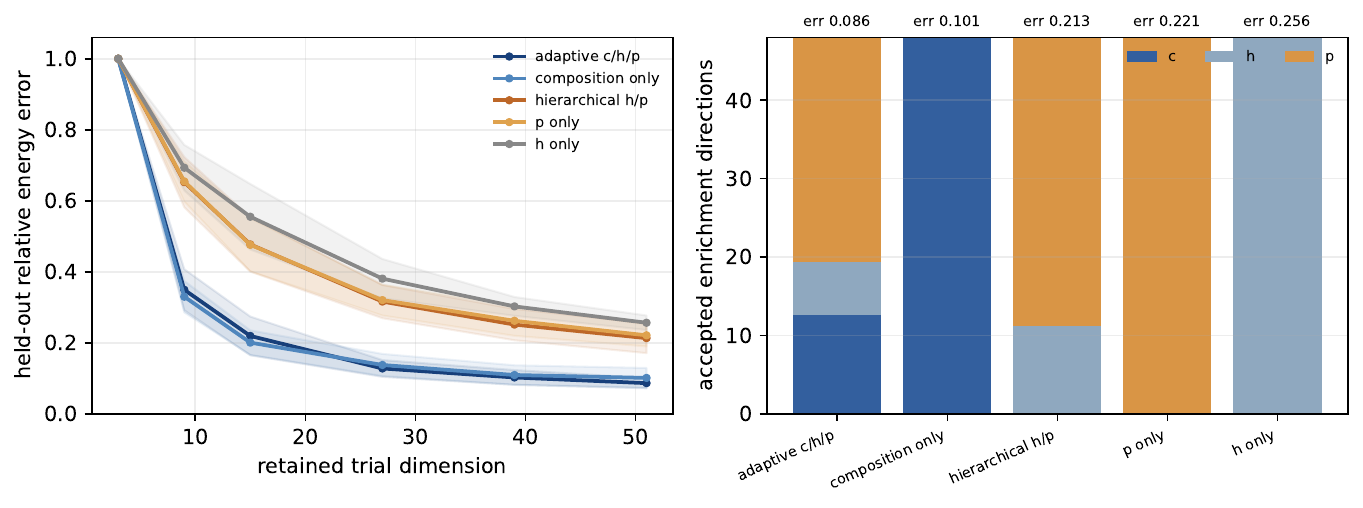}
\caption{Prospective residual selection against a conforming hierarchical hp
control on eight locked lognormal Darcy fields.  Left: mean held-out energy
error; shading is one sample standard deviation.  Right: mean composition of
the 48 accepted enrichment directions.  Every c/h/p decision uses only
pilot-load quotient--Schur gain.}
\label{fig:darcy-residual-policy}
\end{figure}

A stricter two-dimensional replication uses eight new lognormal fields, eight
space-filling pilot well pairs, eight disjoint audit pairs, and exactly 48
scalar enrichments after the same nine-dimensional coarse Q1 space.  The
c/h/p policy attains $.4711\pm.0426$, versus $.7492\pm.0413$ for hierarchical
hp, and wins all eight paired comparisons.  Pure c attains
$.4690\pm.0350$, statistically indistinguishable at this scale; c/h/p is
better on five fields.  Thus the unified policy retains the large
operator-coordinate advantage over hp, but this 2D experiment does not show
that mixing improves upon pure composition.  Its average 48-direction space
contains 31.6 c-, 2.1 h-, and 14.2 p-directions.  A pilot-only incumbent guard
still prefers the mixed path, so the small remaining difference is consistent
with cross-load selection rather than algebraic instability.  Full results
appear in Appendix~\ref{app:natural-darcy-depth}.

We also test the natural first remedy on a third, entirely new locked suite: a
variance-penalized lower-confidence Schur score across pilot loads.  It moves
the selected space toward composition but does not improve transfer.  The
result rules out a generic variance penalty as the missing ingredient and
shows that predictive mixing requires either a richer load model or an
independent task-level validation mechanism.

Proposition~\ref{prop:operator-covariance-gain} supplies that richer model.
On a fourth locked suite, an 81-point well quadrature represents the admissible
load covariance without PDE solves.  Across 24 new fields, covariance-ranked
c/h/p reduces held-out error from $.4963$ for pilot-mean mixing to $.4586$;
the paired improvement is $.0376$ with 95\% confidence interval
$[.0261,.0491]$.  Pure c attains $.4630$, statistically indistinguishable
from covariance c/h/p.  Operator covariance therefore closes the harmful
mixing gap and retains 9.5 p- and 2.5 h-directions on average, but does not yet
establish superiority over pure composition.

A locked structural-regime test supplies the missing architectural evidence.
With unchanged candidates and score, the selected mixture changes with the
coefficient operator: smooth fields use 46.4 of 48 c-directions and no h;
combined channel--inclusion fields use 31.8 c, 4.1 h, and 12.1 p.  Relative to
pure c, expected-gain selection improves channel error by $.0217$ (95\%
interval $[.0102,.0332]$) and combined-field error by $.0123$
($[.0038,.0208]$).  Local inclusions primarily activate cell-supported
p-bubbles rather than h-hats, correcting the simplistic expectation that
every localized coefficient feature calls for h-refinement.

The minimum-residual extension gives the corresponding non-coercive result.
Across 12 heterogeneous Helmholtz cases, covariance-selected polynomial and
wave directions attain mean held-out solution error $.3422$, versus $.4429$
for pure polynomials, $.6315$ for pure waves, and $.9512$ when covariance is
computed in the raw residual geometry.  The stable transformed geometry beats
both pure families in 10 and 12 cases, respectively, and raw covariance in all
12.  This confirms that the expected-gain principle is not restricted to
coercive energy spaces, while also showing that its solution relevance depends
on a stable trial--test norm.

The computational conclusions differ.  On a fixed channel--inclusion Darcy
coefficient, reduced covariance c/h/p solves are 10--17 times faster per new
right-hand side and amortize construction after roughly 2,100--4,000 solves.
For Helmholtz, the present dense 24-mode residual transform makes a reduced
solve 2.5--4.1 times slower than a sparse direct triangular solve on the tested
grids.  Thus the coercive formulation already has a repeated-solve use case;
the non-coercive formulation requires a matrix-free or localized test-space
map before it can claim computational speedup.
\endsecondarynumericsF

After discovery, the current operator image is projected out and the
innovation cutoff determines the retained rank, while the characteristic-map
error supplies $\varepsilon_C$.  The prospective bound is
\[
 \rho_{\rm cert}=\sqrt{1-\gamma}
 +\frac{\varepsilon_C}{\sqrt{A_C}}\sqrt{\gamma}.
\]
Table~\ref{tab:main-transport} reports the decisive cases.  With 241 map
labels, a rank-33 block has predicted residual ratio $.4806$, close to the
independently observed $.4786$.  The larger rank-56 block is rejected when
the map is coarse and accepted after the map is refined.  Incorrect
coordinates are rejected.  Hence the same theorem selects approximation rank
and the accuracy required of the operator coordinate.

\begin{table}[H]
\centering
\caption{Prospective transport certificate.  Acceptance requires
$\rho_{\rm cert}<1$; the observed ratio is evaluated only after the decision.}
\label{tab:main-transport}
\begin{tabular}{lrrrrrr}
\toprule
coordinate & labels & rank & $\gamma$ &
$\varepsilon_C/\sqrt{A_C}$ & $\rho_{\rm cert}$ & observed\\
\midrule
computed & 31  & 33 & .7710 & .1484 & .6089 & .4785\\
computed & 241 & 33 & .7710 & .0023 & .4806 & .4786\\
computed & 31  & 56 & .7789 & .9245 & 1.2861 & .4703\\
computed & 241 & 56 & .7789 & .0144 & .4830 & .4702\\
straight & 241 & 33 & .7710 & 13.6689 & 12.4804 & .5906\\
wrong flow & 241 & 33 & .7710 & 14.1400 & 12.8941 & .9435\\
\bottomrule
\end{tabular}
\end{table}

\subsection{Sharp certification for a nonnormal operator}
Having tested discovery, we hold the candidate construction fixed and examine
the sharpness of certification.  We consider a symmetrizable
convection--diffusion eigenproblem.  The
operator supplies an exponential gauge and a weighted inner product in which
the problem becomes self-adjoint.  The composed block uses gauged sine
functions; the control uses the same number of ordinary sine functions.  No
eigenvector enters either construction.  Complementary Schur elimination
then retains the unresolved spectral distribution rather than replacing it by
a single worst-case gap.

Across 128 matched tests, the composed space is more accurate in every case.
More importantly for the present paper, the resulting upper error bound is
sharp: its median effectivity is $1.000000$ and its 99th percentile is
$1.0026$ for the composed space; see Table~\ref{tab:main-spectrum}.  The result
separates three contributions: composition supplies approximation power, the
operator metric removes artificial nonnormality, and Schur elimination makes
the certificate sharp.

\begin{table}[H]
\centering
\caption{Complementary Schur audit over 128 matched nonnormal eigenproblems.
Effectivity is the certified upper bound divided by observed eigenvalue error.}
\label{tab:main-spectrum}
\begin{tabular}{lrrrr}
\toprule
family & matched wins & median error & median effectivity & 99th percentile\\
\midrule
ordinary & 0/128   & $.00507$ & $1.000001$ & $1.0166$\\
composed & 128/128 & $9.00\,10^{-7}$ & $1.000000$ & $1.0026$\\
\bottomrule
\end{tabular}
\end{table}

\subsection{Two-dimensional sparse scalability}
The preceding one-dimensional calculation exposes the certificate cleanly but
does not test its algebraic implementation at scale.  The final example therefore
moves to two dimensions while leaving the mathematical criterion unchanged. We
apply bordered inertia to a two-dimensional, five-point,
symmetrized convection--diffusion operator with variable potential.  The
method evaluates the complementary spectral gap using the original sparse
matrix and two vectors; it never constructs a dense orthogonal complement.
Nested dissection and one reused symbolic LDL analysis give the results in
Table~\ref{tab:main-sparse-2d}.

At 65,536 unknowns, 38 shifted factorizations require 2.39 seconds and 27.8 MB.
Relative to reverse Cuthill--McKee ordering, this is a $6.69\times$ time and
$4.68\times$ memory improvement.  The experiment does not claim a new sparse
direct solver.  It shows that the sharp complementary certificate is an
operator calculation that remains sparse in two dimensions, rather than a
dense proof device restricted to the 1D prototype.

\begin{table}[H]
\centering
\caption{Sparse bordered-inertia certificate for a two-dimensional operator.
Each run uses 38 numerical LDL factorizations with one symbolic analysis.}
\label{tab:main-sparse-2d}
\begin{tabular}{rrrrrrr}
\toprule
grid & $N$ & RCM time & ND time & speedup & RCM MB & ND MB\\
\midrule
$32^2$  & 1,024  & .0079 & .0063 & $1.25\times$ & .279 & .202\\
$64^2$  & 4,096  & .0716 & .0400 & $1.79\times$ & 2.12 & 1.09\\
$128^2$ & 16,384 & 1.039 & .292 & $3.56\times$ & 16.47 & 5.61\\
$192^2$ & 36,864 & 5.078 & 1.018 & $4.99\times$ & 55.05 & 15.30\\
$256^2$ & 65,536 & 15.994 & 2.390 & $6.69\times$ & 129.87 & 27.78\\
\bottomrule
\end{tabular}
\end{table}

As a separate check of Theorem~\ref{thm:bulk-coverage}, we reused the locked
polynomial--wave candidate pools from four heterogeneous Helmholtz cases.  At
each of six cycles, the joint Schur innovation was formed before selection and
the smallest greedy batch reaching $50\%$ of its gain was retained.  The
coverage and bulk fractions were not tuned by case.  Table~\ref{tab:bulk-audit}
shows that coverage remained positive but became markedly smaller in the
layered medium.  Thus the diagnostic distinguishes a weaker library from an
inadequate selection rule.  Across all 24 cycles, the predicted and observed
squared-residual ratios agreed to at most $7.8\,10^{-16}$; batch sizes ranged
from 7 to 35 (mean 17.7).  Iterating the measured factors as in
Corollary~\ref{cor:finite-run-complexity} gives six-cycle residual envelopes
$.1603$, $.1601$, $.2830$, and $.3215$ for the four locked cases; each equals
the observed product of contraction ratios to rounding.  The full-pool calculation is not free: mean
coverage-plus-selection time is 334 ms per cycle and the peak certificate
workspace is 18.5 MiB in this dense implementation.

\begin{table}[H]
\centering
\caption{Locked bulk-coverage audit.  Cost is joint coverage plus selection
time per cycle; memory is peak certificate workspace, excluding 4.02 MiB of
candidate-image storage shared by all rows.}
\label{tab:bulk-audit}
\begin{tabular}{lrrrrrr}
\toprule
medium & angle & min./mean $\alpha_m$ & batch & 6-cycle envelope & cost (ms) & peak MiB\\
\midrule
Gaussian & $-0.55$ & $.142/.497$ & 17.7 & $.1603$ & 329 & 18.5\\
Gaussian & $ 0.40$ & $.123/.489$ & 16.7 & $.1601$ & 320 & 18.5\\
layered  & $-0.55$ & $.062/.351$ & 18.8 & $.2830$ & 347 & 18.5\\
layered  & $ 0.40$ & $.061/.324$ & 17.5 & $.3215$ & 341 & 18.5\\
\bottomrule
\end{tabular}
\end{table}

Theorem~\ref{thm:mixed-near-oracle} adds a stronger question than contraction:
does the selected mixed sequence use its limited number of coordinates nearly
as well as the best admissible sequence?  We tested this on eight independent
Darcy coefficient fields using a deliberately enumerable prospective pool.
Four screening loads, disjoint from the target load, selected four h, four p,
and four operator-derived candidates subject to the fixed pair-correlation
gate $.92$.  For $k=3$, all $\binom{12}{3}=220$ competitors were refitted to
obtain the exact oracle, while all subsets through size $2k=6$ were enumerated
to obtain $A_{2k}$.  A declared $1\%$ bounded perturbation was then applied to
the gain rankings; its exact magnitude supplied the theorem's $\delta_m$.

\begin{table}[H]
\centering
\caption{Exact oracle audit of three-block mixed refinement over eight Darcy
fields.  Ratios are relative to the best three-block gain in the fixed
prospective pool.  Values are mean $\pm$ sample standard deviation, with the
observed range in parentheses.}
\label{tab:mixed-near-oracle-audit}
\begin{tabular}{lcc}
\toprule
quantity & result & interpretation\\
\midrule
$A_{2k}$ & $.0872\pm.0157$ $(.0669,.1169)$ & restricted joint innovation\\
$B_1$ & $1.0000$ & normalized one-block upper bound\\
$\gamma=A_{2k}/B_1$ & $.0872\pm.0157$ & certified interaction ratio\\
greedy/oracle gain & $.9916\pm.0208$ $(.9405,1.0000)$ & observed efficiency\\
certified/oracle gain & $.0704\pm.0154$ $(.0491,.0996)$ & theorem lower bound\\
accumulated perturbation penalty & $1.90\,10^{-5}\pm1.74\,10^{-5}$ & declared ranking error\\
bound satisfied & $8/8$ & independent coefficient fields\\
\bottomrule
\end{tabular}
\end{table}

The experiment validates the theorem but also identifies its present
conservatism.  Greedy selection captured $99.2\%$ of oracle gain on average,
whereas the restricted lower spectrum certified only $7.0\%$.  The guarantee
is therefore non-vacuous and uniformly correct, but not yet a sharp predictor
of realized efficiency.  The gap comes principally from minimizing a joint
Riesz eigenvalue over every six-block subset, rather than from the controlled
assembly penalty.  This distinction is useful: it points to localized or
path-dependent restricted spectra as the appropriate route to a sharper
certificate, without changing the selected algorithm or its incumbent
fallback.

The certificate must also justify its computational price.  A comparison at
unequal errors would confound approximation power with cost, so we first fix
the held-out relative-energy tolerance $.41$.  The hp control starts from a
conforming tensor $Q_2$ space on $4\times4$ cells and admits conforming h
($Q_1$ on $8\times8$ cells) and p ($Q_3$ on $4\times4$ cells) functions.  We
give this control the favorable retrospectively calibrated stopping index on
its otherwise pilot-selected path.  The SR implementation uses the locked
screened bulk rule: individual gains index a $48$-direction subpool and an
exact Schur calculation certifies that subspace.  Table~\ref{tab:bulk-hp-computation}
reports five-run medians.

\begin{table}[H]
\centering
\caption{Matched-accuracy computational audit on the locked $63\times63$
Darcy operator.  Both methods attain relative error approximately $.405$.
Break-even is measured against a sparse factorization and subsequent LU
backsolves.  Working memory includes cached candidates and operator images;
deployment memory retains only the online representation.}
\label{tab:bulk-hp-computation}
\resizebox{\textwidth}{!}{%
\begin{tabular}{lrrrrrrrr}
\toprule
method & dim. & error & cycles & offline (s) & online/RHS ($\mu$s) & break-even RHS & working MB & deploy MB\\
\midrule
conforming hp & 110 & $.4057$ & 61 & $.1120$ & $5.24$ & 1492 & $19.6$ & $3.42$\\
screened Schur--Riesz & 68 & $.4050$ & 6 & $.1438$ & $3.65$ & 1872 & $30.0$ & $2.09$\\
\bottomrule
\end{tabular}}
\end{table}

\begin{figure}[H]
\centering
\includegraphics[width=.92\textwidth]{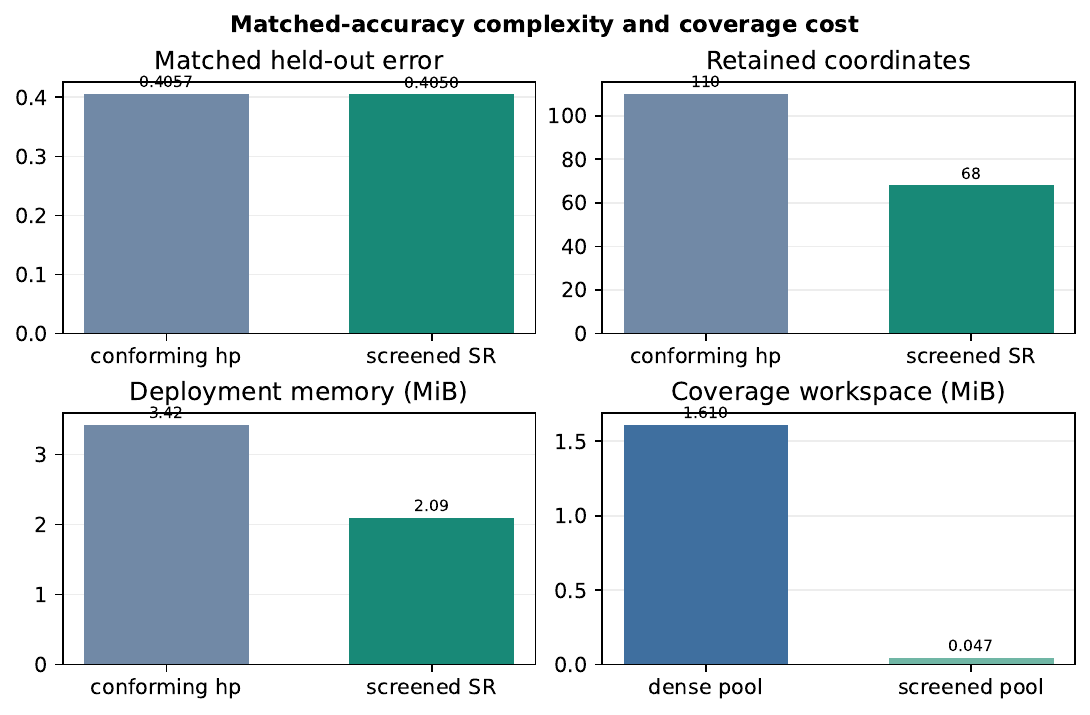}
\caption{The matched-accuracy comparison in
Table~\ref{tab:bulk-hp-computation}, together with the dense-to-screened
coverage ablation.  Accuracy is fixed before comparing representation,
memory, online work, and offline certification.}
\label{fig:bulk-hp-computation}
\end{figure}

At matched accuracy, SR uses $38.2\%$ fewer coordinates, $38.9\%$ less
deployment memory, and $30.1\%$ less measured online time per right-hand side.
Its present offline implementation is $28.4\%$ slower and uses $53.1\%$ more
working memory because it materializes the full mixed candidate family before
screening.  Relative to hp, this extra setup amortizes after approximately
21,000 right-hand sides; relative to the sparse full solve, the separate
break-even counts are 1,492 and 1,872.  These timings are implementation
evidence, whereas the representation counts give the cleaner complexity
comparison.

The coverage implementation has a separate, controlled ablation.  Replacing
the full candidate-pool Schur matrix by the certified $48$-direction subpool
reduces dense-bulk offline time from $.3593$ to $.2697$ s ($24.9\%$) and
coverage workspace from 1.61 MiB to .047 MiB ($97.1\%$).  Total working memory
falls only $6.5\%$ because the present code still materializes the candidate family
and all operator images.  Lazy tree traversal and contiguous cached blocks
would replace that storage by the active subpool; this is the natural
matrix-free implementation implied by the subspace lower bound, but is not
claimed as measured performance here.  Screening also changes the selected
space, increasing error from $.3725$ to $.4050$, so its savings are not a
free accuracy-preserving speedup.

More precisely, with $n$ fine unknowns, $M$ available candidates, screened
block size $s$, and retained dimension $r$, a dense pool stores
$O(nM+M^2)$ numbers for candidate images and its Schur matrix.  Exact flat
screening reduces the certificate workspace to $O(s^2)$ but retains the
$O(nM)$ cache; this is the measured implementation above.  A lazy hierarchical
index reduces active candidate storage to $O(ns+s^2)$ only when every skipped
tree node carries a certified upper bound on all descendant quotient gains.
Without such a bound, tree pruning is heuristic and cannot support the
coverage theorem.  Once the space is selected, hp and SR both have online
projection work and deployment storage $O(nr+r^2)$; the matched dimensions
$110$ and $68$ therefore directly explain the observed online and memory
advantage.

We first examine the noncoercive two-dimensional Helmholtz prototype because
its phase and wave-current diagnostics expose the effect of structured
enrichment most clearly.  The hp and hpc paths, Schur gains, stability gate,
and stopping budget are unchanged; phase, amplitude, gradient, and
wave-current quantities are evaluated only after the spaces have been frozen.
Table~\ref{tab:helmholtz-quality} gives the result for the locked heterogeneous
medium and source.

\begin{table}[H]
\centering
\caption{Posterior Helmholtz wave-quality audit.  The equal-dimension rows use
105 coordinates.  The matched-error hpc row is the first frozen state no
worse in solution error than terminal hp; none of these diagnostics enters
selection or certification.  Larger phase coherence is better.}
\label{tab:helmholtz-quality}
\begin{tabular}{llrrrrrr}
\toprule
comparison & method & dim. & solution & amplitude & phase RMS & coherence & wave current\\
\midrule
equal dimension & hp  & 105 & $.6525$ & $.5477$ & $.7610$ & $.8033$ & $.8454$\\
                & hpc & 105 & $.4010$ & $.3245$ & $.3441$ & $.9519$ & $.5574$\\
matched hp error & hp  & 105 & $.6525$ & $.5477$ & $.7610$ & $.8033$ & $.8454$\\
                 & hpc & 38  & $.6430$ & $.5218$ & $.6906$ & $.8293$ & $.9337$\\
\bottomrule
\end{tabular}
\end{table}

\begin{figure}[H]
\centering
\includegraphics[width=.80\textwidth]{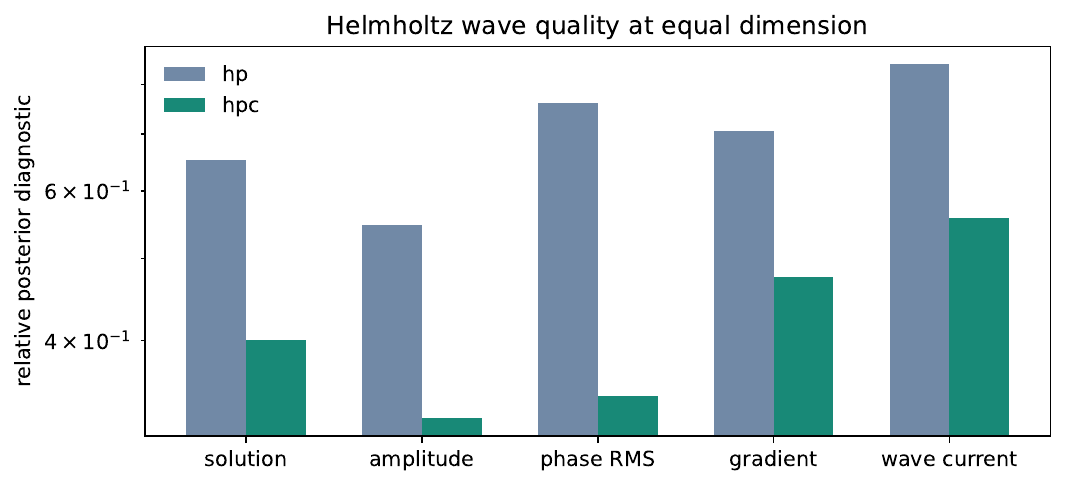}
\caption{Independent wave-quality diagnostics at equal dimension.  Relative
to hp, hpc reduces solution, amplitude, phase, gradient, and wave-current
errors.  At the more aggressive 38-coordinate matched-error point in
Table~\ref{tab:helmholtz-quality}, most quantities remain comparable but the
wave-current error worsens, exposing a physical cost hidden by the global
solution norm.}
\label{fig:helmholtz-quality}
\end{figure}

The equal-dimension result shows that wave-compatible enrichment improves
the oscillatory field rather than only the residual used by the algorithm.
The matched-error result gives the complementary limitation: hpc obtains the
hp solution error with $63.8\%$ fewer coordinates, but such compression is not
uniformly quality preserving.  Posterior PDE diagnostics therefore strengthen
the positive approximation claim while preventing the certificate from being
interpreted as a guarantee for every physical observable.

We next turn to heterogeneous Darcy flow.  Global energy error does not by itself show whether the reduced solution
preserves the transport structures for which the operator-adapted coordinates
were introduced.  We therefore freeze both spaces and draw 64 new right-hand
sides that enter neither calibration nor certification.  Table~\ref{tab:pde-quality}
reports posterior physical diagnostics.  These quantities do not affect any
refinement decision.

\begin{table}[H]
\centering
\caption{Independent PDE-specific audit at matched global accuracy.  Errors
are relative; larger high-flux cosine is better.  ``Worst balance'' is the
$\ell^\infty$ discrete conservation residual normalized by the forcing
$\ell^\infty$ norm.  Means are over 64 fresh loads.}
\label{tab:pde-quality}
\begin{tabular}{lrrr}
\toprule
diagnostic & conforming hp & screened SR & relative SR change (\%)\\
\midrule
energy error & $.3316$ & $.3298$ & $-0.6$\\
all-face flux error & $.3349$ & $.3287$ & $-1.9$\\
top-decile flux error & $.3118$ & $.2785$ & $-10.7$\\
top-decile flux cosine & $.9500$ & $.9592$ & $+1.0$\\
source-response error & $.1140$ & $.1134$ & $-0.6$\\
mean absolute local balance & $2.9778$ & $2.9958$ & $+0.6$\\
worst local balance & $2.6115$ & $1.9775$ & $-24.3$\\
\bottomrule
\end{tabular}
\end{table}

To identify the source of the Darcy gain without appealing to an informal
``structure match,'' we repeat the same posterior audit after removing whole
candidate classes.  The two SR ablations use 67--68 coordinates and the same
Schur rule; only the admissible library changes.

\begin{table}[H]
\centering
\caption{Candidate-class ablation on the 64-load Darcy audit.  The h/p-only
failure shows that Schur screening by itself does not explain the gain.  The
operator-only result shows that coefficient-derived coordinates supply most
of the compression, while the mixed library gives the best global and
dominant-flux errors.}
\label{tab:darcy-mechanism-ablation}
\begin{tabular}{lrrrr}
\toprule
space & dimension & energy error & top-decile flux error & worst balance\\
\midrule
conforming hp & 110 & $.3316$ & $.3118$ & $2.6115$\\
mixed c/h/p SR & 68 & $.3298$ & $.2785$ & $1.9775$\\
SR h/p-only & 67 & $.6008$ & $.5273$ & $4.6411$\\
SR operator-only & 68 & $.3551$ & $.2817$ & $1.4002$\\
\bottomrule
\end{tabular}
\end{table}

The conclusion is specific.  Operator-derived candidates are necessary for
the observed compression in this library, and ordinary h/p directions improve
their global approximation when combined.  However, flux error restricted to
the top conductivity decile is $.3735$ for hp and $.3784$ for mixed SR.
Therefore the evidence does not support the simpler explanation that SR merely
resolves the largest permeability coefficients; it supports preservation of
the solution's dominant flux pathways, as measured independently after the
solve.

The paired 95\% intervals for the top-decile flux error and worst local
balance exclude zero; SR is better on 42 and 43 of 64 loads, respectively.
The corresponding intervals for total flux, source response, and mean balance
include zero.  Thus this experiment supports a specific physical conclusion,
not a universal dominance claim: at the same global accuracy, the smaller
operator-adapted space preserves dominant channel transport and suppresses
extreme local defects more effectively.  Such posterior diagnostics can also
reveal when the variational certificate is misaligned with a quantity of
interest, without making that quantity part of the certificate.

\paragraph{Compatible Maxwell evolution.}
The preceding tests compare frozen approximation spaces.  We therefore add a
deployment test in which the selected spaces themselves evolve a constrained
two-dimensional Maxwell system.  The reference is a sparse compatible
$N_2/P_1^{\rm disc}$ de~Rham discretization advanced by implicit midpoint;
the mature control is the complete low-order $N_1/P_0$ compatible space, and
the p and c rows enrich that same space by 22 and 24 directions, respectively.
Thus the comparison changes the approximation space but holds the discrete
complex, time integrator, training trajectories, and four unseen audit
trajectories fixed.  The c-functions are generated from the prescribed
material and wave operator, not from the audit solutions.

\begin{table}[H]
\centering
\caption{Deployed Maxwell reduced dynamics.  Errors compare complete evolved
trajectories with the compatible full-order solution.  Drift, skew, and Gauss
defects remain at roundoff for every row.  The level-3 full sparse evolution
takes $.191$ s; the present dense reduced evolutions take $.337$ s (p) and
$.330$ s (c), so this is an accuracy and structure-preservation result rather
than a runtime claim.}
\label{tab:maxwell-reduced-dynamics}
\begin{tabular}{llrrrrr}
\toprule
level & space & dimension & trajectory & Poynting & sensor trace & energy history\\
\midrule
2 & low-order & 255  & $.8794$ & $.9137$ & $.6837$ & $.5677$\\
2 & p         & 277  & $.9031$ & $.8905$ & $.6953$ & $.4544$\\
2 & c         & 279  & $.3766$ & $.3108$ & $.3378$ & $3.25\,10^{-4}$\\
3 & low-order & 1023 & $.6112$ & $.4858$ & $.4799$ & $.1447$\\
3 & p         & 1045 & $.6436$ & $.5718$ & $.4760$ & $.1432$\\
3 & c         & 1047 & $.2700$ & $.2718$ & $.1811$ & $4.68\,10^{-5}$\\
\bottomrule
\end{tabular}
\end{table}

This evolved comparison confirms that the earlier projection gain survives
deployment.  At level 3, c reduces the trajectory error by $58.1\%$ relative
to matched p, the Poynting error by $52.5\%$, and the sensor-trace error by
$62.0\%$.  It also reveals a failure that a static audit would hide: the
selected p-block slightly worsens total trajectory error at both levels.
For c, the reduced generator is skew-adjoint to within $7.8\,10^{-16}$,
relative energy drift is below $6.4\,10^{-14}$, and the normalized Gauss
defect is below $3.4\,10^{-14}$.  Combining p and c selects exactly the c-space,
so the mixed candidate family offers no further gain in this test.  Dense
reduced time stepping ceases to be competitive at level 3; sparse or
matrix-free reduced propagation is therefore required before this example
can support an online-speed claim.

A comparison only with polynomial enrichment is not sufficient for a reduced
dynamics claim.  We therefore repeat the audit against the mature
energy-inner-product POD construction.  Eight training trajectories provide
enough snapshots to construct a genuine rank-48 control; all methods then use
the same midpoint evolution and four unseen trajectories.  Table~\ref{tab:maxwell-pod-control}
reports the larger level.

\begin{table}[H]
\centering
\caption{Matched mature-ROM control at Maxwell level 3.  The POD+c and POD+p
spaces contain 24 POD coordinates and 24 accepted innovations.  The POD-48
row has exactly the same deployed dimension.  Time excludes candidate
selection but includes reduced-operator construction and four trajectory
evolutions; the sparse full solve takes $.187$ s.}
\label{tab:maxwell-pod-control}
\begin{tabular}{lrrrrr}
\toprule
space & dimension & trajectory & Poynting & sensor trace & time (s)\\
\midrule
POD-24   & 24 & $.0754$ & $.0253$ & $.0426$ & $.0219$\\
POD-48   & 48 & $\mathbf{.0364}$ & $\mathbf{.0118}$ & $\mathbf{.0200}$ & $.0271$\\
POD-24+p & 48 & $.0757$ & $.0248$ & $.0426$ & $.0285$\\
POD-24+c & 48 & $.0749$ & $.0216$ & $.0456$ & $.0287$\\
\bottomrule
\end{tabular}
\end{table}

This control changes the conclusion.  The compact POD implementation is
$6.9$ times faster than the sparse full evolution at rank 48 and is markedly
more accurate than either enriched rank-48 space.  The c-block modestly
improves the rank-24 POD trajectory and Poynting errors, whereas p does not,
but neither block supplies the missing empirical covariance directions as
efficiently as 24 further POD modes.  All reduced generators remain
skew-adjoint and Gauss compatible to roundoff.  Thus Schur--Riesz acceptance
correctly identifies stable operator innovations, but stability and residual
novelty do not imply dimension-optimal dynamics when a well-sampled POD space
is available.  Maxwell is consequently retained as a structure-preserving
deployment and diagnostic example, not as evidence that c-enrichment
outperforms a mature reduced-order method.  A prospective advantage would
require an extrapolatory, moving-feature, or data-poor regime in which the
training covariance does not already span the relevant dynamics.

Finally, we independently reproduce the definite Maxwell discretization in
MFEM~\cite{anderson2021mfem}, rather than comparing only implementations built
from the same assembly code.  On identical triangular meshes we solve
$\operatorname{curl}\operatorname{curl}E+E=f$ with first- and second-order
N\'ed\'elec elements, homogeneous tangential data, and a prescribed smooth
solution.  Table~\ref{tab:mfem-external-control} gives the two largest meshes;
the complete audit contains four refinements at each order.

\begin{table}[H]
\centering
\caption{External MFEM 4.8 control.  DOFs, manufactured solution, mesh,
N\'ed\'elec order, and $L^2$ norm are matched.  MFEM uses serial PCG with a
Gauss--Seidel smoother; the paper stack uses sparse direct solution.  Times
therefore document the tested implementations rather than an optimized package
ranking.}
\label{tab:mfem-external-control}
\begin{tabular}{rrlrrr}
\toprule
mesh & order & code & DOFs & $L^2$ error & solve (s)\\
\midrule
$32^2$ & 1 & MFEM        & 3,136  & $.0283323765$ & $.0049$\\
       &   & paper stack & 3,136  & $.0283323760$ & $.0032$\\
$32^2$ & 2 & MFEM        & 10,368 & $.0003263851$ & $.1239$\\
       &   & paper stack & 10,368 & $.0003263806$ & $.0151$\\
$64^2$ & 1 & MFEM        & 12,416 & $.0141692785$ & $.0374$\\
       &   & paper stack & 12,416 & $.0141692778$ & $.0148$\\
$64^2$ & 2 & MFEM        & 41,216 & $.0000816116$ & $.9691$\\
       &   & paper stack & 41,216 & $.0000815997$ & $.1130$\\
\bottomrule
\end{tabular}
\end{table}

The two independent codes agree to at least five significant digits in every
reported error and recover the expected first- and second-order convergence.
This rules out a private baseline implementation as the source of the Maxwell
accuracy and constraint results.  We next implement the SR calculation
directly on the MFEM operator and independently in Firedrake/PETSc
\cite{rathgeber2016firedrake}.  In each case the host package supplies the
mesh, N\'ed\'elec spaces, variational assembly, boundary treatment, and native
p controls.  SR receives only the assembled operator, the p1-to-p2 transfer,
and ten operator-frequency functions; its reduced solve uses dense algebra.

\begin{table}[H]
\centering
\caption{Direct external-code SR comparison at approximately matched error.
The native control is p1 on a $64^2$ mesh; SR uses p1 on a $16^2$ mesh plus
two selected frequency functions.  ``Offline'' includes transfer, candidate
projection, quotienting, and scoring.  Break-even is the number of repeated
solves needed to amortize that measured offline cost.}
\label{tab:external-direct-sr}
\begin{tabular}{llrrrrr}
\toprule
backend & method & dimension & $L^2$ error & offline (s) & solve (s) & break-even\\
\midrule
MFEM & native p1 & 12,416 & $.014169$ & --- & $.0374$ & ---\\
     & native p2 & 672 & $.005217$ & --- & $.00153$ & ---\\
     & SR-c       &    738 & $.014737$ & $.300$ & $.00665$ & 10\\
Firedrake & native p1 & 12,416 & $.014170$ & --- & $.0594$ & ---\\
          & native p2 & 672 & $.003047$ & --- & --- & ---\\
          & SR-c       &    738 & $.011254$ & $1.898$ & $.00742$ & 40\\
\bottomrule
\end{tabular}
\end{table}

The result isolates both the gain and its cause.  On the $16^2$ MFEM mesh,
selecting 64 ordinary high-order coordinate functions changes the p1 error
only from $.056615$ to $.056586$; generic SR screening is therefore not the
source of compression.  Two operator-frequency functions reduce it to
$.014737$.  Firedrake independently gives the same qualitative separation,
reducing $.056690$ to $.011254$.  Moreover, the normalized innovation lower
bound remains $.896$ (MFEM) and $.597$ (Firedrake) for two functions, but
falls below $4\,10^{-8}$ when eight nearly duplicate frequencies are retained.
Thus the Riesz gate detects precisely where apparently richer enrichment
becomes redundant.  The comparison is intentionally favorable to
Trefftz-style enrichment because the operator frequency is declared.  It
demonstrates portability, compression, and a useful rejection diagnostic; it
does not imply an advantage for arbitrary Maxwell data.  Parallel MFEM,
Firedrake matrix-free projection, partial assembly, and package-specific
preconditioner tuning remain outside the timing claim.

Figure~\ref{fig:external-maxwell-convergence} supplies the missing mesh study.
The SR-c curve is approximately parallel to p1: the two accepted functions
improve the error constant, not the observed asymptotic exponent.  Native p2
already attains lower error with fewer coordinates on this smooth solution.
Hence no accuracy/mesh crossover against the best polynomial control is
observed; the dimension and repeated-query crossovers in
Table~\ref{tab:external-direct-sr} are explicitly relative to p1.

\begin{figure}[H]
\centering
\includegraphics[width=.68\textwidth]{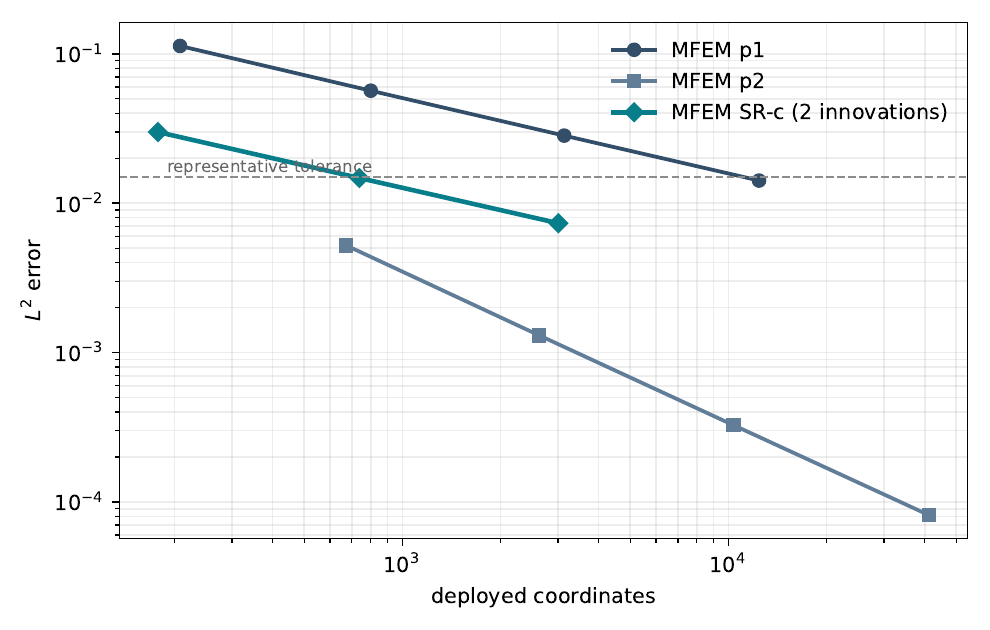}
\caption{External Maxwell convergence.  SR-c shifts the p1 curve downward but
does not change its observed slope; p2 remains superior for this smooth
manufactured solution.  The result is therefore a mechanism and portability
test, not evidence against mature p-refinement.}
\label{fig:external-maxwell-convergence}
\end{figure}

\begin{figure}[H]
\centering
\includegraphics[width=.78\textwidth]{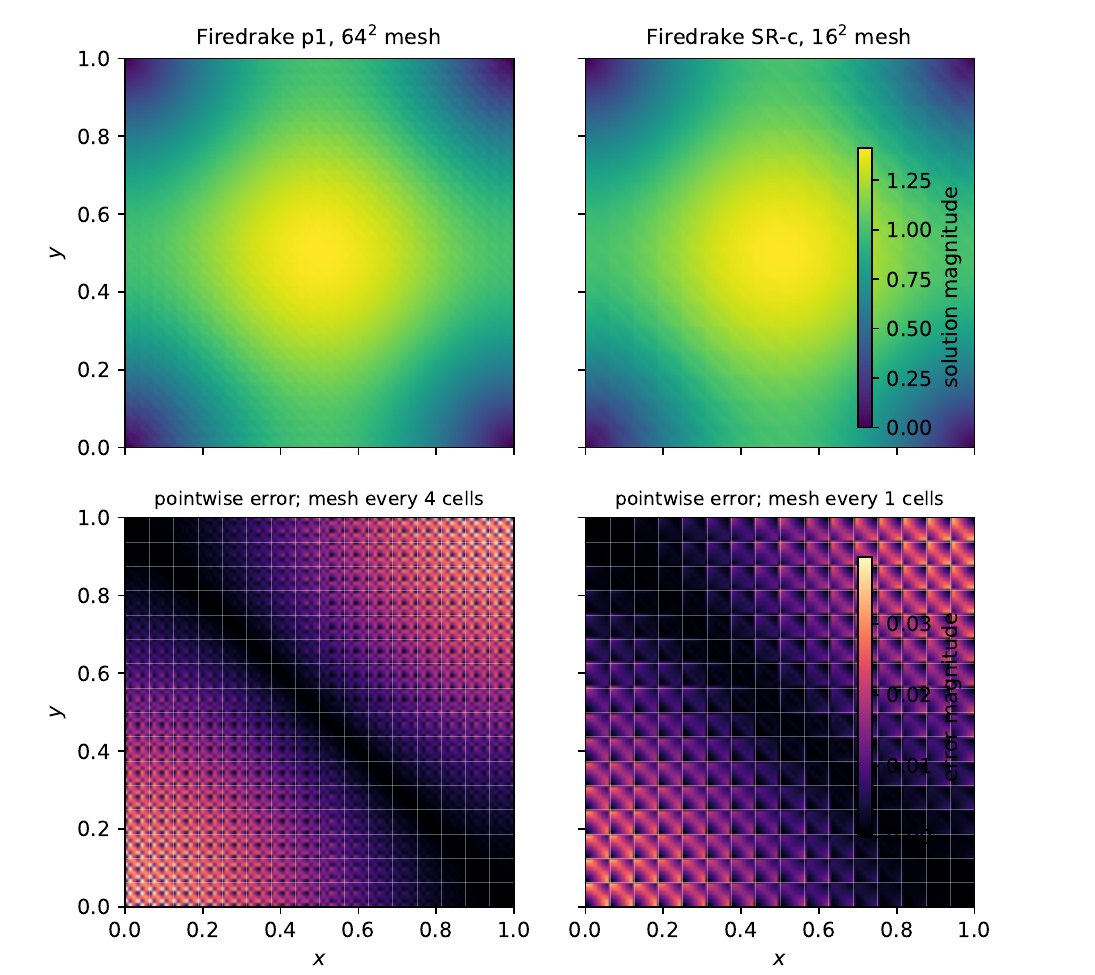}
\caption{Matched Firedrake fields.  Native p1 uses a $64^2$ mesh, whereas SR-c
uses a $16^2$ mesh and two operator-frequency functions.  Common color scales
show comparable field structure and the different pointwise error patterns;
mesh lines are subsampled on the fine-grid panel for visibility.}
\label{fig:external-maxwell-fields}
\end{figure}

The query count in Table~\ref{tab:external-direct-sr} also requires a precise
scope.  Offline work is reusable only while the operator, mesh, boundary
treatment, candidate family, and certified right-hand-side class remain
fixed.  Each new right-hand side must pass a residual-coverage check.  Failure
of that check triggers enrichment or reselection and resets part of the
amortization.  Figure~\ref{fig:external-maxwell-amortization} consequently
shows a conditional deployment calculation, not a universal lookup cost.

\begin{figure}[H]
\centering
\includegraphics[width=.68\textwidth]{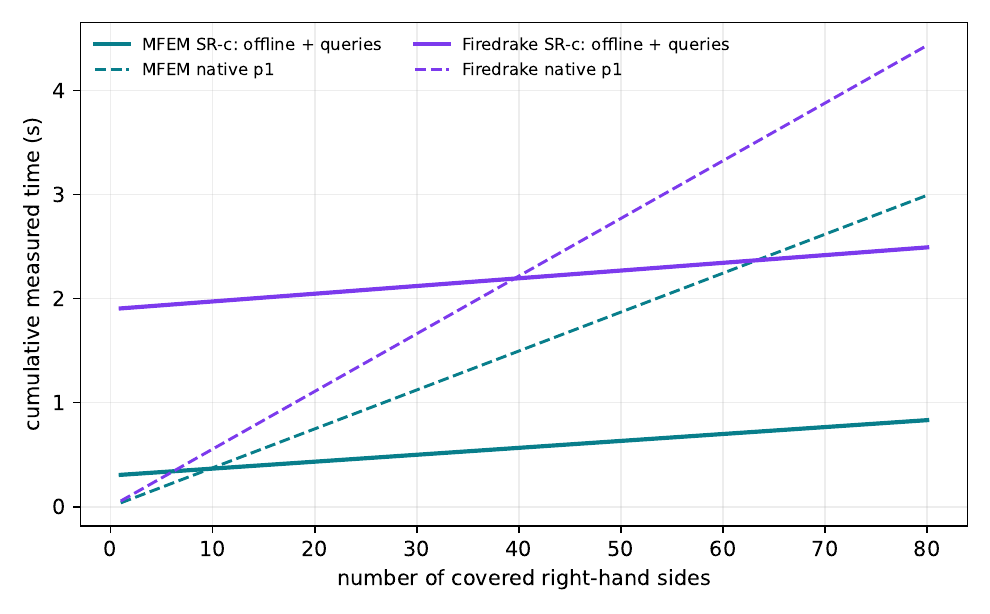}
\caption{Conditional offline--online amortization against the p1 control.
Only right-hand sides covered by the frozen operator-adapted space follow
these lines.  Changed operators or failed coverage tests require new offline
work.  At the tested sizes, one-shot SR remains slower once setup is included.}
\label{fig:external-maxwell-amortization}
\end{figure}

Together, these examples validate the paper's operational claims:
operator structure can generate compact candidates without solution leakage;
quotient--Schur--Riesz quantities distinguish stable novelty from residual
utility and yield sharp bounds; pool coverage and bulk retention predict
finite-step contraction; and the decisive complementary calculation can be
performed sparsely on a two-dimensional operator.

\secondarynumericsA
\begin{table}[h]
\centering
\caption{Evidence hierarchy.  The categories distinguish demonstrated
applications from promising, controlled, secondary, and negative evidence.}
\begin{tabular}{p{.24\textwidth}p{.20\textwidth}p{.42\textwidth}}
\toprule
evidence class & problem & principal finding and limitation\\
\midrule
1. principal demonstrated application & heterogeneous Darcy flow, especially
SPE10 & improvement in all 11 Model 1/2 layer--load cases, up to $41.6\%$ at
equal dimension; $3.4$--$6.8$ times fewer intrinsic coordinates than the first
matched P1/P2 meshes in Model 1; strongest natural result\\
2. leading unresolved application & L-shaped Poisson & certificate $.418$ at
34 enriched DOFs versus $.747$ at 47 hp DOFs; matched-reference audit remains\\
3. transfer evidence & stochastic permeability ensemble & pc improves five of
six fields, median $15.6\%$; one finite-budget greedy failure\\
4. controlled compression & Allen--Cahn, high contrast, and asymptotic layers
& 12 versus 129 interface coordinates, up to $24.3\%$ high-contrast gain, and
near-exact operator-derived outflow layers; not broad natural benchmarks\\
5. supplementary example & blinking-vortex transport & improves all eight
prescribed variants, median $4.5\%$; mature characteristic FEM remains faster\\
6. mixed wave evidence & Helmholtz and frozen transport & local wave enrichment
improves the locked 2D wavefield, whereas other plausible rich families are
rejected or underperform; candidate-family compatibility is not automatic\\
\bottomrule
\end{tabular}
\label{tab:evidence-hierarchy}
\end{table}

\begin{table}[h]
\centering
\caption{Terminal refinement histories for Poisson problems on the L-shaped
domain.  The first four problems use prescribed exact weak solutions with,
respectively, smooth, corner-singular, oscillatory, and recursively structured
behavior; the final row solves $-\Delta u=1$ with homogeneous Dirichlet data.
Cost counts free scalar coordinates and recorded c-factor evaluations.}
\resizebox{\textwidth}{!}{%
\begin{tabular}{llrrrrl}
\toprule
problem & method & cycles & certificate & effectivity & cost & actions (h,p,c)\\
\midrule
smooth & h/hc/hp/hpc & 3 & 0.3094 & 1.0000 & 3 & $(3,0,0)$\\
corner & h & 3 & 0.7052 & 1.0004 & 6 & $(3,0,0)$\\
corner & hp/hpc & 5 & 0.5807 & 1.0011 & 5 & $(1,4,0)$\\
oscillatory & h & 5 & 1.2475 & 1.0000 & 14 & $(5,0,0)$\\
oscillatory & hp & 6 & 1.2364 & 1.0000 & 15 & $(3,3,0)$\\
oscillatory & hpc & 9 & 1.2355 & 1.0000 & 22 & $(3,4,2)$\\
factorized & hc/hpc & 1 & 1.3028 & 1.0002 & 12 & $(0,0,1)$\\
unit-load L-shape & h/hc/hp/hpc & 1 & 0.3017 & --- & 5 & $(1,0,0)$\\
\bottomrule
\end{tabular}
}
\end{table}

The controls prevent a universal-win interpretation.  Smooth and corner
problems reject c.  On the oscillatory problem, c is admissible but does not
improve on hp at the prescribed tolerance.  It is useful only on the explicitly
factorized positive control.  On the standard, non-manufactured problem
$-\Delta u=1$ on the L-shaped domain with homogeneous Dirichlet data, all four
policies choose the same h-action and reject both p and c.  No exact solution
or reference saturation enters that decision.

\subsection{Geometry-informed enrichment at a re-entrant corner}
The re-entrant corner provides a natural opportunity for enrichment because
its leading singular form is determined by the domain geometry.  For the
problem $-\Delta u=1$, we add the classical corner form
$\chi(r)r^{2/3}\sin(2\theta/3)$, where the cut-off $\chi$ has four proposed
radii.  Its exponent, angle, and location are determined entirely by the
domain geometry.  Neither an exact solution nor a fine reference enters
selection.  The ordinary hp refinement rule and the enriched refinement rule use the
same quotient-flux estimator and compare exact variational gains per added
coordinate.

\begin{table}[h]
\centering
\caption{Preliminary matched-cost L-shaped Poisson test.  Certificates are
normalized by their common initial value.  The rows report the closest states
below approximately 35 and 50 free coordinates.}
\resizebox{\textwidth}{!}{%
\begin{tabular}{lrrrr}
\toprule
coordinate cap & hp DOFs & hp certificate & enriched DOFs & enriched certificate\\
\midrule
about 35 & 22 & .689 & 34 & .418\\
about 50 & 47 & .747 & 41 & .538\\
best visited & 8 & .638 & 34 & .418\\
\bottomrule
\end{tabular}
}
\label{tab:lshape-singular-preliminary}
\end{table}

The early result is promising but not yet a principal theorem-validation
claim.  The enriched trajectory reaches a substantially smaller reliable
upper bound with fewer coordinates, whereas later estimator values are not
monotone despite monotone Galerkin energy.  A common fine-reference energy
audit, repeated initial meshes, and a matched wall-time comparison are needed
before attributing a definitive complexity advantage.  This is therefore the
leading unresolved application rather than evidence on the same footing as
SPE10.

\subsection{Transported enrichment for a blinking vortex}
Transport provides a complementary source of economical non-polynomial
structure.  A coherent scalar is
alternately rotated by two localized vortices while diffusion acts at every
step.  The resulting thin, curved material structures are non-polynomial but
have a compact description through the flow.  Compatible and characteristic finite element
methods provide the relevant structure-preserving context
\cite{cotter2023,douglasrussell1982}; our narrower question is whether a
transported function supplies approximation content not already available to
ordinary localization and order enrichment.

We discretize the entire trajectory by a space--time least-squares form.  This
makes the time-dependent operator coercive and permits direct use of the
innovation identity.  An h-candidate is a packet localized on one time slab, a
p-candidate is a higher spatial Fourier mode on one slab, and a c-candidate is
a localized material packet transported coherently through all slabs by the
numerically integrated prescribed velocity.  All policies begin from the same
low-order space and accept twelve directions.  The fine discrete trajectory is
not used to form or rank candidates; it is retained only to audit the predicted
decrease.

\begin{table}[h]
\centering
\caption{Prescribed blinking-vortex robustness study.  ``Horizon'' changes the
number of time slabs; the remaining rows change one parameter from the medium
case.  Errors are relative space--time least-squares energy errors after twelve
accepted directions.  The final column is the relative reduction from hp to
hpc, so positive values favor the augmented space.}
\begin{tabular}{lrrr}
\toprule
case & hp error & hpc error & relative reduction (\%)\\
\midrule
short horizon & 0.6190 & 0.6152 & 0.6\\
medium horizon & 0.6528 & 0.6224 & 4.7\\
long horizon & 0.6543 & 0.6348 & 3.0\\
low diffusion & 0.6546 & 0.6235 & 4.7\\
high diffusion & 0.6488 & 0.6211 & 4.3\\
weak vortex & 0.6588 & 0.6169 & 6.3\\
strong vortex & 0.6481 & 0.6270 & 3.3\\
finer spatial grid & 0.6924 & 0.6394 & 7.7\\
\bottomrule
\end{tabular}
\label{tab:blinking-robustness}
\end{table}

Table~\ref{tab:blinking-robustness} is not a claim of universal dominance: the
gains are deliberately reported even when small.  Their value is that hpc
improves on hp in every prescribed variation while retaining an exact, computable
account of why each direction was accepted.  The median relative reduction is
$4.5\%$.  Pure c also improves on the better of pure h and pure p in all eight
cases, with median relative reduction $5.9\%$.  In the 20-direction medium
run, hpc selects seven transported, twelve order, and one localized direction
and reduces error from $0.6523$ for hp to $0.5870$.  For every accepted block,
the exact-gain identity agrees with the independent space--time energy audit to
numerical precision.  Thus the evidence supports complementarity, not
replacement: the transported enrichment follows coherent history, while p resolves
conventional oscillation and h remains available for spatially localized
residuals.

Equal enrichment count measures representation size but not evaluation work.
We therefore repeat the medium case without changing candidates or PDE data
under three declared budgets.  Table~\ref{tab:cost-audit} separates the
mathematical compression from its present implementation.

\begin{table}[h]
\centering
\caption{Cost audit for the medium blinking-vortex case.  A compact record
stores the parameters defining a function and shares the prescribed flow map;
the direct-support budget instead counts every sampled basis value evaluated by
the present dense numerical implementation.}
\resizebox{\textwidth}{!}{%
\begin{tabular}{llrrr}
\toprule
budget & policy & accepted & selected c & relative energy error\\
\midrule
coefficients & hp  & 12 & 0 & 0.6528\\
coefficients & hpc & 12 & 5 & 0.6224\\
direct support evaluations & hp  & 13 & 0 & 0.6486\\
direct support evaluations & hpc & 13 & 0 & 0.6486\\
compact representation records & hp  & 15 & 0 & 0.6354\\
compact representation records & hpc & 15 & 2 & 0.6133\\
\bottomrule
\end{tabular}
}
\label{tab:cost-audit}
\end{table}

Under equal coefficients, hpc improves on hp by $4.7\%$; under compact record
cost, it improves by $3.5\%$.  Under direct dense evaluation cost, no
c-direction is selected and the two methods coincide exactly.  The conclusion
is therefore specific and actionable: the transported functions supply useful
low-description directions, but a factorized or matrix-free evaluator is
required to turn that representation saving into computational saving.  The
paper does not count an unimplemented fast evaluator as observed speedup.

We subsequently remove the assembled space--time Gram matrix.  The
matrix-free formulation stores the retained functions, their PDE images,
the residual, and shared characteristic geometry; candidate vectors are
generated one at a time from compact descriptors.  On the medium case it
reproduces the hp and hpc errors in Table~\ref{tab:cost-audit} to machine
precision.  It avoids a $5.12$ MB dense Gram matrix, retaining $0.672$ MB of
Galerkin state and $0.0128$ MB of shared characteristic coordinates, and
reduces the indicative hp/hpc selection times from $3.58/4.13$ seconds to
$1.96/2.39$ seconds.  These modest-grid timings validate the operator
formulation rather than an asymptotic claim.  Direct evaluation still visits
the full trajectory of a transported function, which explains why the
support-evaluation policy rejects c and identifies the remaining kernel-level
optimization target.

Finally, the least-squares energy is the norm controlled by the theorem, but
it is not itself a physical observable.  After freezing every selection, we
therefore reconstruct hp and hpc in a separate pass and compare both with an
independent sparse sequential solve of the same discrete PDE.  These values do
not enter candidate construction or ranking.

\begin{table}[h]
\centering
\caption{Independent physical-norm audit after twelve selected directions.
Entries are relative errors; percentages give the reduction from hp to hpc.}
\begin{tabular}{lrrrrrr}
\toprule
grid/time & \multicolumn{2}{c}{space--time $L^2$} &
\multicolumn{2}{c}{terminal $L^2$} & \multicolumn{2}{c}{$H^1$ seminorm}\\
& hp & hpc & hp & hpc & hp & hpc\\
\midrule
$16^2/12$ & .821 & .674 & .889 & .722 & .920 & .815\\
$24^2/12$ & .812 & .642 & .883 & .684 & .924 & .813\\
$32^2/16$ & .825 & .655 & .904 & .736 & .933 & .817\\
\bottomrule
\end{tabular}
\label{tab:physical-audit}
\end{table}

Across the three resolutions, hpc reduces space--time $L^2$ error by
$17.9$--$20.9\%$, terminal $L^2$ error by $18.6$--$22.5\%$, and the spatial
$H^1$ seminorm error by $11.4$--$12.4\%$.  The absolute errors remain large
because both policies are restricted to twelve new directions.  Accordingly,
Table~\ref{tab:physical-audit} establishes transfer of the certified
least-squares improvement to physical discrete norms; it is not a
high-accuracy convergence study or a continuous-PDE error bound.

\subsection{Comparison with characteristic finite elements}
Characteristic finite elements provide the natural established comparison
for this transported structure.  We therefore use a conventional characteristic
Galerkin method on periodic conforming triangular meshes.  Backward
characteristics transport the previous solution, diffusion is treated by a
Galerkin solve, and periodic degrees of freedom are identified exactly.  A
fine quadratic characteristic solve supplies an independent audit trajectory.

\begin{table}[h]
\centering
\caption{Genuine P1/P2 characteristic finite-element baseline for the
12-step blinking-vortex problem.  The errors are measured against a periodic
P2 solve on a $40\times40$ mesh.}
\resizebox{\textwidth}{!}{%
\begin{tabular}{lrrrrr}
\toprule
degree & mesh & periodic DOFs & space--time $L^2$ & terminal $L^2$ & $H^1$ seminorm\\
\midrule
P1 & $8^2$  & 64   & .443 & .495 & .796\\
P1 & $12^2$ & 144  & .237 & .252 & .562\\
P1 & $16^2$ & 256  & .136 & .143 & .384\\
P2 & $8^2$  & 256  & .108 & .101 & .319\\
P2 & $12^2$ & 576  & .039 & .049 & .132\\
P2 & $16^2$ & 1024 & .018 & .025 & .059\\
\bottomrule
\end{tabular}
}
\label{tab:characteristic-fem}
\end{table}

This control is deliberately unfavorable to an overstated conclusion.  The
characteristic method is much more accurate and faster than the present
low-rank space--time prototype.  Its spatial DOF count is not identical to the
global trajectory-coordinate count in Table~\ref{tab:physical-audit}, because
the characteristic solver evolves a complete spatial state at every step.
Even with that qualification, Table~\ref{tab:characteristic-fem} shows that the
current experiments establish reliable low-rank enrichment, not a competitive
replacement for mature transport discretization.  The appropriate next test
is therefore to add residual-selected transported macro-functions to the
characteristic finite-element space itself and ask whether they reduce its
mesh or polynomial requirements.

\subsection{Interface and multiscale compression}
Compact non-polynomial enrichment also arises naturally from interfaces and
heterogeneous coefficients.  We apply the same innovation calculation to two
coercive problems whose useful coordinates arise from different PDE
structure.  The first is a radial Allen--Cahn boundary-value problem, for which
the energy residual proposes localized transition profiles.  The second is a
high-contrast elliptic problem with a procedurally generated coefficient, for
which coefficient-dependent harmonic coordinates generate the c-candidates.
Neither selection rule uses the exact solution.

\begin{table}[h]
\centering
\caption{Non-transport controls.  ``Coordinates'' counts the retained scalar
directions.  The Allen--Cahn column reports the energy error against an
independently resolved solution; the elliptic column reports relative energy
error.  All enrichments are selected by the discrete residual and exact
Schur-innovation gain.}
\resizebox{\textwidth}{!}{%
\begin{tabular}{llrrr}
\toprule
problem & approximation space & contrast & coordinates & error\\
\midrule
Allen--Cahn & P2, 64 radial elements & -- & 129 & $8.19\,10^{-4}$\\
Allen--Cahn & P1, 8 elements + selected profile & -- & 12 & $6.90\,10^{-4}$\\
high-contrast elliptic & physical sine modes & $10^2$ & 20 & .303\\
high-contrast elliptic & physical + harmonic-coordinate modes & $10^2$ & 20 & .262\\
high-contrast elliptic & physical sine modes & $10^4$ & 20 & .812\\
high-contrast elliptic & physical + harmonic-coordinate modes & $10^4$ & 20 & .615\\
constant-coefficient control & either function family & 1 & 20 & .0558\\
mismatched-coordinate control & physical + shifted harmonic modes & $10^4$ & 20 & .879\\
\bottomrule
\end{tabular}
}
\label{tab:nontransport-controls}
\end{table}

The Allen--Cahn enrichment reaches slightly lower error than a globally
quadratic discretization with roughly one tenth as many coordinates.  This is
a compression result, not a claim for a new approximation space: once its
location and scale have been selected, the added hyperbolic-tangent profile is
algebraically the same enrichment used by an optimized partition-of-unity or
extended finite element method.  The contribution here is that its parameters
are proposed from the variational residual and admitted by the same exact gain
identity used for h and p candidates.

For the elliptic problem, adding harmonic-coordinate candidates reduces the
error of physical-coordinate modes by $13.4\%$ at contrast $10^2$ and $24.3\%$
at contrast $10^4$.  The constant-coefficient control collapses to the same
space, as it should, and quotienting removes the duplicate representation.
Conversely, deliberately mismatched coordinates are worse than the ordinary
modes.  Thus the improvement is attributable to PDE-compatible coordinates,
not merely to a larger candidate set.  The coefficient field is procedural rather
than a standard permeability benchmark, and comparisons with GMsFEM and
localized orthogonal decomposition remain necessary before drawing a broader
multiscale conclusion.

\subsection{Operator-compatible enrichment for a convection layer}
Known operator asymptotics provide especially economical functions for a
convection-dominated problem.  We solve
$-\epsilon\Delta u+\partial_xu=0$ with unit inflow, zero outflow, and
homogeneous transverse Neumann conditions.  A fixed lifting $1-x$ imposes the
nonhomogeneous boundary data before any candidate is ranked.  The correction
space contains ordinary trigonometric modes and exponential outflow layers
whose scales are fixed multiples of $\epsilon$; no reference solution is used
to place them.  Exponentials localized at the inflow form the mismatched
control.  The nonsymmetric equation is evaluated through its coercive
least-squares form.

\begin{table}[h]
\centering
\caption{Controlled convection-layer errors after twenty directions.  Values
in parentheses are relative least-squares residuals.}
\begin{tabular}{rrrr}
\toprule
$\epsilon$ & p & pc & misplaced c\\
\midrule
.020 & .846 (.938) & $<5\,10^{-6}$ ($1.1\,10^{-4}$) & .948 (.990)\\
.010 & .915 (.971) & $3.0\,10^{-5}$ ($1.25\,10^{-3}$) & .967 (.996)\\
.005 & .943 (.983) & $1.21\,10^{-3}$ (.0298) & .975 (.998)\\
\bottomrule
\end{tabular}
\label{tab:convection-layer}
\end{table}

The sharp separation between outflow and misplaced profiles confirms the
compatibility principle in a controlled setting: asymptotic information from
the operator can define an economical approximation class, but localization
on the wrong boundary is useless.  The result is intentionally
aligned with the known asymptotic solution class and therefore validates mechanism rather than broad
application performance.  Extending the same comparison to oblique flow,
curved internal layers, and an inf--sup formulation remains open.

\endsecondarynumericsA
\secondarynumericsE

\subsection{Principal application: SPE10 heterogeneous permeability}
SPE10 provides the clearest natural demonstration of coefficient-adapted
enrichment.  Model 1 of the tenth SPE comparative project provides a
$100\times20$ permeability field ranging from $0.001$ to $998.9$ millidarcys
\cite{christieblunt2001}.  We solve three single-phase Darcy problems on this
fixed field: diagonal, offset, and three-well load configurations.  One
harmonic-coordinate function family is computed from permeability alone and reused
unchanged for all three loads.  A cyclically shifted permeability field defines
the mismatched-coordinate control.  Reference pressures are never used for
candidate construction or ranking.

\begin{table}[h]
\centering
\caption{SPE10 Model 1 relative energy errors after twenty accepted
directions.  The last column gives the reduction from p to the combined pc
space.}
\begin{tabular}{lrrrrr}
\toprule
load & p & c & pc & mismatched c & pc reduction (\%)\\
\midrule
diagonal wells & .880 & .514 & .514 & .926 & 41.6\\
offset wells   & .945 & .882 & .882 & .964 & 6.7\\
three wells    & .963 & .936 & .931 & .987 & 3.4\\
\bottomrule
\end{tabular}
\label{tab:spe10}
\end{table}

\begin{figure}[p]
\centering
\includegraphics[width=\textwidth]{figures/spe10_model1_fields.pdf}
\caption{How permeability-adapted enrichment changes the SPE10 approximation.
Read the top row from left to right: the permeability field (a) generates the
operator-adapted functions visualized by their coordinate lines (b); the
reference pressure (c) is used only for the post hoc audit.  The bottom row
compares the p-only approximation (d) with its pointwise error (e) and the pc
error (f).  Panels (e) and (f) use the same color scale, so the darker pc panel
shows the reduction directly.  The lines in (b) describe enrichment functions,
not a deformed finite-element mesh.}
\label{fig:spe10-fields}
\end{figure}

\begin{figure}[p]
\centering
\includegraphics[width=.96\textwidth]{figures/spe10_model1_convergence.pdf}
\caption{SPE10 energy-error histories for three prescribed right-hand sides.  The
permeability-adapted function family is unchanged across panels.  Its benefit is
largest for diagonal wells but remains positive for all three loads, whereas
coordinates from the shifted permeability field are consistently harmful.}
\label{fig:spe10-convergence}
\end{figure}

At equal budgets, pc improves over p for all three loads.  The $41.6\%$
diagonal-well reduction is the strongest natural-PDE result in this paper; the
smaller $6.7\%$ and $3.4\%$ reductions show that its magnitude is load
dependent.  The mismatched control is worse than p in every case.  Together
these observations separate operator compatibility from task utility:
permeability defines potentially useful directions, while the variational
residual determines which of them matter for a particular load.

To show how the approximation spaces differ geometrically, we next compare
the meshes selected by hp and hc on the same SPE10 problem.  The comparison
makes the central distinction visible: hc uses compact coefficient-adapted
functions for extended channel structure, whereas hp must express unresolved
structure through local polynomial resolution.  We
embed conforming P1 and P2 spaces into the same $100\times20$ discrete energy
operator used for pc.  Starting from a common coarse triangulation, local
algebraic residual energy marks cells by D\"orfler bulk marking.  Thus the
reference pressure is used only to report error, not to refine the mesh.

\begin{table}[h]
\centering
\caption{SPE10 Model 1 narrow-band comparison at target relative energy error
$.48$, using D\"orfler fraction $.10$.  Each row is the first discrete space
below the target; unequal terminal errors reflect the remaining discrete
refinement increments.}
\resizebox{\textwidth}{!}{%
\begin{tabular}{lrrrrr}
\toprule
space & error & labelled DOFs & quotient DOFs & triangles & preceding error/DOFs\\
\midrule
pc reference (not target-matched) & .514 & 23 & 23 & fixed coefficient grid & ---\\
hc & .475 & 42 & 42 & 56 & .508/36\\
adaptive P1 & .459 & 72 & 71 & 163 & ---\\
adaptive P2 & .442 & 137 & 130 & 78 & ---\\
\bottomrule
\end{tabular}
}
\label{tab:spe10-matched-mesh}
\end{table}

\begin{figure}[p]
\centering
\includegraphics[width=\textwidth]{figures/spe10_matched_mesh.pdf}
\caption{SPE10 narrow-band approximation-space comparison.  The white and magenta wavy
lines in the left panel are level curves of two permeability-adapted
coordinates $X$ and $Y$; they are neither mesh edges nor error contours.
The left-panel color is $\log_{10}$ permeability.  In the three mesh panels,
color instead denotes the common-scale $\log_{10}$ local residual indicator;
these quantities are identified by separate color bars.
Polynomial modes evaluated in $(X,Y)$ form the c enrichment.  From left to
right: the first panel supplies permeability and c-coordinate geometry; hc
retains the residual-selected c block and then refines h; adaptive P2 and P1
use only piecewise quadratic or linear polynomials.  The attained errors
$.475$, $.442$, and $.459$ lie in a narrow band around the common target $.48$.
Mesh panels are colored by logarithmic local residual energy.  The comparison
concerns solution-space dimension, not removal of the coefficient grid.}
\label{fig:spe10-matched-mesh}
\end{figure}

\begin{figure}[p]
\centering
\includegraphics[width=.76\textwidth]{figures/spe10_matched_convergence.pdf}
\caption{SPE10 Model 1 error convergence measured in one common Darcy energy
norm.  The horizontal coordinate is the numerical rank of the represented
solution space, so exact sampled redundancies in P2 are not counted.  The pc
curve records successive residual-selected permeability-adapted functions;
P1 and P2 record successive residual-marked meshes.  The curves therefore
compare approximation efficiency, not assembly time.}
\label{fig:spe10-matched-convergence}
\end{figure}

The narrow-band meshes make the compression claim more defensible.  At
comparable errors, hc uses 42 intrinsic coordinates, versus 71 for adaptive P1
and 130 for adaptive P2, corresponding to reductions of $41\%$ and $68\%$.
These are representation counts, not end-to-end speedups.  Harmonic-coordinate
construction and fine coefficient integration remain part of hc cost, while
mature sparse FEM assembly benefits the polynomial methods.
The hc mesh concentrates near the well residual while the retained
c block continues to represent the global permeability channels.  This is the
intended division of labor: c carries coherent operator-induced structure and
h resolves the remaining localized error.

Figures~\ref{fig:spe10-matched-mesh} and
\ref{fig:spe10-matched-convergence} also clarify where c- or hc-refinement
helps.  Its clearest advantage occurs when the PDE operator generates a
coherent, non-polynomial feature extending across many cells, as the channel
geometry does here.  One accepted function can then replace many local mesh
coordinates.  Pure h-refinement remains preferable for sharply localized
geometric defects, while p-refinement remains preferable for elementwise
smooth behavior already represented efficiently by polynomials.  The combined
hc method is useful in the intermediate case: h resolves genuinely local
features and c represents operator-induced structure that would otherwise be
repeated across the refined mesh.  This is a conditional advantage rather
than a universal one.  The residual gain and quotient test are essential
because they reject c whenever this compact structure is absent, returning the
method to ordinary h- or hp-refinement.

\subsubsection{Conforming matched-tolerance and scaling study}
The preceding comparison isolates approximation efficiency.  We next impose a
stricter common protocol in which hp and hc use the same conforming fine P1
discretization, coefficient-aligned triangulations, load projection, bulk
parameter $\theta=.55$, stopping tolerance, and refinement budget.  The hp
method may choose h- or p-enrichment; the hc method may choose h-refinement or
a small block of permeability-adapted functions.  Every accepted space
contains the preceding space algebraically.  After each enrichment, an energy
Gram decomposition removes exact redundancy, so the reported dimension is
intrinsic rather than a count of labelled functions.

Two complementary bounds are evaluated.  The Galerkin residual Riesz norm is
the exact error reducible within the common fine space and supplies the
stopping rule.  An equilibrated $RT_2/P_1^{\rm disc}$ flux gives a fully
computable continuum majorant; the fine reference solution is used only to
audit this bound.  The methods therefore do not inspect the unknown pressure
when choosing between h, p, and c.

\begin{table}[h]
\centering
\caption{Conforming SPE10 comparison under one refinement and stopping
protocol.  The target is relative to the initial reducible Galerkin error.
Terminal errors differ because a discrete enrichment can overshoot the common
threshold.  Wall times are indicative single-process measurements including
candidate search; memory is peak traced allocation.}
\resizebox{\textwidth}{!}{%
\begin{tabular}{llrrrrrrrrr}
\toprule
grid & method & target & energy error & majorant & effectivity & dimension & elements & quadrature work & time (s) & memory (MB)\\
\midrule
$50\times10$  & hp & .60 & .421 & .950 & 2.26 & 45  & 117  & 468  & .038 & 2.76\\
               & hc & .60 & .514 & .995 & 1.93 & 39  & 73   & 292  & .051 & 2.09\\
$100\times20$ & hp & .48 & .330 & .778 & 2.36 & 169 & 549  & 2196 & .322 & 42.8\\
               & hc & .48 & .401 & .810 & 2.02 & 131 & 311  & 1244 & .213 & 22.7\\
$150\times30$ & hp & .38 & .284 & .532 & 1.88 & 337 & 1175 & 4700 & 1.706 & 193.4\\
               & hc & .38 & .317 & .551 & 1.74 & 253 & 639  & 2556 & .738 & 98.6\\
\bottomrule
\end{tabular}}
\label{tab:spe10-conforming-scaling}
\end{table}

The coordinate saving grows from $13.3\%$ to $24.9\%$ as the coefficient grid
is enlarged; the corresponding element and quadrature-work savings grow from
$37.6\%$ to $45.6\%$.  Hc is slower on the smallest case, but is respectively
$1.51$ and $2.31$ times faster on the two larger cases and uses approximately
half the peak allocation.  On this rough field hp proposes p-enrichments but
selects h at every accepted cycle.  Hc first accepts coherent c-blocks and
then switches to h, which is precisely the intended separation between global
operator-induced structure and residual localization.

The computational claims can therefore be audited individually rather than
inferred from approximation error alone.  Table~\ref{tab:computational-gain-audit}
collects the strongest controlled evidence.  ``System entries'' is the dense
reduced Galerkin matrix implied by the intrinsic dimension; it does not mean
that the ambient sparse PDE matrix disappears.  Peak allocation is measured,
whereas the reduced-system entry count is algebraic.  Online timing excludes
the separately reported offline construction.

\begin{table}[h]
\centering
\caption{Claim-by-claim computational audit.  Reductions compare the enriched
method with its stated baseline under the corresponding locked protocol.}
\label{tab:computational-gain-audit}
\begin{tabular}{p{.25\textwidth}p{.25\textwidth}rrr}
\toprule
quantity & controlled comparison & baseline & enriched & gain\\
\midrule
intrinsic DOFs & SPE10 $150\times30$, hp/hc & 337 & 253 & $24.9\%$ fewer\\
reduced-system entries & same, $d^2$ & 113,569 & 64,009 & $43.6\%$ fewer\\
peak traced memory (MB) & same, hp/hc & 193.4 & 98.6 & $49.0\%$ less\\
online solve ($\mu$s) & Darcy $63^2$, full/reduced & 83.9 & 4.9 & $17.1\times$ faster\\
refinement cycles & factorized control, h/hc & $>24$ & 1 & $>24\times$ fewer\\
\bottomrule
\end{tabular}
\end{table}

The first three rows are mutually consistent but not interchangeable: the
$24.9\%$ coordinate reduction yields a $43.6\%$ reduction in dense reduced
matrix entries, while measured peak allocation falls by $49.0\%$ because mesh
and candidate workspaces also shrink.  The repeated-solve gain is independently
measured and appears only after offline amortization; at this Darcy resolution
the observed break-even point is 2,173 right-hand sides.  Here one right-hand
side---or query---means one new forcing vector for the same fixed
$3,969\times3,969$ Darcy operator.  The full baseline is treated favorably: it
assembles and factorizes the sparse matrix once, then performs one LU backsolve
per forcing.  The enriched method constructs its 57-dimensional space once,
then projects and reconstructs one reduced solution per forcing.  The measured
costs are
\[
 T_{\rm full}(R)=.004331+83.919\,10^{-6}R,
 \qquad
 T_{\rm red}(R)=.175939+4.945\,10^{-6}R
 \quad\hbox{seconds}.
\]
Equating them gives
$R=(.175939-.004331)/(83.919-4.945)10^{-6}=2172.98$.
The timings use a batch of 200 distinct forcing vectors only to reduce timer
noise; division by 200 reports the per-forcing cost.  Thus 2,173 means 2,173
independent right-hand-side solves with a shared operator, not 2,173 matrix
factorizations.  Finally, the dramatic
cycle reduction is intentionally confined to the recursively factorized
positive control in Table~\ref{tab:factorized-cycle-cap}.  Natural SPE10 shows
fewer elements and less work, but does not yet establish a general theorem or
broad empirical claim of fewer adaptive cycles.

This separates algorithmic complexity from prototype overhead.  If hp needs
$n(\varepsilon)$ stable coordinates to reach tolerance $\varepsilon$ and the
certified enriched space needs $r(\varepsilon)$, then reduced storage changes
from $O(n(\varepsilon)^2)$ to $O(r(\varepsilon)^2)$, dense reduced
factorization from $O(n(\varepsilon)^3)$ to $O(r(\varepsilon)^3)$, and each
reduced backsolve from $O(n(\varepsilon)^2)$ to
$O(r(\varepsilon)^2)$.  Reconstruction costs $O(Nr(\varepsilon))$ for an
ambient dimension $N$.  SR discovery adds an offline candidate-construction
and certification cost.  Hence a slower present implementation is compatible
with lower online work and memory whenever $r(\varepsilon)\ll
n(\varepsilon)$; it is not evidence of an unconditional lower complexity
order.  The recursive-depth experiment supplies precisely such a favorable
approximation regime, while the negative controls show that it need not occur.
Exact caching of the invariant candidate--load cross matrix already reduces
selection time by $2.8\times$ without changing the selected space or its
error, confirming that part of the measured overhead is implementational.

\begin{figure}[p]
\centering
\includegraphics[width=.96\textwidth]{figures/spe10_sinum_theorem_validation.pdf}
\caption{Direct validation of the theoretical quantities over the conforming
adaptive cycles.  The upper-left panel compares actual error with the exact
fine-space residual bound and the reference-independent continuum majorant.
The upper-right panel reports the smallest accepted Schur-innovation
eigenvalue before quotient compression.  The lower-left panel compares the
predicted variational gain with the realized squared-error decrease; the
maximum relative discrepancy is $4.8\times10^{-14}$.  The lower-right panel
shows the observed contraction after each accepted enrichment.}
\label{fig:spe10-theorem-validation}
\end{figure}

\begin{figure}[p]
\centering
\includegraphics[width=.96\textwidth]{figures/spe10_sinum_scaling.pdf}
\caption{SPE10 scaling under the common stopping protocol.  Left: terminal
intrinsic dimensions for hp, hc, and the first ideal energy-minimizing
multiscale space reaching the hc error.  Right: hc quantities divided by hp
quantities; values below one favor hc.  Matrix-free candidate action and
quotient compression turn the growing representation saving into time and
memory savings at the two larger resolutions.}
\label{fig:spe10-scaling}
\end{figure}

The comparison with established multiscale approximation is deliberately
strong.  We compute an ideal energy-minimizing coarse space, the global-corrector
limit underlying localized orthogonal decomposition and constrained
energy-minimizing GMsFEM.  It uses the factored fine operator and global
correctors, and is therefore not a scalable localized implementation; its
construction time must not be compared directly with the adaptive timings in
Table~\ref{tab:spe10-conforming-scaling}.  It instead asks whether the proposed
c-family is already representation-optimal.

\begin{table}[h]
\centering
\caption{Ideal operator-adapted baseline.  ``Preceding'' is the last ideal
coarse space above the terminal hc error; ``first below'' is the first one
below it.  The ideal global space is often more compact, showing both the
strength of established multiscale approximation and the opportunity to admit
localized LOD/GMsFEM blocks through the same variational selection rule.}
\begin{tabular}{lrr|rr|rr}
\toprule
grid & hc dim. & hc error & preceding dim. & error & first-below dim. & error\\
\midrule
$50\times10$  & 39  & .514 & 10  & .758 & 21  & .471\\
$100\times20$ & 131 & .401 & 21  & .539 & 36  & .330\\
$150\times30$ & 253 & .317 & 105 & .442 & 171 & .193\\
\bottomrule
\end{tabular}
\label{tab:spe10-ideal-multiscale}
\end{table}

Thus the new contribution is not a claim that harmonic c-functions dominate
ideal multiscale bases.  It is a common quotient and variational principle that
measures the genuinely new contribution of heterogeneous candidate families,
rejects redundant additions, and retains a reliable error bound.  The ideal
baseline suggests the next constructive step: use localized
energy-minimizing multiscale blocks as c-candidates, while preserving the same
gain identity, quotient test, and stopping rule.

Model 2 tests that conclusion on the substantially more channelized Tarbert
and Upper Ness formations.  We retain the full $60\times220$ horizontal grid
on layers 20, 50, 70, and 85 and use the anisotropic horizontal permeability
components.  Two prescribed well configurations and twelve-direction budgets give
eight cases.  Coordinates are computed independently on each layer from its
permeability; cyclically shifted layer coordinates form the negative control.

\begin{table}[h]
\centering
\caption{SPE10 Model 2 equal-budget results.  Entries give the relative energy
errors of p and pc.  The final column is the percentage reduction in error
from the p-only baseline to pc, $100(E_p-E_{pc})/E_p$; positive values favor
pc.}
\begin{tabular}{llrrr}
\toprule
layer/formation & load & p & pc & reduction from p to pc (\%)\\
\midrule
20/Tarbert & diagonal & .920 & .835 & 9.3\\
20/Tarbert & offset   & .673 & .532 & 21.0\\
50/Upper Ness & diagonal & .998 & .987 & 1.1\\
50/Upper Ness & offset   & .999 & .930 & 6.9\\
70/Upper Ness & diagonal & .966 & .870 & 10.0\\
70/Upper Ness & offset   & .998 & .937 & 6.1\\
85/Upper Ness & diagonal & .999 & .994 & 0.5\\
85/Upper Ness & offset   & .816 & .578 & 29.1\\
\bottomrule
\end{tabular}
\label{tab:spe10-model2}
\end{table}

\begin{figure}[p]
\centering
\includegraphics[width=\textwidth]{figures/spe10_model2_layers.pdf}
\caption{SPE10 Model 2 permeability layers and pc improvement over p at equal
budgets.  The method improves all eight prescribed layer--load cases; the variation
from $0.5\%$ to $29.1\%$ confirms that operator-adapted geometry supplies
possible directions while the load controls their value.}
\label{fig:spe10-model2}
\end{figure}

Model 2 therefore extends rather than merely repeats the Model 1 result: pc
improves all eight cases, with median reduction $8.1\%$ and maximum $29.1\%$.
The mismatched control is worse in every case.  Some absolute errors remain
close to one because twelve global directions cannot resolve localized
channel flow; the claim is comparative approximation at a fixed small budget,
not a high-fidelity reservoir solve.

\subsection{Transfer across stochastic permeability fields}
Stochastic permeability fields show that the benefit extends beyond the
single SPE10 geometry.  We draw six prescribed lognormal permeability fields from
a Gaussian random-field model.  Three samples have contrast $10^2$ and
three have contrast $10^4$.  A fixed injection--production well pair defines
the load.  The c-candidates are sine modes in global harmonic coordinates
computed from the observed permeability; they use neither the reference
pressure nor its residual geometry.  As a negative control, a second set of
coordinates is computed from an independent permeability sample.  Each
policy accepts twenty directions, and a full fine-grid Darcy solve is used
only for the energy-error audit.

\begin{table}[h]
\centering
\caption{Prescribed stochastic Darcy-flow ensemble.  Entries are relative energy
errors after twenty accepted directions.  The final column is the relative
error reduction from p to pc; negative values indicate that greedy pc selection
followed an inferior finite-budget path.}
\begin{tabular}{rrrrrr}
\toprule
contrast & sample & p & c & pc & pc reduction (\%)\\
\midrule
$10^2$ & 11 & .302 & .323 & .259 & 14.3\\
$10^2$ & 23 & .209 & .236 & .174 & 16.8\\
$10^2$ & 37 & .328 & .359 & .318 & 2.8\\
$10^4$ & 11 & .565 & .513 & .455 & 19.5\\
$10^4$ & 23 & .402 & .398 & .320 & 20.5\\
$10^4$ & 37 & .557 & .561 & .566 & $-1.7$\\
\bottomrule
\end{tabular}
\label{tab:stochastic-darcy}
\end{table}

\begin{figure}[p]
\centering
\includegraphics[width=\textwidth]{figures/stochastic_darcy_fields.pdf}
\caption{The strongest prescribed stochastic Darcy sample (contrast $10^4$,
sample 23).  Top: observed permeability with the actual structured
computational grid, coefficient-adapted harmonic coordinate lines, and the
independent reference pressure.  Bottom: the polynomial approximation and
pointwise errors of the p and combined pc spaces after twenty accepted
directions.  The curved coordinate lines are an enrichment geometry, not an
adaptively deformed mesh.  The pc policy reduces relative energy error from
$0.402$ to $0.320$.}
\label{fig:darcy-fields}
\end{figure}

\begin{figure}[p]
\centering
\includegraphics[width=.94\textwidth]{figures/stochastic_darcy_convergence.pdf}
\caption{Relative energy error against accepted-direction count for all six
prescribed stochastic Darcy samples.  Thin lines show individual fields and
thick lines show medians.  The combined space generally improves early and
retains its median advantage, while the crossing in one contrast-$10^4$
sample records the finite-budget greedy failure reported in
Table~\ref{tab:stochastic-darcy}.}
\label{fig:darcy-convergence}
\end{figure}

The combined space improves on p in five of six samples, with median
relative improvement $15.6\%$.  Coordinates from the independent permeability
field are worse than p in all six cases, so the gain is not explained by an
arbitrary nonlinear coordinate change.  Pure c is not consistently superior;
the useful space combines ordinary and coefficient-adapted directions.  The
single negative pc result is also informative.  Exact one-step gain does not
make a finite-horizon greedy sequence globally optimal: an early c-choice can
exclude a better later p-sequence under a hard cardinality cap.  Block
look-ahead or exchange steps are therefore required for a monotone
finite-budget comparison with p.  The ensemble is a natural stochastic Darcy
model, but it remains procedurally generated rather than a field permeability
benchmark.

\subsection{Irregular-domain scaling and the algebraic solver boundary}
The preceding Darcy tests use rectangular coefficient grids.  To test geometry
transfer, we lock the same spectral cutoff, two-patch oversampling rule,
contraction-guided controller, and quotient-flux majorant on an L-shaped domain
and a square with an interior polygonal hole.  Coefficient seeds, well
orientations, and contrasts up to $10^6$ vary without retuning.  The local
spectral functions are obtained by constrained energy minimization over
adjacency-defined patches; neither Cartesian separability nor the fine
reference solution enters construction, selection, or stopping.

Across mesh levels four through six, spectral hc has lower post-hoc energy
error than hp in all 18 paired cases at the common certified target and lower
intrinsic dimension in 13.  Median hc/hp ratios are $0.943$ for energy error,
$0.877$ for intrinsic dimension, and $0.536$ for elements.  The functional
majorant has effectivity range $1.005$--$1.635$ and median $1.043$.

\begin{figure}[t]
\centering
\includegraphics[width=.96\textwidth]{figures/locked_hc_scaling.pdf}
\caption{Locked hp--spectral-hc scaling on heterogeneous L-shaped and
perforated domains.  Marker shape gives the fine-mesh level.  Panels
(a)--(c) show post-hoc energy error against intrinsic dimension, elements,
and total construction-plus-solve time at the common certified target;
panel (d) shows majorant effectivity.  Spectral hc generally reduces error and
representation size, whereas its present construction is slower.}
\label{fig:locked-hc-scaling-submission}
\end{figure}

\endsecondarynumericsE
\secondarynumericsC
This approximation result is distinct from algebraic acceleration.  We use a
symmetric geometric V-cycle as the baseline and wrap it in the balanced
quotient correction defined in \cref{eq:balanced-cem-vcycle}.  The resulting
iteration counts remain bounded as the fine system grows from 705 to 60,736
unknowns:
\begin{table}[h]
\centering
\caption{PCG iterations for the locked irregular-domain multigrid stress test.}
\label{tab:cem-vcycle-submission}
\begin{tabular}{rrrr}
\toprule
level & unknowns & geometric V-cycle & balanced CEM--V-cycle\\
\midrule
4 & 705--872       & 14--15 & 11--12\\
5 & 2,945--3,664   & 15--16 & 13--14\\
6 & 12,033--15,008 & 15--16 & 14\\
7 & 48,641--60,736 & 16--17 & 14--15\\
\bottomrule
\end{tabular}
\end{table}
The result validates mesh-independent convergence and a modest improvement in
the contraction constant, but not a speedup.  At level six, eight independently
assembled loads take approximately $7$--$10$ ms per solve with geometric
multigrid and $12$--$15$ ms with the balanced correction; spectral setup adds
$0.5$--$0.7$ s.  Reuse across right-hand sides therefore has no break-even
point in the present implementation because each corrected solve remains
slower.  Ordinary multigrid is the preferred solver, while spectral functions
are retained for their demonstrated approximation-space value.
\endsecondarynumericsC

\secondarynumericsB
\subsection{A scope test for frozen transport enrichment}
The L-bend problem provides a useful test of when transported enrichment
should be declined.  A conforming diffusive transport solution is frozen at
the final time and used as the target of the stationary coercive projection
\[
 \min_{v\in V}\ \|u_* -v\|_{L^2}^2
       +0.04^2\|\nabla(u_*-v)\|_{L^2}^2.
\]
The transported c-candidate is generated independently by a coarser characteristic map.
On the initial 40-element mesh, h and hc errors are respectively $0.075261$
and $0.075252$; hp and hpc errors are $0.040452$ and $0.040448$.  The squared
error reductions equal the quotient--innovation gains to rounding error, but
the relative error improvements are only $0.012\%$ and $0.008\%$ while the
feature records 81 flow factors.  Hence the static refinement rule should reject
this enrichment block.

This result materially changes the interpretation of the earlier transient
bend experiment.  Its advantage came from characteristic transport as an
evolution operator, not from a compact new direction in the frozen spatial
approximation space.  Transport is therefore natural, but this particular
frozen c-candidate family does not close the natural-application gap.

The same rejection lesson appears in a variable-index Helmholtz test.  An
optical-path coordinate was formed from the refractive index and used to warp
Fourier modes.  Constant index reproduces the ordinary function family exactly,
which the quotient identifies.  For variable index and wave numbers 12, 18,
and 24, however, the combined finite-budget errors are respectively $.991$,
$.979$, and $.975$, compared with $.968$, $.958$, and $.965$ for ordinary
modes.  Thus an intuitively plausible nonlinear coordinate is not sufficient:
the tested phase map neither captures the relevant Green-function structure
nor improves the prescribed successive path.  We retain this negative result because
it demonstrates why the method must evaluate rich functions variationally
rather than presuming that non-polynomial structure is useful.

\begin{table}[h]
\centering
\caption{Strict factorized certificate cap, target relative level $0.65$.
Nonattainment is reported at the prescribed 24-cycle cap.}
\label{tab:factorized-cycle-cap}
\begin{tabular}{lrrrr}
\toprule
method & attained & terminal certificate & cost & wall seconds\\
\midrule
h   & no  & 1.5743 & 33 & 91.2\\
hp  & no  & 1.5119 & 35 & 222.7\\
hc  & yes & 1.3028 & 12 & 0.63\\
hpc & yes & 1.3028 & 12 & 0.75\\
\bottomrule
\end{tabular}
\end{table}

Across the original factorized histories in this table, certificates are monotone and audited effectivity
lies in $[1.0000,1.0011]$.  Observed one-step contraction ratios range from
$0.639$ to values close to one.  The latter show that the abstract uniform
contraction hypothesis is not yet numerically sharp; they do not invalidate
Galerkin monotonicity.
\endsecondarynumericsB

\section{Conclusions and outlook}
This paper develops Schur--Riesz refinement as a general route from
width-theoretic approximation structure to an adaptive numerical method.
Classical h/p blocks, expert-designed Trefftz or multiscale functions, and
automatically generated operator responses enter through the same interface.
Each proposal is viewed through the trial-to-test map, quotiented by the
incumbent operator image, tested for stable innovation, and assigned an exact
variational gain.  The method therefore separates three questions that are
often conflated: whether a direction is new, whether it is stable, and whether
it is useful for the declared problem family.

The principal theoretical bridge is the conditional-width identity: the
Schur spectrum is the Kolmogorov-width spectrum of the available candidate
ellipsoid after incumbent content has been removed.  Operator or randomized
transfer methods make those directions computable.  The safe
automatic-transfer theorem then compares a proposed space with the incumbent
on independent functional data and supplies a population non-regression bound;
rejection recovers the incumbent exactly.  Riesz stability remains a separate
admissibility requirement and prevents a useful projection from being encoded
by unstable coefficients.

For adaptive enlargement, ideal residual capture $\gamma$, stable novelty
$A_C$, and coordinate error $\varepsilon_C$ are kept separate before the
enlarged problem is solved.  Exact gain and a reliable global estimator play
distinct roles in Algorithm~\ref{alg:hpc}: the former selects an action, while
the latter decides when refinement may stop.
The bulk-coverage theorem supplies the global bridge from those local
decisions to convergence: the joint pool must capture a fixed fraction of the
residual, and the selected batch must retain a fixed fraction of that gain.
Both fractions are computable Schur diagnostics, and together they imply
geometric convergence in the induced residual norm without assuming a
separate estimator-reduction inequality.  Corollary~\ref{cor:finite-run-complexity}
turns the observed fractions into a product-form error envelope and explicit
batch, dimension, and work certificates; a failed coverage floor is returned
as a diagnosed limitation of the proposed candidate family.
The conditional-width identity explains the approximation content of each
available block, and the factorized residual majorant supplies a reusable
route from discrete innovation to certified remaining error.  Detailed
covariance, transport-lifting, and physical-norm variants are supporting
extensions rather than additional premises of the algorithm.

The cross-class experiments support this logic without implying that automatic
transfer always wins.  At matched dimension it approximately halves held-out
error for layered Helmholtz and reduces mean energy error by about $31\%$ for
coercive Darcy.  It improves coarse-grid Maxwell field and physical
diagnostics, but loses to the incumbent on the finer Maxwell discretization;
the safe rule rejects all five inferior fine-grid proposals.  A deliberately
redundant proposal is also rejected.  Supporting experiments show that
quotient-based transport discovery improves a hidden flow parameter,
complementary Schur elimination gives nearly unit-effectivity nonnormal bounds,
and bordered inertia evaluates sparse gaps up to 65,536 unknowns without a
dense complement.  A locked 24-cycle audit additionally verifies the
bulk-coverage identity and detects materially weaker candidate coverage in
layered media.
The matched 2D Darcy audit gives the corresponding computational tradeoff.
At the common error level $.405$, screened SR uses 68 coordinates rather than
110 for conforming hp, reducing deployment memory by $38.9\%$ and measured
online work per right-hand side by $30.1\%$.  Exact indexed screening reduces
SR offline time to $.1438$ s, still $28.4\%$ above hp; the extra setup
amortizes after approximately 21,000 right-hand sides.  Against sparse
full-order backsolves, hp and SR amortize after 1,492 and 1,872 right-hand
sides.  Separately, screened rather than dense coverage reduces certificate
workspace by $97.1\%$ and the measured dense-bulk offline calculation by
$24.9\%$.  On 64 independent loads, the smaller SR space also reduces
dominant-channel flux error by $10.7\%$ and the worst local-balance diagnostic
by $24.3\%$, while aggregate flux and source response remain statistically
tied.  In the locked Helmholtz case, equal-dimension hpc also reduces phase
RMS from $.7610$ to $.3441$ and wave-current error from $.8454$ to $.5574$;
at matched solution error it uses 38 rather than 105 coordinates, although
wave-current fidelity then deteriorates.  In compatible Maxwell evolution,
c reduces level-3 trajectory error from $.6436$ for matched p to $.2700$ and
preserves energy, skew-adjointness, and the Gauss constraint to roundoff; the
current dense reduced implementation is nevertheless slower than the sparse
full-order solve at that level.  A compact POD control restores a $6.9$-fold
deployment speedup and gives lower equal-rank error than POD+c, so Maxwell
supports safe structural deployment but not superiority over mature ROM.
Thus the present evidence is strongest for representation, online
complexity, and localized transport fidelity; lazy certified candidate
generation remains necessary to remove the working-memory overhead of storing
the complete candidate family.
The broader appendix shows both application gains and failures;
it supports breadth, not a claim that nonpolynomial enrichment always wins.

The resulting contribution is therefore not c-refinement alone, nor a new
candidate family.  It is a common, auditable decision calculus: quotient
projection tests novelty, the Riesz bound gates stability, and certified
variational gain ranks the admissible refinements.  The principal remaining tasks
are extensions rather than missing premises of the present theory.  On the analytical side, a
goal-oriented extension would augment residual gain by dual-weighted gains for
declared observables such as flux, phase, or far-field response.  Such an
extension must preserve the global stability gate and be validated on data
independent of both primal and adjoint selection; the posterior diagnostics in
this paper do not already provide that theorem.  On the implementation side,
the priorities are matrix-free candidate action, lazy generation, parallel
block evaluation, and a hierarchical index whose skipped nodes carry valid
upper bounds on every descendant quotient gain.  That last condition is
essential: an uncertified tree would reduce cost by changing the algorithm.
Sharper application-specific coverage bounds and tests with changing
operators and geometries remain necessary before making a general efficiency
claim.  The fixed-operator 3D Helmholtz audit establishes
scalability of the independent Petrov mechanism, but a matched comparison with
a mature adaptive 3D hp implementation remains future work.

\appendix
\numberwithin{figure}{section}
\numberwithin{table}{section}
\section{Supporting variants of the main theory}
\label{app:secondary-theory}
The main text retains the results needed to define and certify
Algorithm~\ref{alg:hpc}.  This appendix records useful extensions for
many-query load laws, two-level estimation, transport graph reconstruction,
and physical-norm calibration.  They broaden the framework but do not alter
the current refinement decision.
\storedtheoryA

\section{Proof of the mixed-refinement near-oracle theorem}
\label{app:mixed-near-oracle-proof}

\begin{proof}[Proof of Theorem~\ref{thm:mixed-near-oracle}]
Fix $S=S_m$ and a comparator $O$ with $|O|\leq k$.  If
$\mathcal F(S)\geq\mathcal F(O)$, the one-step estimate below is immediate.
Otherwise, let $b_q$ and $D_{q\mid S}$ be the residual vector and conditional
Schur Gram operator of block $q\notin S$.  The classical projection identity
gives
\[
 G_m(q)=\mathcal F(S\cup\{q\})-\mathcal F(S)
 =\langle b_q,D_{q\mid S}^{\dagger}b_q\rangle
 \geq B_1^{-1}\|b_q\|^2.
\]
Concatenate the blocks in $O\setminus S$.  Their joint conditional Gram
operator has lower bound $A_{2k}$ because
$|S\cup O|\leq2k$.  Hence their joint gain satisfies
\[
 \mathcal F(S\cup O)-\mathcal F(S)
 \leq A_{2k}^{-1}\sum_{q\in O\setminus S}\|b_q\|^2.
\]
Combining the last two displays and using monotonicity gives
\[
 \sum_{q\in O\setminus S}G_m(q)
 \geq\gamma\{\mathcal F(S\cup O)-\mathcal F(S)\}
 \geq\gamma\{\mathcal F(O)-\mathcal F(S)\}.
\]
There are at most $k$ terms, so the largest exact marginal gain is at least
$\gamma\{\mathcal F(O)-\mathcal F(S)\}/k$.

Let $q_m$ maximize the computed marginal and let $q^*$ maximize the exact
marginal.  Proposition~\ref{prop:main-safe-ranking} implies
\[
 G_m(q_m)\geq \widetilde G_m(q_m)-\Xi_m(q_m)
 \geq G_m(q^*)-\Xi_m(q^*)-\Xi_m(q_m)
 \geq \frac{\gamma}{k}
       \{\mathcal F(O)-\mathcal F(S_m)\}-\delta_m.
\]
Thus, with $\Delta_m=\mathcal F(O)-\mathcal F(S_m)$,
$\Delta_{m+1}\leq\rho\Delta_m+\delta_m$.  Iteration from
$\mathcal F(S_0)=0$ yields Eq.~\eqref{eq:mixed-near-oracle}.  Finally,
$\rho^k=(1-\gamma/k)^k\leq e^{-\gamma}$ gives
Eq.~\eqref{eq:mixed-near-oracle-k}.
\end{proof}

\section{Proof of the safe automatic-transfer theorem}
\label{app:safe-transfer-proof}

\begin{proof}[Proof of Theorem~\ref{thm:safe-automatic-transfer}]
The argument has four steps.

\emph{Step 1: convert approximation error into captured energy.}
Orthogonality gives, for every closed finite-dimensional $Z\subset Y$,
\[
 \|g-P_Zg\|_Y^2=\|g\|_Y^2-\|P_Zg\|_Y^2.
\]
After taking expectations,
$\mathcal E(Z_A)-\mathcal E(Z_I)=-\Delta$, where
$\Delta=\mathbb E[J_{Z_A}(g)-J_{Z_I}(g)]$.

\emph{Step 2: bound the marking variable.}
Let $X=J_{Z_A}(g)-J_{Z_I}(g)$.  Orthogonal projections are contractions, so
$0\leq J_Z(g)\leq\|g\|_Y^2\leq M$.  Consequently $-M\leq X\leq M$.

\emph{Step 3: obtain a one-sided population bound.}
Hoeffding's inequality for the independent variables
$X_i=J_{Z_A}(g_i)-J_{Z_I}(g_i)$ yields
\[
 \mathbb P\!\left\{\Delta<\widehat\Delta_n-t\right\}
 \leq \exp\!\left(-\frac{nt^2}{2M^2}\right).
\]
Choosing $t=\tau_{n,\alpha}$ shows that, with probability at least
$1-\alpha$, $\Delta\geq\widehat\Delta_n-\tau_{n,\alpha}$.

\emph{Step 4: apply the decision rule.}
If the automatic space is accepted, the preceding event and the strict
acceptance inequality imply
$\Delta\geq\widehat\Delta_n-\tau_{n,\alpha}>0$.
Step 1 then gives the second inequality in
Eq.~\eqref{eq:safe-transfer-nonregression}, and hence non-regression.  If either
the gain test or the Riesz admissibility test fails, the rule returns $Z_I$
itself, so both its space and its approximant are unchanged.  This proves the
first inequality in all cases.
\end{proof}

\end{document}